\documentclass[10pt]{amsart}

\usepackage[margin=1in]{geometry}
\oddsidemargin=0pt \evensidemargin=0pt
\topmargin=0in
\setlength{\textwidth}{6.5in}

\usepackage{amsmath}
\usepackage{float}
\usepackage{booktabs}
\usepackage[dvipsnames]{xcolor} 

\usepackage{fix-cm}

\usepackage{wasysym}

\usepackage{blkarray}

\usepackage{amscd,amsmath,amssymb,amsfonts,amsthm}
\usepackage{enumerate,mathrsfs,stmaryrd,latexsym, comment, mathtools, mathdots} 

\usepackage[mathscr]{euscript}

\usepackage{caption}
\usepackage{subcaption}
\usepackage{hyperref}

\usepackage[all]{xy}

\usepackage{tikz-cd}
\usepackage{tikz}
\usetikzlibrary{snakes, 
	3d, matrix,decorations.pathreplacing,calc,arrows,decorations.pathmorphing, patterns}
\usetikzlibrary {positioning}
\usetikzlibrary{calc,backgrounds}

\pgfdeclarelayer{background}
\pgfdeclarelayer{foreground}
\pgfsetlayers{background,main,foreground}

\usepackage{tikz-3dplot}


\numberwithin{equation}{section}
\allowdisplaybreaks[1]

\captionsetup[subfigure]{labelformat=simple}

\makeatletter
\newcommand{\leqnomode}{\tagsleft@true\let\veqno\@@leqno}
\newcommand{\reqnomode}{\tagsleft@false\let\veqno\@@eqno}
\makeatother


\DeclareMathOperator{\SL}{SL}
\DeclareMathOperator{\GL}{GL}
\DeclareMathOperator{\Gr}{Gr}

\DeclareMathOperator{\Hom}{Hom}
\DeclareMathOperator{\Sym}{Sym}

\DeclareMathOperator{\SO}{SO}

\DeclareMathOperator{\diag}{diag}
\DeclareMathOperator{\Ric}{Ric}

\newtheorem{theorem}{Theorem}[section]
\newtheorem{lemma}[theorem]{Lemma}
\newtheorem{proposition}[theorem]{Proposition}
\newtheorem{corollary}[theorem]{Corollary}

\newtheorem{Question}[theorem]{Question}

\theoremstyle{definition}
\newtheorem{example}[theorem]{Example}
\newtheorem{definition}[theorem]{Definition}
\newtheorem{remark}[theorem]{Remark}


\begin{document}

\title[K\"ahler--Einstein metrics on  smooth Fano toroidal symmetric varieties of type AIII]{K\"ahler--Einstein metrics on \\ smooth Fano toroidal symmetric varieties of type AIII}	

\author{Kyusik Hong}
\address{Department of Mathematics Education, Jeonju University, Jeonju 55069, Republic of Korea}
\email{kszooj@jj.ac.kr}

\author{DongSeon Hwang}
\address{Center for Complex Geometry, Institute for Basic Science (IBS), Daejeon 34126, Republic of Korea}
\email{dshwang@ibs.re.kr}

\author{Kyeong-Dong Park}
\address{Department of Mathematics and Research Institute of Natural Science/Research Institute of Molecular Alchemy, Gyeongsang National University, Jinju 52828, Republic of Korea}
\email{kdpark@gnu.ac.kr}

\subjclass[2010]{Primary: 14M27, 32Q20, Secondary: 32M12, 32Q26, 53C55} 

\keywords{symmetric variety, wonderful compactification, K\"{a}hler--Einstein metric, K-stability, moment polytope, spherical variety, greatest Ricci lower bound}

\begin{abstract}
The wonderful compactification $X_m$ of a symmetric homogeneous space of type AIII$(2,m)$ for each $m \geq 4$ is Fano, and its blowup $Y_m$ along the unique closed orbit is Fano if $m \geq 5$ and Calabi--Yau if $m = 4$. 
Using a combinatorial criterion for K-polystability of smooth Fano spherical varieties obtained by Delcroix, we prove that $X_m$ admits a K\"ahler--Einstein metric for each $m \geq 4$ and $Y_m$ admits a K\"ahler--Einstein metric if and only if $m = 4,5$. 
\end{abstract}

\maketitle
\setcounter{tocdepth}{1} 
\date{\today}


\section{Introduction}

A K\"{a}hler metric on a complex manifold is said to be \emph{K\"{a}hler--Einstein} if it satisfies the Einstein condition $$\Ric (\omega) = \lambda \, \omega,$$ 
where $\lambda \in \mathbb R$ and $\Ric (\omega)$ denotes the Ricci curvature form of the associated K\"{a}hler form $\omega$.  
While Calabi--Yau manifolds and K\"{a}hler manifolds with ample canonical bundle always admit a K\"{a}hler--Einstein metric by the works of Aubin~\cite{aubin78} and Yau~\cite{yau78}, Fano manifolds do not necessarily admit a K\"{a}hler--Einstein metric in general. 
By the Yau--Tian--Donaldson conjecture solved by Chen--Donaldson--Sun~\cite{cds1, cds2, cds3} and Tian~\cite{tian15}, 
the existence of a K\"{a}hler--Einstein metric on a Fano manifold has turned out to be equivalent to the \emph{K-polystability} which is an algebro-geometric condition introduced by Tian~\cite{tian94, tian97} and Donaldson~\cite{donaldson02}. 

However, it is a difficult problem to determine K-polystability of a given Fano manifold. 
Most known results are concentrated on del Pezzo surfaces and Fano $3$-folds by focusing on each individual manifold or a general member of a finitely many families of Fano manifolds (\cite{TY87}, \cite{TY90}, and \cite{ACCFKMGSSV}) except for the recent breakthrough \cite{AZ} and \cite{Zhuang21}.

Fortunately, more manageable K-polystability criterion have been found for Fano varieties equipped with algebraic group actions. 
For instance, the works of Mabuchi~\cite{mabuchi} together with that of Wang and Zhu~\cite{wz} imply that a toric Fano manifold is K-polystable if and only if the barycenter of the moment polytope is the origin. 
This gave us a powerful combinatorial criterion for the K-polystability of a toric Fano manifold.

This is largely generalized to smooth Fano spherical varieties in terms of its moment polytope and spherical data by Delcroix (\cite{Del20}). 
More precisely, a smooth Fano spherical variety $X$ is K-polystable if and only if the barycenter of the moment polytope of $X$ with respect to the Duistermaat–Heckman measure belongs to certain cone. 
See also \cite{Del22} for some examples including the blow-up of a quadric along a linear subquadric. 
Here, a normal variety is called \emph{spherical} if it admits an action of a reductive algebraic group whose Borel subgroup acts with an open orbit on the variety.

If a Fano spherical manifold is homogeneous, it is K-polystable by \cite[Section 5]{mat2}. 
There are two additional well-known classes of spherical varieties: symmetric varieties and horospherical varieties. 
It is known that, under the assumption that it is a non-homogeneous Fano manifold of Picard number one, the former is K-polystable by \cite{LPY21} and the latter is not since the automorphism group is non-reductive by \cite[Theorem 0.1]{Pasquier}. 
Thus one might be interested in determining K-polystability of non-homogeneous Fano symmetric manifolds of Picard number two or higher. 

Recall that \emph{symmetric varieties} are normal equivariant open embeddings of (algebraic) symmetric homogeneous spaces (see Section \ref{criterion for symmetric varieties} for details). 
From \cite[Theorem~3.1]{dCP83}, we know that a symmetric homogeneous space $G/N_G(G^{\theta})$ admits a maximal smooth equivariant compactification having a unique closed $G$-orbit, which is called the \emph{wonderful compactification} of $G/N_G(G^{\theta})$. 
A symmetric variety without colors is said to be \emph{toroidal}, and a toroidal symmetric variety of $G/N_G(G^{\theta})$ dominates the wonderful compactification of $G/N_G(G^{\theta})$ by \cite[Section~2.3]{dCP83}. 
If a Fano symmetric variety is obtainable from a wonderful compactification $S$ by a sequence of blow-ups along closed orbits, then it is either $S$ or the blow-up of $S$ along the unique closed orbit by \cite[Theorem B]{Ruzzi2012}. 
In summary, one can ask the following question:

\begin{Question}
Is the wonderful compactification of a symmetric homogeneous space K-polystable?  
Which of the blow-ups of wonderful compactifications of symmetric homogeneous spaces along the (unique) closed orbit are K-polystable? 
\end{Question}

In this paper, we answer the question in the case of the wonderful compactifications $X_m$ of symmetric homogeneous spaces of type AIII$(2, m)$ for $m \geq 4$ and their blow-ups $Y_m$ along the closed orbit. 
It is known that $X_m$ is Fano for each $m \geq 4$ and $Y_m$ is Fano if $m \geq 5$ and Calabi--Yau if $m = 4$. 
Recall that $X_m$ is of Picard number two and $Y_m$ is of Picard number three. 
Geometrically, for $m \geq 5$, the wonderful compactification of symmetric homogeneous space 
$$\SL_m(\mathbb C)/S(\GL_2(\mathbb C) \times \GL_{m-2}(\mathbb C))$$ 
of type $\textup{AIII}(2, m)$ can be described as the blow-up of the product $\Gr(2, m) \times \Gr(m-2, m)$ of Grassmannians along the closed orbit $\mathcal{F \ell}(2, m-2; m)$. 
In the case of $m=4$, the symmetric homogeneous space $\SL_4(\mathbb C)/N_{\SL_4(\mathbb C)}(S(\GL_2(\mathbb C) \times \GL_2(\mathbb C)))$ admits the wonderful compactification. 

\begin{theorem}
\label{Main theorem} 
All wonderful compactifications $X_m$ of symmetric homogeneous spaces of type $\textup{AIII}(2, m)$ for $m \geq 4$ are K-polystable.
\end{theorem}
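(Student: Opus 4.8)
The plan is to apply the combinatorial criterion of Delcroix \cite{Del20} for K-stability of smooth Fano spherical varieties, in the form it takes for symmetric varieties: $X_m$ is K-stable precisely when the barycenter of the moment polytope $\Delta_+$ of $-K_{X_m}$, taken with respect to the Duistermaat--Heckman measure, lies in the \emph{relative interior} of the cone $2\rho_P + \Xi$, where $\Xi$ denotes the cone generated by the spherical roots of the open $\SL_m(\mathbb C)$-orbit and $2\rho_P$ is the sum of the positive restricted roots counted with their multiplicities. The first task, then, is to assemble the spherical data of the symmetric homogeneous space underlying $X_m$. Its restricted root system has type $BC_2$ when $m \geq 5$ and type $C_2$ when $m = 4$, with the multiplicities prescribed by type $\mathrm{AIII}(2,m)$ --- long root multiplicity $1$, middle root multiplicity $2$, and short root multiplicity $2(m-4)$. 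From this one reads off the two spherical roots $\sigma_1,\sigma_2$, hence the full-dimensional cone $\Xi = \Cone(\sigma_1,\sigma_2)$ in the rank-$2$ weight space, together with the valuation cone and the vector $2\rho_P$.

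The second task is to make the moment polytope and the Duistermaat--Heckman density explicit. Since $X_m$ is a wonderful compactification, its colored fan is the valuation cone with all of its faces, and its two $\SL_m(\mathbb C)$-stable boundary divisors correspond to the two extremal rays of that cone; combining this with the colors of the open orbit determines $\Delta_+$ as an explicit polygon --- a triangle or quadrilateral --- whose vertices are affine-linear in $m$. The Duistermaat--Heckman density is, up to a positive multiplicative constant,
\[
P_{\mathrm{DH}}(p) \;=\; \prod_{\bar\alpha > 0} \langle \bar\alpha, p \rangle^{\,m_{\bar\alpha}},
\]
the product running over positive restricted roots with their multiplicities; it is a polynomial of degree $\dim X_m - 2 = 4m-10$ which, in suitable linear coordinates $p = (x,y)$, factors as a monomial in the linear forms $x-y$, $x+y$, $x$ and $y$.

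With these ingredients in place, I would compute $\mathrm{bar}_{\mathrm{DH}}(\Delta_+)$ by triangulating $\Delta_+$ and evaluating $\int_{\Delta_+} P_{\mathrm{DH}}$ and $\int_{\Delta_+} p\,P_{\mathrm{DH}}$ with the standard multinomial (Beta-function) integration formula over a simplex; each coordinate of the barycenter then emerges as an explicit rational function of $m$. Writing
\[
\mathrm{bar}_{\mathrm{DH}}(\Delta_+) - 2\rho_P \;=\; t_1\,\sigma_1 + t_2\,\sigma_2 ,
\]
the theorem becomes equivalent to the two strict inequalities $t_1 > 0$ and $t_2 > 0$, to be established for every integer $m \geq 4$ (the case $m = 4$, where the restricted system degenerates to type $C_2$, is covered uniformly since the short-root multiplicity $2(m-4)$ simply vanishes there).

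The step I expect to be the main obstacle is exactly this last verification. After clearing denominators, $t_1$ and $t_2$ are quotients of polynomials in $m$ of fairly high degree, and bringing them to a manifestly positive form --- by extracting the obvious positive factors and estimating the remainder for $m \geq 4$ --- is where the real work lies. The other delicate point is computing $\Delta_+$ correctly in the first place: the contribution of the colors and the effect of the $2\rho_P$-shift must be pinned down with care, since the entire barycenter computation rests on the precise shape and vertices of this polygon.
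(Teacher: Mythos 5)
Your overall strategy is of course the right (and essentially the only) one, and your spherical data are correct: restricted root system $\mathsf{BC}_2$ (degenerating to $\mathsf{C}_2$ at $m=4$) with multiplicities $2(m-4),2,1$, a moment polytope with vertices affine-linear in $m$, and Duistermaat--Heckman density of degree $4m-10$. Two problems, though. First, a slip in the apex: the relevant translation point is $2\rho_\theta=\sum_{\alpha\in\Phi^+\setminus\Phi^\theta}\alpha=(m-1)\alpha_{1,m}+(m-3)\alpha_{2,m-1}$, which is only \emph{half} of ``the sum of the positive restricted roots counted with multiplicities'' (that sum equals $2\cdot 2\rho_\theta$); using the doubled point would shift the cone and change the inequalities to be verified. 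Second, and this is the genuine gap, your proposed verification mechanism fails: since the density is (up to a constant) $x^{2m-7}y^{2m-7}(x+y)^2(x-y)^2$ with exponents depending on $m$, the barycenter is \emph{not} a rational function of $m$. A triangulation plus the Beta-function formula forces you to expand $m$-dependent powers of linear forms, and the integral over the cone on the slanted facet $x+y=2m-3$, namely $\int_{m-3/2}^{m}\{t(2m-3-t)\}^{2m-7}(2m-3-2t)^2\,dt$, is an incomplete-Beta-type quantity whose closed form has a number of terms growing with $m$. So ``clear denominators and check positivity of polynomials of fixed degree in $m$'' is not available, and the plan as written does not yield a uniform proof for all $m\geq 4$.

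What is actually needed (and what the paper does) is to avoid closed-form evaluation altogether: split $\Delta_m$ into the cones $\Omega_1$ over the facet $x=m$ and $\Omega_2$ over the facet $x+y=2m-3$, parametrized by a scaling variable $s\in[0,1]$, and use that the target linear functional is proportional to $s$ on each cone. The $t$-integrals then cancel in the ratio, giving exactly $\bar{x}_1=\frac{m(4m-8)}{4m-7}>m-1$ on $\Omega_1$ and $\bar{x}_2+\bar{y}_2=\frac{(2m-3)(4m-8)}{4m-7}>2m-4$ on $\Omega_2$. The remaining comparisons ($\bar{x}_2>m-1$, and the mixing of $\Omega_1$ and $\Omega_2$ needed for $\bar{x}+\bar{y}>2m-4$) reduce to two integral inequalities, which are proved by the pointwise bound $t(2m-3-t)>m(m-3)$ on $(m-\tfrac32,m)$ together with an explicit degree-9 polynomial checked to be increasing and positive for $m\geq 4$. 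To make your proposal work you would have to replace the ``rational function of $m$'' step by some such combination of exact ratio identities coming from the cone structure and estimates uniform in $m$.
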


For the intersection $Z$ of the two $G$-stable divisors $Y_1$ and $Y_2$ in $X_m$, it turns out that the blow-up $B\ell_Z(X_m)$ of $X_m$ along the closed orbit $Z$ admits a K\"ahler--Einstein metric if and only if $m=4, 5$. 
Note that when $m=4$, $B\ell_Z(X_m)$ is Calabi--Yau, so it admits a K\"ahler--Einstein metric. 

\begin{theorem}
\label{Blowup}
Let $X_m$ be the wonderful compactification of symmetric homogeneous space of type $\textup{AIII}(2, m)$ for $m \geq 5$. 
The blow-up $B\ell_Z(X_m)$ of $X_m$ along the closed orbit $Z$ is K-polystable if $m=5$ and K-unstable if $m \geq 6$.  
\end{theorem}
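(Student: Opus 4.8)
The plan is to apply Delcroix's combinatorial criterion for K-stability of smooth Fano spherical varieties (\cite{Del20}, recalled in the introduction) to $Y_m := B\ell_Z(X_m)$, which for $m \geq 5$ is a smooth Fano spherical embedding of the \emph{same} symmetric homogeneous space $G/H$, $G = \SL_m(\C)$, $H = S(\GL_2(\C)\times\GL_{m-2}(\C))$, as $X_m$. As a consequence every ``intrinsic'' ingredient of the criterion — the rank-two lattice of $B$-weights and its dual, the valuation cone $\mathcal V$ (pointed for $m \geq 5$, equivalently $N_G(H)/H$ is trivial), the colors together with their images in that lattice, the two spherical roots $\sigma_1,\sigma_2$, the vector $2\rho_P$, and the Duistermaat--Heckman density $P_{\mathrm{DH}}$ attached to the restricted root system of type $BC_2$ with its multiplicities (a product of powers of linear forms of total degree $\dim(G/H)-\mathrm{rank}(G/H) = (4m-8)-2 = 4m-10$) — coincides with the one already computed while proving Theorem~\ref{Main theorem}. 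Only the moment polytope changes. Thus the proof splits into three tasks: (i) describe the moment polytope $\Delta_+(Y_m)$ of $-K_{Y_m}$; (ii) compute the $P_{\mathrm{DH}}$-barycenter $\mathrm{bar}_{\mathrm{DH}}(\Delta_+(Y_m))$; (iii) decide, via Delcroix's inequality, whether $2\rho_P - \mathrm{bar}_{\mathrm{DH}}(\Delta_+(Y_m))$ lies in the interior of $\Cone(\sigma_1,\sigma_2)$.

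For (i): the closed orbit $Z = Y_1\cap Y_2$ of the rank-two wonderful variety $X_m$ has codimension $2$, and in the language of colored fans the blow-up $B\ell_Z$ replaces the maximal colored cone $\Cone(v_1,v_2)$ of $X_m$ — with $v_1,v_2$ the primitive valuations of the $G$-stable boundary divisors $Y_1,Y_2$ — by its star subdivision along $v_1+v_2$, the exceptional divisor $E$ being the new $G$-stable prime divisor with valuation $v_1+v_2$. Since the $G$-stable boundary divisors have coefficient $1$ in an anticanonical divisor of any Fano spherical variety, while $\mathrm{codim}\,Z = 2$ forces $\pi^*(-K_{X_m})$ to have coefficient $2$ along $E$ and hence $-K_{Y_m} = \pi^*(-K_{X_m}) - E$ to have coefficient $1$ there, one gets
\[
\Delta_+(Y_m) \;=\; \Delta_+(X_m) \cap \{\, p : \langle p,\, v_1+v_2\rangle \geq -1 \,\},
\]
and the new halfspace slices off precisely the vertex of the polygon $\Delta_+(X_m)$ corresponding to $Z$, turning it into a polygon with one extra edge. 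The hypothesis $m \geq 5$ is exactly what makes this a genuine corner truncation leaving a full-dimensional polytope, i.e.\ $Y_m$ Fano; for $m = 4$ the bounding hyperplane sweeps through the whole polytope, consistent with $Y_4$ being Calabi--Yau.

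For (ii)--(iii): rather than recomputing from scratch, write
\[
\mathrm{bar}_{\mathrm{DH}}(\Delta_+(Y_m)) \;=\; \frac{\int_{\Delta_+(X_m)} p\,P_{\mathrm{DH}}\,dp \;-\; \int_{C_m} p\,P_{\mathrm{DH}}\,dp}{\int_{\Delta_+(X_m)} P_{\mathrm{DH}}\,dp \;-\; \int_{C_m} P_{\mathrm{DH}}\,dp},
\]
where $C_m := \Delta_+(X_m)\setminus\Delta_+(Y_m)$ is the sliced-off corner, a triangle at the $Z$-vertex. The two integrals over $\Delta_+(X_m)$ are supplied by the proof of Theorem~\ref{Main theorem}, and the two over the triangle $C_m$ are evaluated by the standard closed formula for $\int_\Delta \prod_j \ell_j(p)^{c_j}\,dp$ on a simplex (a ratio of products of factorials, i.e.\ iterated Beta integrals). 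All four integrals, hence the two coordinates of $\mathrm{bar}_{\mathrm{DH}}(\Delta_+(Y_m))$ and the coefficients $c_1(m),c_2(m)$ in $2\rho_P - \mathrm{bar}_{\mathrm{DH}}(\Delta_+(Y_m)) = c_1(m)\sigma_1 + c_2(m)\sigma_2$, are explicit rational functions of $m$. By Delcroix's criterion $Y_m$ is K-polystable — indeed K-stable, since $\mathcal V$ is pointed for $m \geq 5$ — exactly when $c_1(m)>0$ and $c_2(m)>0$, and K-unstable as soon as one of them is negative. It then remains to check $c_1(5),c_2(5)>0$ by direct evaluation and to show, say, $c_2(m)<0$ for all $m \geq 6$: most cleanly by establishing a monotonicity in $m$ (geometrically, as $m$ grows the $Z$-corner moves relative to $\Cone(\sigma_1,\sigma_2)$ in the direction that drags the barycenter out of that cone), or by a uniform lower estimate for $-c_2(m)$ valid for $m \geq 6$.

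The main obstacle is exactly this last combinatorial-analytic computation. In contrast to the $X_m$ case, where the moment polytope is comparatively simple, the extra truncating edge complicates both $C_m$ and $\Delta_+(Y_m)$, and — more seriously — the degree $4m-10$ of $P_{\mathrm{DH}}$ grows with $m$, so one is really computing an entire $m$-parametrized family of weighted moment integrals and must control the resulting rational functions of $m$ precisely enough to locate the threshold strictly between $m=5$ and $m=6$ and to confirm it is never re-crossed for $m \geq 7$. Finding coordinates in which the $Z$-corner, the truncation, and the cone $\Cone(\sigma_1,\sigma_2)$ all have clean descriptions, and isolating the right monotone quantity, is what turns the $m \geq 6$ statement from an infinite list of separate checks into a single argument.
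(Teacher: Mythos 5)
Your setup agrees with the paper's: $B\ell_Z(X_m)$ is a Fano embedding of the same symmetric space, the only new datum is the exceptional divisor $E$ with $\hat{\rho}(E)=\hat{\rho}(Y_1)+\hat{\rho}(Y_2)$ and coefficient $1$ in $-K_{B\ell_Z(X_m)}$, and the moment polytope is the corner truncation of $\Delta_m$ by the half-space $\langle \hat{\rho}(E), p-2\rho_\theta\rangle\geq -1$; this is exactly Proposition~\ref{moment polytope_Bl}, and the $m=5$ case by direct evaluation matches Example~\ref{blowup-56}. (Minor slip: Delcroix's condition, as in Proposition~\ref{criterion}, is that $\mathrm{bar}_{DH}-2\rho_\theta$ lies in the relative interior of $\mathcal C^+_\theta$, not that $2\rho_\theta-\mathrm{bar}_{DH}$ does.)

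The genuine gap is in steps (ii)--(iii), which are the heart of the theorem. Your reduction rests on the claim that the four weighted integrals, and hence $c_1(m),c_2(m)$, are ``explicit rational functions of $m$,'' computable via the ratio-of-factorials formula for $\int_\Delta\prod_j\ell_j^{c_j}$ on a simplex. That formula only applies when the linear forms are the barycentric coordinates of the simplex; here the density $4\,x^{2m-7}y^{2m-7}(x+y)^2(x-y)^2$ is a product of four linear forms none of which vanishes on the edges of the cut-off triangle $C_m$, so no such closed form exists, and the exponents grow with $m$. In fact the relevant quantities are not rational in $m$: they carry factors such as $m^{2m-6}(m-3)^{2m-6}$, $(m-4)^{2m-8}$, and incomplete-Beta--type integrals like $\int_{m-3/2}^{m}\{t(2m-3-t)\}^{2m-7}(2m-3-2t)^2\,dt$ that admit no elementary evaluation uniform in $m$. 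Consequently the proposed ``single monotonicity argument'' for all $m\geq 6$ is not substantiated, and this is precisely where the difficulty lies: the paper's Proposition~\ref{barycenter-blowup} proves $\bar x+\bar y<2m-4$ by splitting $\Delta_m^{B\ell}$ into three cones, verifying $6\leq m\leq 41$ by computer, and for $m\geq 42$ replacing the non-closed-form integrals by exponential-type bounds (e.g.\ $\{(m-1)(m-2)\}^{2m-7}<m^{4m-14}e^{-11/2}$) so that everything reduces to the sign of an explicit degree-$9$ polynomial. Without an argument of this kind (or a genuinely uniform estimate you have not supplied), the K-unstability claim for all $m\geq 6$ remains unproved in your proposal.
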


In fact, we show that, in Theorems \ref{Main theorem-eq} and \ref{Blowup-eq}, both $X_m$ for $m \geq 4$ and $B\ell_Z(X_m)$ of $X_m$ for $m=5$ are  $\SL_m(\mathbb C)$-equivariantly uniformly K-stable, not just K-polystable, which implies Theorems \ref{Main theorem} and \ref{Blowup} by Proposition~\ref{criterion}. 
Here, we emphasize that Theorems \ref{Main theorem} and \ref{Blowup} are one of the few cases, including \cite{Del22} and \cite{LL}, that the K-stability or K-unstability of a reasonable class consisting of infinitely many Fano manifolds of different dimensions is verified. 
In practice, the estimate of the barycenter becomes extremely hard when considering \emph{infinite} series of the moment polytope of Fano spherical manifolds.

The paper is organized as follows.
In Section~2, we recall the theory of spherical varieties by focusing on symmetric varieties, and we state a combinatorial criterion for $G$-equivariant K-stability of smooth Fano spherical varieties obtained by Delcroix in terms of algebraic moment polytopes. 
In Section~3, we study the wonderful compactification $X_m$ of symmetric homogeneous space of type $\textup{AIII}(2, m)$ for $m \geq 4$, 
and prove Theorem~\ref{Main theorem} by computing the barycenter of the moment polytope of $X_m$ with respect to the Duistermaat--Heckman measure.  
In Section~4, we prove Theorem~\ref{Blowup} and compute the greatest Ricci lower bound of the blow-up of $X_6$ along the closed orbit.

\vskip 1em

\noindent
\textbf{Acknowledgments}. 
The authors would like to thank JongHae Keum for his interest and encouragement. 
They would like to express their gratitude to the anonymous referees for carefully reading the manuscript and providing excellent suggestions for improvement. 

Kyusik Hong was supported by the National Research Foundation of Korea(NRF) grant funded by the Korea government(MSIT) (NRF-2019R1A2C3010487, NRF-2021R1F1A1059506). 
DongSeon Hwang was supported by the Samsung Science and Technology Foundation under Project SSTF-BA1602-03, the National Research Foundation of Korea(NRF) grant funded by the Korea government(MSIT) (NRF-2021R1A2C1093787), and the Institute for Basic Science(IBS-R032-D1). 
Kyeong-Dong Park was supported by the National Research Foundation of Korea(NRF) grant funded by the Korea government(MSIT) (NRF-2019R1A2C3010487, NRF-2021R1C1C2092610), and by the fund of research promotion program, Gyeongsang National University, 2022. 
He was also supported by Learning \& Academic research institution for Master’s·PhD students, and Postdocs(LAMP) Program of the National Research Foundation of Korea(NRF) grant funded by the Ministry of Education(RS-2023-00301974). 


\section{Spherical varieties and symmetric varieties} 

Let $G$ be a connected reductive algebraic group over $\mathbb C$. 

\subsection{Spherical varieties and algebraic moment polytopes} 
We review general notions and results about spherical varieties. 
We refer \cite{Knop91}, \cite{Timashev11} and \cite{Gandini18} as references for spherical varieties. 

\begin{definition}
\label{spherical variety}
A normal variety $X$ equipped with an action of $G$ is called \emph{spherical} if a Borel subgroup $B$ of $G$ acts on $X$ with an open orbit.
\end{definition}

Let $G/H$ be an open dense $G$-orbit of a spherical variety $X$ and $T$ a maximal torus of $B$. 
By definition, the \emph{spherical weight lattice} $\mathcal M$ of $G/H$ is 
a subgroup of characters $\chi \in \mathfrak X(B) = \mathfrak X(T)$ of (nonzero) $B$-semi-invariant functions in the function field $\mathbb C(G/H) = \mathbb C(X)$, 
that is, $$\mathcal M = \{ \chi \in \mathfrak X(T) : \mathbb C(G/H)^{(B)}_{\chi} \neq 0 \},$$ 
where $\mathbb C(G/H)^{(B)}_{\chi} = \{ f \in \mathbb C(G/H) : b \cdot f = \chi(b) f \text{ for all } b \in B \}$. 
Note that every function $f_{\chi}$ in $\mathbb C(G/H)^{(B)}$ is determined by its weight $\chi$ up to constant 
because $\mathbb C(G/H)^{B} = \mathbb C$, that is, any $B$-invariant rational function on $X$ is constant. 
The spherical weight lattice $\mathcal M$ is a free abelian group of finite rank. 
We define the \emph{rank} of $G/H$ as the rank of the lattice $\mathcal M$. 
Let $\mathcal N = \Hom(\mathcal M, \mathbb Z)$ denote its dual lattice together with the natural pairing $\langle \, \cdot \, , \, \cdot \, \rangle \colon \mathcal N \times \mathcal M \to \mathbb Z$. 

Let $L$ be a $G$-linearized ample line bundle on a spherical $G$-variety $X$. 
By the multiplicity-free property of spherical varieties, 
the algebraic moment polytope $\Delta(X, L)$ encodes the structure of representation of $G$ in the spaces of multi-sections of tensor powers of $L$. 

\begin{definition} 
The \emph{algebraic moment polytope} $\Delta(X, L)$ of $L$ with respect to $B$ is defined as the closure of $\displaystyle \bigcup_{k \in \mathbb N} \Delta_k / k$ in $\mathfrak X(T) \otimes \mathbb R$, 
where $\Delta_k$ is a finite set consisting of (dominant) weights $\lambda$ such that 
\begin{equation*}
H^0(X, L^{\otimes k}) = \bigoplus_{\lambda \in \Delta_k} V_G(\lambda).
\end{equation*} 
Here, $V_G(\lambda)$ means the irreducible representation of $G$ with highest weight $\lambda$. 
\end{definition}

The algebraic moment polytope $\Delta(X, L)$ for a polarized (spherical) $G$-variety $X$ was introduced by Brion in \cite{Brion87} as a purely algebraic version of the Kirwan polytope. 
This is indeed the convex hull of finitely many points in $\mathfrak X(T) \otimes \mathbb R$ (see \cite{Brion87}).

\subsection{Colors of spherical varieties and types of colors}
As the open $B$-orbit of a spherical variety $X$ is an affine variety, 
its complement has pure codimension one and is a finite union of $B$-stable prime divisors. 

\begin{definition}
\label{color}
For a spherical homogeneous space $G/H$, 
$B$-stable prime divisors in $G/H$ are called \emph{colors} of $G/H$. 
We denote by $\mathfrak D = \{ D_1, \cdots, D_k \}$ the set of colors of $G/H$.
\end{definition}

As a $B$-semi-invariant function $f_{\chi}$ in $\mathbb C(G/H)^{(B)}_{\chi}$ is unique up to constant, 
we define the \emph{color map} $\rho \colon \mathfrak D \to \mathcal N$ by $\langle \rho(D), \chi \rangle = \nu_D(f_{\chi})$ for $\chi \in \mathcal M$, 
where $\nu_D$ is the discrete valuation associated to a divisor $D$, that is, $\nu_D(f)$ is the vanishing order of $f$ along $D$. 
Unfortunately, the color map is generally not injective.
In addition, every discrete $\mathbb Q$-valued valuation $\nu$ of the function field $\mathbb C(G/H)$ induces a homomorphism $\hat{\rho}(\nu) \colon \mathcal M \to \mathbb Q$ defined by $\langle \hat{\rho}(\nu), \chi \rangle = \nu(f_{\chi})$, so that we get a map $\hat{\rho} \colon \{ \text{discrete $\mathbb Q$-valued valuations on $G/H$} \} \to \mathcal N \otimes \mathbb Q$. 
Luna and Vust \cite{LV83} showed that the restriction of $\hat{\rho}$ to the set of $G$-invariant discrete valuations on $G/H$ is injective. 
From now on, we will regard a $G$-invariant discrete valuation on $G/H$ as an element of $\mathcal N \otimes \mathbb Q$ via the map $\hat{\rho}$, 
and in order to simplify the notation $\hat{\rho}(\nu_E)$ will be written as $\hat{\rho}(E)$ for a $G$-stable divisor $E$ in $X$. 

Let $\mathcal V$ be the set of $G$-invariant discrete $\mathbb Q$-valued valuations on $G/H$. 
Since the map $\hat{\rho}$ is injective, we may consider $\mathcal V$ as a subset of $\mathcal N \otimes \mathbb Q$ 
and it is known that $\mathcal V$ is a full-dimensional cosimplicial cone, called the \emph{valuation cone} of $G/H$. 
Denote by $\Sigma$ the set of primitive generators in $\mathcal M$ of the extremal rays of the negative of the dual of the valuation cone $\mathcal V$. 

\begin{remark} 
The normal equivariant embeddings of a given spherical homogeneous space are classified by combinatorial objects called \emph{colored fans}, 
which generalize the fans appearing in the classification of toric varieties. 
In a brief way, a colored fan is a finite collection of colored cones, 
which is a pair $(\mathcal C, \mathfrak R)$ consisting of $\mathfrak R \subset \mathfrak D$ and a strictly convex cone $\mathcal C \subset \mathcal N \otimes \mathbb Q$ generated by $\rho(\mathfrak R)$ and finitely many elements in the valuation cone $\mathcal V$ (see \cite{Knop91} for details).  
\end{remark} 

For a simple root $\alpha$, we denote by $P_{\alpha}$ the corresponding minimal parabolic subgroup containing $B$. 
We define the set $\mathfrak D (\alpha)$ of colors moved by the minimal parabolic subgroup $P_{\alpha}$: 
$$\mathfrak D (\alpha) = \{ D_i \in \mathfrak D : P_{\alpha} \cdot D_i \neq D_i \}.$$
Note that every color is moved by at least one minimal parabolic subgroup since the colors are not $G$-stable. 

Now, we recall the types of colors introduced by Brion \cite{Brion97} and Luna \cite{Luna97}. 

\begin{definition} 
\label{types of colors}
Suppose that a color $D_i$ is moved by a minimal parabolic subgroup $P_{\alpha}$, that is, $D_i \in \mathfrak D (\alpha)$. 
\begin{itemize}
	\item If $\alpha \in \Sigma$, we say that $D_i$ is of \emph{type \textsf{a}}.
	\item If $2 \alpha \in \Sigma$, we say that $D_i$ is of \emph{type \textsf{2a}}.
	\item Otherwise, we say that $D_i$ is of \emph{type \textsf{b}}.
\end{itemize}
\end{definition}

\begin{remark} 
The type of a color $D_i$ does not depend on the choice of a simple root $\alpha$ such that $D_i \in \mathfrak D (\alpha)$. 
\end{remark} 

From \cite[Theorem 4.2]{Brion97} and \cite[Section 3.6]{Luna97}, 
we can get an explicit expression of the anticanonical divisor $- K_X$ for a spherical variety $X$. 

\begin{proposition}
\label{expression of anticanonical divisor}
Let $X$ be an embedding of a spherical homogeneous space $G/H$. 
There exists a $B$-semi-invariant global section $s \in \Gamma(X, K_X^{-1})$ of the anticanonical bundle $K_X^{-1}$ such that 
$$\textup{div}(s) = \sum_{i=1}^k m_i D_i + \sum_{j=1}^{\ell} E_j$$ 
for colors $D_i$ and $G$-stable divisors $E_j$ in $X$. 
Denoting by $\xi \in \mathfrak X(B)$ the $B$-weight of the section $s$, 
the positive integers $m_i$ are obtained in terms of $\xi$ and the types of colors, more explicitly, 
 \[
  m_i = 
  \begin{cases}
  1 & \text{ if } D_i \text{ is of type \textsf{a} or \textsf{2a}}, \\
  \langle \alpha^{\vee}, \xi \rangle & \text{ if } D_i \text{ is of type \textsf{b} and } D_i \in \mathfrak D (\alpha). 
  \end{cases}
  \]
\end{proposition}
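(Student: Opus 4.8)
The plan is to produce the section $s$ directly from the spherical structure and then determine its divisor one prime component at a time, reducing every local computation to rank one. Since the open $B$-orbit $\mathcal{O}\subset X$ is affine, its complement in $X$ has pure codimension one and, being $B$-stable, is the union of the colors $D_i$ and the $G$-stable prime divisors $E_j$. The anticanonical bundle $K_X^{-1}$ carries a canonical $G$-linearization, hence a $B$-linearization, so it admits a $B$-semi-invariant rational section; as $\mathbb{C}(X)^B=\mathbb{C}$, such a section is unique up to multiplication by a $B$-semi-invariant rational function $f_\chi$, and $\textup{div}(f_\chi)=\sum_Y\langle\hat\rho(Y),\chi\rangle Y$ with $Y$ ranging over colors and $G$-stable divisors. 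I would fix such a section $s$, with $B$-weight $\xi$; since $b\cdot s=\xi(b)\,s$, its divisor is $B$-stable, so $\textup{div}(s)=\sum_i a_iD_i+\sum_j b_jE_j$ with $a_i,b_j\in\mathbb{Z}$. It then remains to compute, for a suitably normalized $s$, the coefficient along each $E_j$ and each $D_i$ --- a local question at the generic point of the divisor in question --- and to check these are positive, so that $s$ is genuinely a global section.

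For a $G$-stable divisor $E_j$, I would apply the local structure theorem near its generic point: there $X$ is isomorphic, as a $B$-variety, to $R_u(P)\times Z$ with $P\supseteq B$ parabolic and $Z$ an affine toric variety under a quotient torus of $T$, in which $E_j$ becomes a torus-invariant divisor. Since the anticanonical bundle of such a product decomposes as the external product of the trivial ($B$-eigen) anticanonical bundle on $R_u(P)$ with $K_Z^{-1}$, and since the invariant rational section of the anticanonical bundle of a toric variety has a simple zero along each torus-invariant divisor, one concludes --- after matching $s$ with the invariant section along the toric factor --- that $\textup{div}(s)$ has coefficient $1$ along every $G$-stable divisor, exactly as in the toric case, the remaining freedom in the choice of $s$ being absorbed along the colors.

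The essential content, and the main obstacle, is the coefficient $m_i$ along a color $D_i\in\mathfrak{D}(\alpha)$. Here I would apply the local structure theorem relative to the minimal parabolic $P_\alpha$: there is a $P_\alpha$-stable affine open of the form $R_u(P_\alpha)\times S$ in which $D_i$ becomes a prime divisor of a ``slice'' $S$ of rank one, acted on through the semisimple part of a Levi of $P_\alpha$ --- a copy of $\SL_2$ or $\PGL_2$ with simple root $\alpha$. By Luna's classification there are only finitely many such rank-one models: essentially the fibration $S\to P_\alpha/B\cong\mathbb{P}^1$ with $D_i$ a section, together with a few small equivariant compactifications of $\SL_2/T$, $\SL_2/N(T)$, $\PGL_2/T$ inside $\mathbb{P}^2$ or $\mathbb{P}^1\times\mathbb{P}^1$; and the trichotomy $\alpha\in\Sigma$ / $2\alpha\in\Sigma$ / neither is precisely what records which model occurs, according as the $\mathbb{G}_m$-direction carrying $D_i$ is ``unfolded'' once, twice, or not at all. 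In the fibration case ($D_i$ of type \textsf{b}), a $B$-eigensection of the degree-$n$ line bundle on the fibre $\mathbb{P}^1=P_\alpha/B$ vanishes at the $B$-fixed point to an order that one computes to equal $\langle\alpha^\vee,\xi\rangle$, which is therefore the coefficient of $D_i$; in the remaining two cases (types \textsf{a} and \textsf{2a}) the slice is, modulo the toric factor, a low-dimensional toric or $\SL_2$-variety on which a direct computation of the eigen-volume-form gives coefficient $1$. The delicate points are to verify that the $P_\alpha$-slice really is one of the rank-one models, that the defining condition of $\Sigma$ matches the trichotomy of types, that the $R_u(P_\alpha)$-factor contributes no further vanishing along $D_i$, and that $\xi$ is dominant with $\langle\alpha^\vee,\xi\rangle\geq1$ whenever a type-\textsf{b} color is present; granting these, assembling the contributions over all $D_i$ and $E_j$ yields the asserted formula for $\textup{div}(s)$.
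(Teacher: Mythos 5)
The paper does not actually prove this proposition: it is quoted from Brion (Theorem 4.2 of \emph{Curves and divisors in spherical varieties}) and Luna (\emph{Grosses cellules}), so the only fair comparison is with those proofs, and your outline does follow their standard route --- a $B$-semi-invariant section whose divisor is supported off the open $B$-orbit, the local structure theorem with an affine toric slice to get coefficient $1$ along the $G$-stable divisors, and a reduction to Luna's rank-one models along a minimal parabolic $P_\alpha$ to evaluate the coefficient of a color. As a strategy this is the right one.

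However, as written the proposal has two genuine gaps. First, the normalization of $s$ is not pinned down correctly: two $B$-semi-invariant rational sections differ by $f_\chi$ with $\chi\in\mathcal M$, and $\mathrm{div}(f_\chi)=\sum_i\langle\rho(D_i),\chi\rangle D_i+\sum_j\langle\hat\rho(E_j),\chi\rangle E_j$ generally has \emph{nonzero} coefficients along the $G$-stable divisors as well, so the remaining freedom cannot be ``absorbed along the colors''; one must construct a specific canonical section (equivalently, fix the weight $\xi=\sum_{\alpha\in\Phi_{P^u}}\alpha$, which is part of the content of the statement and is used later in the paper) and only then compute all coefficients --- your argument as stated would only show that \emph{some} choice of $s$ has coefficient $1$ along one $E_j$ at a time. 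Second, the heart of the proposition --- that the coefficient is $1$ for types \textsf{a} and \textsf{2a} and $\langle\alpha^\vee,\xi\rangle$ for type \textsf{b}, and that the trichotomy $\alpha\in\Sigma$, $2\alpha\in\Sigma$, neither, matches the rank-one models arising from the $P_\alpha$-slice --- is precisely what you defer as ``delicate points'' to be granted. Those verifications (identification of the slice, the statement that $R_u(P_\alpha)$ contributes no vanishing along $D_i$, and the explicit order-of-vanishing computation on each rank-one model) are the actual content of Brion's and Luna's theorems; without them the proposal is a correct plan rather than a proof.
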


Let $P$ be the stabilizer of the open orbit of $B$ in a spherical $G$-variety $X$. 
Then $P$ is a parabolic subgroup of $G$ containing $B$, and admits a unique Levi subgroup $L$ containing $T$. 
We denote by $\Phi_{P^u}$ the set of roots of the unipotent radical $P^u$ of $P$, which are the roots of $P$ that are not roots of $L$. 
By \cite[Proposition~4.2]{GH15}, we have $\xi = \displaystyle \sum_{\alpha \in \Phi_{P^u}} \alpha = \sum_{\alpha \in \Phi^+ \backslash \Phi_{L}} \alpha$.
Alternatively, $\xi = 2 \rho_G - 2 \rho_L$, where $\rho_L$ is the sum of the fundamental weights of the root system of $L$. 

Based on the works of Brion \cite{Brion89, Brion97}, 
Gagliardi and Hofscheier \cite[Section 9]{GH15} described the (algebraic) moment polytope of the anticanonical line bundle on a Gorenstein Fano spherical variety. 

\begin{proposition}
\label{moment polytope of spherical variety}
Let $X$ be a Gorenstein Fano embedding of a spherical homogeneous space $G/H$. 
If a $B$-stable Weil divisor 
$-K_X = \sum_{i=1}^k m_i D_i + \sum_{j=1}^{\ell} E_j$ 
represents the anticanonical line bundle $K_X^{-1}$ for colors $D_i$ and $G$-stable divisors $E_j$ in $X$, 
the moment polytope $\Delta(X, K_X^{-1})$ is $\xi + Q_X^*$, 
where the polytope $Q_X$ is the convex hull of the set 
\begin{equation*}
\left\{ \frac{\rho(D_i)}{m_i} : i= 1, \cdots, k \right\} \cup \{ \hat{\rho}(E_j) 
: j= 1, \cdots , \ell \}
\end{equation*}  
in $\mathcal N \otimes \mathbb R$ and 
its dual polytope $Q_X^*$ is defined as 
$\{ m \in \mathcal M\otimes \mathbb R : \langle n, m \rangle \geq -1 \text{ for every } n \in Q_X \}$. 
\end{proposition}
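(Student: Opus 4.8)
The plan is to obtain this statement as a specialization of Brion's general description of the algebraic moment polytope of a polarized spherical variety (\cite{Brion89}, \cite{Brion97}; see also \cite{GH15}), followed by a purely formal rewriting of the resulting polytope as a translated dual polytope. So first I would prove the general formula, then substitute the anticanonical divisor from Proposition~\ref{expression of anticanonical divisor}, and finally recognize the answer.

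For the general formula: let $L$ be any $G$-linearized ample line bundle on $X$, fix a $B$-semi-invariant rational section $t_0$ of $L$ of $B$-weight $\lambda_0$, and write $\textup{div}(t_0)=\sum_i a_i D_i + \sum_j b_j E_j$, a $B$-stable divisor supported on the colors $D_i$ and the $G$-stable prime divisors $E_j$. One first notes that the $D_i$ and $E_j$ together form the complete list of $B$-stable prime divisors of $X$: the boundary $X\setminus (G/H)$ is $G$-stable and $G$ is connected, so each of its irreducible components is $G$-stable. Next, since $X$ is spherical the $G$-module $H^0(X,L^{\otimes k})$ is multiplicity-free (two $B$-semi-invariant sections of equal weight have ratio a $B$-invariant rational function, hence a constant), so a dominant weight $\lambda$ occurs in it if and only if $H^0(X,L^{\otimes k})^{(B)}_\lambda\neq 0$; thus $\Delta_k$ is exactly the set of dominant $\lambda$ admitting a nonzero $B$-semi-invariant section of weight $\lambda$. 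Every rational section of $L^{\otimes k}$ has the form $f\, t_0^{\otimes k}$ with $f\in\C(X)=\C(G/H)$ unique, and such a section lies in $H^0(X,L^{\otimes k})^{(B)}_\lambda$ precisely when $f=f_{\lambda-k\lambda_0}$ is $B$-semi-invariant of weight $\lambda-k\lambda_0$ (which already forces $\lambda-k\lambda_0\in\mathcal M$) and $\textup{div}(f_{\lambda-k\lambda_0}) + k\sum_i a_i D_i + k\sum_j b_j E_j \geq 0$. Because $f_{\lambda-k\lambda_0}$ is $B$-semi-invariant, its divisor on $X$ is supported on the $D_i$ and $E_j$, so the last inequality unwinds, via $\nu_{D_i}(f_\chi)=\langle\rho(D_i),\chi\rangle$ and $\nu_{E_j}(f_\chi)=\langle\hat\rho(E_j),\chi\rangle$, to the finite system $\langle\rho(D_i),\tfrac{\lambda}{k}-\lambda_0\rangle\geq -a_i$ and $\langle\hat\rho(E_j),\tfrac{\lambda}{k}-\lambda_0\rangle\geq -b_j$. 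Therefore $\bigcup_k \tfrac1k\Delta_k$ is the set of rational points of $\lambda_0+P_L$ lying in the dominant Weyl chamber, where $P_L=\{v\in\mathcal M\otimes\R : \langle\rho(D_i),v\rangle\geq -a_i,\ \langle\hat\rho(E_j),v\rangle\geq -b_j\}$; since $P_L$ is a rational polytope, taking closures gives $\Delta(X,L)=\lambda_0+P_L$, \emph{provided} $\lambda_0+P_L$ is contained in the dominant chamber.

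Now I would specialize to $L=K_X^{-1}$, taking $t_0=s$ as in Proposition~\ref{expression of anticanonical divisor}, so that $\textup{div}(s)=\sum_i m_i D_i + \sum_j E_j$, $\lambda_0=\xi$, $a_i=m_i>0$, and $b_j=1$. Then the defining inequalities of $P_{K_X^{-1}}$ read $\langle\rho(D_i)/m_i,v\rangle\geq -1$ and $\langle\hat\rho(E_j),v\rangle\geq -1$; since a linear functional on the polytope $Q_X=\textup{conv}\big(\{\rho(D_i)/m_i\}_i\cup\{\hat\rho(E_j)\}_j\big)$ attains its minimum at a vertex, these are together equivalent to $\langle n,v\rangle\geq -1$ for all $n\in Q_X$, that is, $P_{K_X^{-1}}=Q_X^*$. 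Hence $\Delta(X,K_X^{-1})=\xi+Q_X^*$. The main obstacle is the content of the general formula — in particular the verification that $\xi+Q_X^*$ lies in the closed dominant Weyl chamber, which is exactly where the Gorenstein Fano hypothesis enters, through the explicit coefficients $m_i$ of Proposition~\ref{expression of anticanonical divisor} (value $1$ for colors of type \textsf{a} and \textsf{2a}, and the normalizing factor $\langle\alpha^\vee,\xi\rangle$ for those of type \textsf{b}); granting this, everything else is formal bookkeeping with divisors, valuations, and dual polytopes.
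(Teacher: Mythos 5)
Your proposal is essentially correct, but note that the paper does not prove this proposition at all: it is quoted from Gagliardi--Hofscheier \cite[Section 9]{GH15}, which rests on Brion's description of moment polytopes of polarized spherical varieties \cite{Brion87, Brion89, Brion97}. What you wrote is, in substance, the standard proof of that cited result: identify $\Delta_k$ with the set of weights $\lambda$ admitting a nonzero $B$-semi-invariant global section $f_{\lambda-k\lambda_0}\,t_0^{\otimes k}$, use multiplicity-freeness, translate regularity into linear inequalities via $\rho$ and $\hat\rho$ on the complete list of $B$-stable prime divisors, and pass to the closure of the rational points; the specialization $a_i=m_i$, $b_j=1$, $\lambda_0=\xi$ and the rewriting of the inequality system as $\xi+Q_X^*$ then matches exactly how the paper uses the statement. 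So your route differs from the paper only in that you supply the proof the paper delegates to the literature, which is fine.

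There is, however, one deferred step that as written is a gap, and your diagnosis of it is off. You leave unverified the proviso that $\xi+Q_X^*$ (more generally $\lambda_0+P_L$) lies in the dominant chamber, and you assert this is ``exactly where the Gorenstein Fano hypothesis enters, through the explicit coefficients $m_i$.'' In fact the containment is automatic and needs no hypothesis beyond what you already used: for any lattice point $\lambda$ of $k\lambda_0+\mathcal M$ satisfying your inequalities, the section $f_{\lambda-k\lambda_0}\,t_0^{\otimes k}$ is a $B$-eigenvector of the rational $G$-module $H^0(X,L^{\otimes k})$, and $B$-eigenweights of rational $G$-modules are dominant (they are highest weights of the submodules they generate). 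Hence $\Delta_k$ is exactly the set of $\lambda\in k\lambda_0+\mathcal M$ satisfying the inequalities --- dominance is a consequence, not an extra condition to impose --- and therefore $\lambda_0+P_L$, being the closure of such points, lies in the dominant chamber with no appeal to the values of the $m_i$. The Gorenstein Fano hypothesis is used elsewhere: it guarantees that $K_X^{-1}$ is an ample line bundle (Cartier because Gorenstein, ample because Fano), so that $\Delta(X,K_X^{-1})$ is defined and the ample-line-bundle formula applies; the explicit coefficients of Proposition~\ref{expression of anticanonical divisor} enter the present proposition only as the numbers $a_i=m_i$ you substitute. With that one step repaired, your argument is complete.
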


\subsection{Criterion for existence of K\"{a}hler--Einstein metrics on symmetric varieties} 
\label{criterion for symmetric varieties}

For an algebraic group involution $\theta$ of a connected reductive algebraic group $G$, 
let $G^{\theta}=\{ g \in G : \theta(g)=g \}$ be the subgroup consisting of elements fixed by $\theta$. 
If $H$ is a closed subgroup of $G$ such that the identity component of $H$ coincides with the identity component of $G^{\theta}$, 
then the homogeneous space $G/H$ is called a \emph{symmetric homogeneous space}. 
By taking a universal cover of $G$, we can always assume that $G$ is simply connected. 
When $G$ is simply connected, 
$G^{\theta}$ is connected (see \cite[Section~8.1]{Steinberg68}) and 
$H$ is a closed subgroup between $G^{\theta}$ and its normalizer $N_G(G^{\theta})$ in $G$, that is, $G^{\theta} \subset H \subset N_G(G^{\theta})$. 

For an algebraic group involution $\theta$ of $G$, 
a torus $T$ in $G$ is \emph{split} if $\theta(t)=t^{-1}$ for any $t \in T$. 
A torus $T$ is \emph{maximally split} if $T$ is a $\theta$-stable maximal torus in $G$ which contains a split torus $T_s$ of maximal dimension among split tori. 
Then $\theta$ descends to an involution of $\mathfrak X(T)$ for a maximally split torus $T$, 
and the rank of a symmetric homogeneous space $G/H$ is equal to the dimension of a maximal split subtorus $T_s$ of $T$. 

Let $\Phi = \Phi(G, T)$ be the root system of $G$ with respect to a maximally split torus $T$. 
By \cite[Lemma~1.2]{dCP83}, 
we can take a set of positive roots $\Phi^+$ such that either $\theta(\alpha) = \alpha$ or $\theta(\alpha)$ is a negative root for all $\alpha \in \Phi^+$;  
then we denote $2 \rho_{\theta} = \sum_{\alpha \in \Phi^+ \backslash \Phi^{\theta}} \alpha$, where $\Phi^{\theta} = \{ \alpha \in \Phi : \theta(\alpha) = \alpha \}$. 
The set 
$$\Phi_{\theta} = \{ \alpha - \theta(\alpha) : \alpha \in \Phi \backslash \Phi^{\theta} \}$$ 
is a (possibly non-reduced) root system, 
which is called the \emph{restricted root system}.  
Let $\mathcal C^+_{\theta}$ denote the cone generated by positive restricted roots in $\Phi_{\theta}^+ = \{ \alpha - \theta(\alpha) : \alpha \in \Phi^+ \backslash \Phi^{\theta} \}$.

Vust proved that a symmetric homogeneous space $G/H$ is spherical as a consequence of the Iwasawa decomposition (see \cite[Theorem~1 in Section~1.3]{Vust74}). 
The spherical data of $G/H$ can be described in terms of the restricted root system $\Phi_{\theta}$ (see \cite{Vust90} and \cite[Section~26]{Timashev11}). 

\begin{proposition}
\label{spherical data of symmetric space}
For a symmetric homogeneous space $G/H$, 
\begin{itemize}
	\item the spherical weight lattice $\mathcal M$ is the lattice $\mathfrak X(T / T \cap H)$, 
	\item the valuation cone $\mathcal V$ is the negative restricted Weyl chamber determined by $-(\Phi_{\theta}^+)^{\vee}$ in $\mathcal N \otimes \mathbb Q$, and 
	\item the image $\rho(\mathfrak D(G/H))$ under the color map $\rho$ is the set of simple restricted coroots $\bar{\alpha}_i^{\vee}$. 
\end{itemize}
\end{proposition}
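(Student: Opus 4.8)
The plan is to reconstruct these three pieces of spherical data from the local structure of the open $B$-orbit together with the restricted root datum of $\theta$; all of this is classical (Vust; see also \cite[Section~26]{Timashev11}), so I only indicate the mechanism. Fix a maximally split maximal torus $T$ and, by \cite[Lemma~1.2]{dCP83}, a Borel subgroup $B \supseteq T$ with $\theta(\alpha) \in -\Phi^+$ for every $\alpha \in \Phi^+ \setminus \Phi^\theta$. By Vust's algebraic Iwasawa decomposition (\cite[Section~1.3]{Vust74}) the multiplication map $B \times H \to G$ is dominant, so $B$ acts on $G/H$ with a dense open affine orbit $\Omega \cong B/(B \cap H)$. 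For the first bullet I would argue that a function in $\mathbb C(G/H)^{(B)}_\chi$ is determined up to a scalar by its restriction to $\Omega$, and that this restriction is a nonzero $B$-semi-invariant regular function on $B/(B \cap H)$ precisely when $\chi$ is trivial on $B \cap H$; since a character of $B$ kills its unipotent radical $B^u$, this amounts to $\chi$ being trivial on the image of $B \cap H$ in $T = B/B^u$, and that image is $T \cap H$ because $B \cap H$ is solvable with maximal torus $T \cap H$ and unipotent part contained in $B^u$. Hence $\mathcal M = \mathfrak X(T/(T\cap H))$, which has the correct rank because $T = T^\theta T_s$ with $T^\theta = T \cap G^\theta \subseteq T \cap H$.

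For the valuation cone I would combine two facts: the general statement, recalled above, that $\mathcal V$ is a full-dimensional cosimplicial cone, which by a theorem of Brion is moreover a closed fundamental chamber for a finite reflection group acting on $\mathcal N \otimes \mathbb Q$; and the identification of that reflection group, in the symmetric case, with the restricted Weyl group $W(\Phi_\theta)$ acting through the restricted coroots (\cite{Vust90}, \cite[Section~26]{Timashev11}). Together these force $\mathcal V$ to be a closed Weyl chamber of $\Phi_\theta$. To single out the correct one, I would use that $\mathcal V$ contains $\hat\rho(E)$ for every $G$-stable prime divisor $E$ of an equivariant embedding of $G/H$ and that, for the wonderful compactification of $G/G^\theta$, these images lie in the anti-dominant directions of $\Phi_\theta$; this identifies $\mathcal V$ with the negative restricted Weyl chamber determined by $-(\Phi_\theta^+)^\vee$.

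For the third bullet I would first invoke the standard fact that no color of $G/H$ is moved by $P_\alpha$ when $\alpha \in \Phi^\theta$ (morally because the restricted root $\bar\alpha$ vanishes), so that every color is moved by some simple root with nonzero restriction. One then checks that a color of type \textsf{a} moved by $P_\alpha$ has image $\bar\alpha^\vee$ under $\rho$, as does the other color of the pair, while colors of type \textsf{2a} and \textsf{b} moved by $P_\alpha$ also map to $\bar\alpha^\vee$; conversely every simple restricted coroot arises this way, so $\rho(\mathfrak D(G/H)) = \{\bar\alpha_i^\vee\}$. The computation of $\langle \rho(D), \chi \rangle = \nu_D(f_\chi)$ for $\chi \in \mathcal M$ would be carried out by restricting to the rank-one symmetric subspace attached to $\bar\alpha$, where $f_\chi$ becomes, up to an invertible function, a monomial of order $\langle \bar\alpha^\vee, \chi \rangle$ along $D$, an elementary $\SL_2$-level calculation.

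I expect this last step to be the main obstacle: the rank-one reduction must be handled separately according to whether $D$ has type \textsf{a}, \textsf{2a}, or \textsf{b} — these correspond to genuinely different rank-one models — and the delicate point is to verify in every case that the valuation output is exactly the simple restricted coroot, not a proper rational multiple of it, bearing in mind that the color map is in general not injective. Fixing the orientation of $\mathcal V$ in the second step is the other place where one must borrow, rather than rederive, the explicit geometry of the wonderful compactification of $G/G^\theta$.
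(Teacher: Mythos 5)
The paper does not actually prove this proposition: it is stated as classical, with a pointer to Vust's classification of embeddings of symmetric spaces and to Timashev's book (Section~26), which is precisely the theory your sketch reconstructs. Your outline --- the Iwasawa/local structure argument for $\mathcal M = \mathfrak X(T/T\cap H)$, the little Weyl group (equal to the restricted Weyl group) theorem together with the geometry of the wonderful compactification to pin down $\mathcal V$ as the antidominant chamber, and the rank-one reduction by color type for $\rho(\mathfrak D(G/H)) = \{\bar\alpha_i^{\vee}\}$ --- follows the same classical route and defers to the same sources at exactly the points you flag, so it is consistent with (indeed more detailed than) what the paper itself provides.
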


\begin{definition}
A normal $G$-variety $X$ together with an equivariant open embedding $G/H \hookrightarrow X$ 
of a symmetric homogeneous space $G/H$ is called a \emph{symmetric variety}. 
\end{definition}

Combining Propositions~\ref{expression of anticanonical divisor}, \ref{moment polytope of spherical variety}, \ref{spherical data of symmetric space}, 
we can easily compute (algebraic) moment polytopes of the anticanonical line bundles on Fano symmetric varieties. 

\begin{corollary}
\label{moment polytope}
Let $X$ be a Fano embedding of a symmetric homogeneous space $G/H$. 
If all colors in $G/H$ are of type \textsf{b} and 
$\langle \rho(D_i), 2 \rho_{\theta} \rangle = \langle \alpha^{\vee}, 2 \rho_{\theta} \rangle$ for any $D_i \in \mathfrak D (\alpha)$, 
then the moment polytope $\Delta(X_m, K_{X_m}^{-1})$ is the intersection of the positive restricted Weyl chamber and the half-spaces 
$\{ m \in \mathcal M \otimes \mathbb R : \langle \hat{\rho}(E_j), m - 2 \rho_{\theta} \rangle \geq -1 \}$ 
for all $G$-stable divisors $E_j$ in $X$. 
\end{corollary}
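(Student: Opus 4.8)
The plan is to combine the three general propositions quoted just above (Propositions~\ref{expression of anticanonical divisor}, \ref{moment polytope of spherical variety}, and~\ref{spherical data of symmetric space}) with the two hypotheses of the corollary, namely that every color is of type~\textsf{b} and that $\langle \rho(D_i), 2\rho_\theta \rangle = \langle \alpha^\vee, 2\rho_\theta\rangle$ whenever $D_i \in \mathfrak D(\alpha)$. First I would invoke Proposition~\ref{spherical data of symmetric space} to identify the ambient data concretely for a symmetric space: $\mathcal M = \mathfrak X(T/T\cap H)$, the valuation cone $\mathcal V$ is the negative restricted Weyl chamber $-(\Phi_\theta^+)^\vee$, and the images $\rho(D_i)$ of the colors are precisely the simple restricted coroots $\bar\alpha_i^\vee$. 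In particular the cone dual to $\mathcal V$ (up to sign) is the restricted Weyl chamber, and $\Sigma$ — the primitive generators of the extremal rays of $-\mathcal V^\vee$ — consists of the simple restricted roots; the type-\textsf{b} hypothesis then says exactly that no simple restricted root $\bar\alpha_i$ and no $2\bar\alpha_i$ lies in $\Sigma$ in the way that would make $D_i$ of type~\textsf{a} or~\textsf{2a}, which is consistent.

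Next I would pin down the $B$-weight $\xi$ of the anticanonical section from Proposition~\ref{expression of anticanonical divisor}: for a symmetric variety one has $\xi = 2\rho_\theta$ in the notation of the section (this is the standard identification, coming from $\xi = 2\rho_G - 2\rho_L$ and the fact that for symmetric spaces $\Phi^\theta$ plays the role of $\Phi_L$, so $\xi = \sum_{\alpha\in\Phi^+\setminus\Phi^\theta}\alpha = 2\rho_\theta$). Then Proposition~\ref{expression of anticanonical divisor} gives $-K_X = \sum_i m_i D_i + \sum_j E_j$ with $m_i = \langle \alpha^\vee, \xi\rangle = \langle \alpha^\vee, 2\rho_\theta\rangle$ since all $D_i$ are of type~\textsf{b}. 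Now the second hypothesis enters: $\langle \rho(D_i), 2\rho_\theta\rangle = \langle \alpha^\vee, 2\rho_\theta\rangle = m_i$, so the rescaled color point $\rho(D_i)/m_i$ satisfies $\langle \rho(D_i)/m_i,\, 2\rho_\theta\rangle = 1$. Equivalently, writing $\rho(D_i) = \bar\alpha_i^\vee$, the vertex $\bar\alpha_i^\vee/m_i$ of the polytope $Q_X$ from Proposition~\ref{moment polytope of spherical variety} is the point where the ray $\mathbb R_{\geq 0}\bar\alpha_i^\vee$ meets the affine hyperplane $\{\langle \cdot, 2\rho_\theta\rangle = 1\}$.

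I would then dualize. By Proposition~\ref{moment polytope of spherical variety}, $\Delta(X, K_X^{-1}) = \xi + Q_X^* = 2\rho_\theta + Q_X^*$, where $Q_X^* = \{ m : \langle n, m\rangle \geq -1 \ \forall n \in Q_X\}$ and $Q_X = \mathrm{conv}\big(\{\bar\alpha_i^\vee/m_i\} \cup \{\hat\rho(E_j)\}\big)$. Taking the convex hull commutes with intersecting half-spaces under dualization, so $Q_X^*$ is the intersection over all vertices of the half-spaces $\{\langle \bar\alpha_i^\vee/m_i, m\rangle \geq -1\}$ and $\{\langle \hat\rho(E_j), m\rangle \geq -1\}$. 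Translating by $2\rho_\theta$: the color inequality becomes $\langle \bar\alpha_i^\vee/m_i,\, m - 2\rho_\theta\rangle \geq -1$, i.e. $\langle \bar\alpha_i^\vee, m\rangle \geq m_i\langle \bar\alpha_i^\vee, \ldots\rangle$ — and here I use the computation $\langle \bar\alpha_i^\vee/m_i, 2\rho_\theta\rangle = 1$ from the previous paragraph to see this is equivalent to $\langle \bar\alpha_i^\vee, m\rangle \geq 0$, i.e. $m$ lies on the nonnegative side of the wall $\bar\alpha_i^\vee$. Ranging over all simple restricted coroots $\bar\alpha_i^\vee$, these cut out exactly the positive restricted Weyl chamber. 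The divisor inequalities translate directly to $\{\langle \hat\rho(E_j), m - 2\rho_\theta\rangle \geq -1\}$, which is the claimed form. Assembling, $\Delta(X, K_X^{-1})$ is the intersection of the positive restricted Weyl chamber with the half-spaces $\{m : \langle \hat\rho(E_j), m - 2\rho_\theta\rangle \geq -1\}$, as asserted.

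The only genuinely delicate point is the bookkeeping around the identification $\xi = 2\rho_\theta$ and the interaction of the two hypotheses with the type-\textsf{b} formula $m_i = \langle \alpha^\vee, \xi\rangle$: one must be careful that $\langle \alpha^\vee, 2\rho_\theta\rangle$ (a pairing on the full root datum of $G$) really matches $\langle \rho(D_i), 2\rho_\theta\rangle$ (a pairing on the restricted datum) — this is precisely why the hypothesis $\langle \rho(D_i), 2\rho_\theta\rangle = \langle \alpha^\vee, 2\rho_\theta\rangle$ is imposed, and it is what makes the color hyperplanes collapse onto the Weyl chamber walls after the $2\rho_\theta$-translation. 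Everything else is the standard polarity dictionary (convex hull $\leftrightarrow$ intersection of half-spaces, vertex $\leftrightarrow$ facet) applied to Proposition~\ref{moment polytope of spherical variety}. In the paper's intended application $X = X_m$ these hypotheses will be verified by a direct inspection of the restricted root system of type $\textup{AIII}(2,m)$, so the corollary is exactly the bridge from the abstract spherical machinery to the explicit polytope computations of Section~3.
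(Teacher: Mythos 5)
Your proposal is correct and follows essentially the same route as the paper: identify $\xi = 2\rho_\theta$, use the type-\textsf{b} formula $m_i = \langle\alpha^\vee, 2\rho_\theta\rangle$ together with the hypothesis $\langle\rho(D_i),2\rho_\theta\rangle = \langle\alpha^\vee,2\rho_\theta\rangle$ to turn each color half-space of $2\rho_\theta + Q_X^*$ into the wall inequality $\langle\bar\alpha_p^\vee, m\rangle \geq 0$, and keep the $G$-stable divisor half-spaces as stated. The only blemish is the garbled intermediate line ``$\langle\bar\alpha_i^\vee, m\rangle \geq m_i\langle\bar\alpha_i^\vee,\ldots\rangle$'', but the surrounding computation via $\langle\bar\alpha_i^\vee/m_i, 2\rho_\theta\rangle = 1$ is exactly the paper's cancellation, so the argument stands.
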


\begin{proof}
Suppose that a Weil divisor $-K_X = \sum_{i=1}^k m_i D_i + \sum_{j=1}^{\ell} E_j$ represents the anticanonical line bundle $K_X^{-1}$ for colors $D_i$ and $G$-stable divisors $E_j$ in $X$. 
If a color $D_i$ is of type \textsf{b} and $D_i \in \mathfrak D (\alpha)$ for a simple root $\alpha$, 
then we have $m_i = \langle \alpha^{\vee}, \xi \rangle$ by Proposition \ref{expression of anticanonical divisor}.
Note that when $X$ is a symmetric variety associated to an involution $\theta$ of $G$, 
the $B$-weight $\xi$ of a general global section in $\Gamma(X, K_X^{-1})^{(B)}$ is equal to $2 \rho_{\theta} = \sum_{\alpha \in \Phi^+ \backslash \Phi^{\theta}} \alpha$. 

As $\{ \rho(D_1), \cdots , \rho(D_k)\} = \{ \bar{\alpha}_1^{\vee}, \cdots , \bar{\alpha}_r^{\vee} \}$ from Proposition \ref{spherical data of symmetric space}, where $r$ denotes the rank of $\mathcal M$ and $k \geq r$, 
$\rho(D_i) = \bar{\alpha}_p^{\vee}$ gives an inequality 
$$\left\langle \frac{\bar{\alpha}_p^{\vee}}{\langle \alpha^{\vee}, 2 \rho_{\theta} \rangle}, m - 2 \rho_{\theta} \right\rangle \geq -1 
\quad \iff \quad \left\langle\bar{\alpha}_p^{\vee}, m \right\rangle \geq 0 \quad \text{ for all } 1 \leq p \leq r.$$   
Thus, the images of all colors $D_1, \cdots, D_k$ determine the positive restricted Weyl chamber, 
and the result follows from Proposition~\ref{moment polytope of spherical variety}. 
\end{proof}

Note that, for smooth Fano symmetric varieties, the linear part of the valuation cone $\mathcal{V}$ is trivial. 
Thus, Delcroix's criterion for $G$-equivariant K-stability of smooth Fano symmetric varieties in \cite[Corollary~5.9]{Del20} can be slightly strengthened as follows due to the observation in \cite[Proposition 5.10]{Golota20}. 

\begin{proposition}
\label{criterion}
Let $X$ be a smooth Fano embedding of a symmetric homogeneous space $G/H$ associated to an involution $\theta$ of $G$. 
Then the following are equivalent: 
\begin{enumerate}
    \item[\rm (1)] $X$ admits a K\"{a}hler--Einstein metric. 
    \item[\rm (2)] $X$ is $G$-equivariantly uniformly K-stable. 
    \item[\rm (3)] the barycenter of the moment polytope $\Delta(X, K_X^{-1})$ with respect to the Duistermaat--Heckman measure 
$$\prod_{\alpha \in \Phi^+ \backslash \Phi^{\theta}} \kappa(\alpha, p) \, dp$$ 
is in the relative interior of the translated cone $2 \rho_{\theta} + \mathcal C^+_{\theta}$, 
where $\kappa$ denotes the Cartan--Killing form on the Lie algebra $\mathfrak g$ of $G$.
\end{enumerate}
\end{proposition}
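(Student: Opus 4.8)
The plan is to obtain the statement by specializing Delcroix's general combinatorial criterion for K-polystability of smooth Fano spherical varieties to the symmetric case; in fact the assertion is \cite[Corollary~5.9]{Del20}, so the real task is to identify which pieces of spherical data enter and why they take the stated form, rather than to prove anything new.

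First I would reduce, via the Yau--Tian--Donaldson correspondence (\cite{cds1, cds2, cds3, tian15}), the existence of a K\"ahler--Einstein metric on the smooth Fano manifold $X$ to its K-polystability, and then quote Delcroix's theorem from \cite{Del20}. That theorem expresses the Donaldson--Futaki invariant of a $G$-equivariant $\mathbb{R}$-test configuration of $X$ as a linear function --- depending on the configuration --- of $\mathrm{bar}_{\mathrm{DH}}\bigl(\Delta(X, K_X^{-1})\bigr) - \xi$, where $\xi$ is the $B$-weight of the anticanonical section of Proposition~\ref{expression of anticanonical divisor}, and, after accounting for the valuation cone, the colors, and the product configurations, concludes that $X$ is K-polystable exactly when $\mathrm{bar}_{\mathrm{DH}}\bigl(\Delta(X, K_X^{-1})\bigr)$ lies in the relative interior of $\xi + \Cone(\Sigma)$, where $\Sigma$ generates the extremal rays of $-\mathcal V^\vee$ as in Section~2.2. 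This general criterion I would quote verbatim; the remaining work is to rewrite its three inputs --- $\xi$, $\Cone(\Sigma)$, and the Duistermaat--Heckman density --- in restricted root system terms via Proposition~\ref{spherical data of symmetric space}.

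For $\xi$: as noted in the proof of Corollary~\ref{moment polytope}, for a symmetric variety the parabolic $P$ fixing the open $B$-orbit has a Levi subgroup whose root system is $\Phi^\theta$, so $\xi = 2\rho_G - 2\rho_L = \sum_{\alpha \in \Phi^+ \setminus \Phi^\theta}\alpha = 2\rho_\theta$. For the cone: Proposition~\ref{spherical data of symmetric space} identifies $\mathcal V$ with the negative restricted Weyl chamber determined by $-(\Phi_\theta^+)^\vee$, so by the standard duality between the dominant Weyl chamber and the cone of positive roots, $-\mathcal V^\vee$ is the cone generated by the positive restricted roots, i.e. $-\mathcal V^\vee = \mathcal{C}^+_\theta$, whose extremal rays are spanned by the simple restricted roots; hence $\Cone(\Sigma) = \mathcal{C}^+_\theta$ and $\xi + \Cone(\Sigma) = 2\rho_\theta + \mathcal{C}^+_\theta$. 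For the measure: the Duistermaat--Heckman density of a polarized spherical variety is, up to a positive constant that does not affect the barycenter, the restriction to the affine span of the moment polytope of the leading term $\prod_{\alpha \in \Phi^+}\langle \alpha^\vee, p\rangle$ of the Weyl dimension polynomial; by Proposition~\ref{moment polytope of spherical variety} that polytope is $2\rho_\theta + Q_X^*$, whose affine span lies in the $(-1)$-eigenspace of $\theta$ on $\mathfrak X(T)\otimes\mathbb{R}$, and since the coroots $\alpha^\vee$ with $\alpha \in \Phi^\theta$ are $\theta$-fixed they vanish identically on that subspace, so the factors indexed by $\Phi^\theta$ are constant on the polytope and are absorbed into the overall constant; rewriting each surviving factor via $\langle \alpha^\vee, p\rangle = \tfrac{2}{\kappa(\alpha,\alpha)}\kappa(\alpha, p)$ (again only a constant) leaves the density $\prod_{\alpha \in \Phi^+ \setminus \Phi^\theta}\kappa(\alpha, p)\,dp$. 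Substituting these identifications into the general criterion yields the stated equivalence.

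The one genuinely substantial ingredient is Delcroix's general theorem --- the explicit computation of the Donaldson--Futaki invariant of equivariant test configurations of a spherical Fano variety together with its reformulation as a barycenter-in-a-cone condition --- which I would take as a black box rather than reprove; everything after it is bookkeeping, the only point needing a little care being to check that the $\Phi^\theta$-factors of the Weyl dimension polynomial are indeed constant along the moment polytope, that is, that the polytope lies in the expected $\theta$-anti-invariant subspace.
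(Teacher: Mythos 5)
The paper offers no argument for this statement at all: it is quoted verbatim as Delcroix's criterion for smooth Fano symmetric varieties, \cite[Corollary~5.9]{Del20}. Your route --- quote Delcroix's general spherical criterion and specialize it using Proposition~\ref{spherical data of symmetric space} --- is therefore essentially the same (Delcroix remains the black box), and your identifications $\xi = 2\rho_G - 2\rho_L = \sum_{\alpha \in \Phi^+ \setminus \Phi^{\theta}} \alpha = 2\rho_{\theta}$ and $-\mathcal V^{\vee} = \mathcal C^+_{\theta}$ (so that the translated cone is $2\rho_{\theta} + \mathcal C^+_{\theta}$) are correct.

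There is, however, a genuine slip in your treatment of the Duistermaat--Heckman density. You claim the factors of $\prod_{\alpha \in \Phi^+} \kappa(\alpha, p)$ indexed by $\Phi^{\theta}$ are ``constant on the polytope and absorbed into the overall constant.'' They are not merely constant: for $\theta$-fixed $\alpha$ and $p$ in the $(-1)$-eigenspace of $\theta$ (which contains $2\rho_{\theta} + Q_X^*$, since $\kappa$ is $\theta$-invariant and the restricted roots span that eigenspace) one has $\kappa(\alpha,p) = \kappa(\theta\alpha, \theta p) = -\kappa(\alpha,p)$, so each such factor vanishes identically there. Taken literally, ``restrict the full Weyl product and absorb the $\Phi^{\theta}$-factors into a constant'' produces the zero measure, which is nonsense. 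The correct mechanism is that Brion's asymptotic formula (and Delcroix's general statement) defines the Duistermaat--Heckman density as the product only over those positive roots whose linear form does \emph{not} vanish identically on the affine span of the moment polytope --- equivalently, over $\Phi^+ \setminus \Phi_L$ for the Levi $L$ of the parabolic $P$ stabilizing the open $B$-orbit --- and for a symmetric variety $\Phi_L = \Phi^{\theta}$, which is exactly the same fact you already invoked to get $\xi = 2\rho_{\theta}$. With that correction the specialization goes through and matches the stated measure $\prod_{\alpha \in \Phi^+ \setminus \Phi^{\theta}} \kappa(\alpha, p)\, dp$.
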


\subsection{Greatest Ricci lower bounds of smooth Fano symmetric varieties} 
For a Fano manifold $X$, the \emph{greatest Ricci lower bound} $R(X)$ of $X$ is defined as the supremum of all $0 \leq t \leq 1$ such that there exists a K\"{a}hler form $\omega$ in the first Chern class $c_1(X)$ with $\Ric(\omega) \geq t \, \omega$.
This invariant was first studied by Tian~\cite{tian92}, and was explicitely defined by Rubinstein~\cite{Rubinstein08, Rubinstein09}, where it was called Tian's $\beta$-invariant. 
It was further studied by Sz\'{e}kelyhidi~\cite{Sze11}; Song and Wang~\cite{SW16}. 
It is shown to be the same as the maximum existence time of Aubin and Yau's continuity path for finding a K\"{a}hler--Einstein metric. 
Given a K\"{a}hler form $\omega \in c_1(X)$, 
the continuity method involves introducing a family of equations 
$$\Ric(\omega_t) = t \, \omega_t + (1 - t) \omega \qquad \text{ for } t \in [0, 1]$$
depending on a parameter $t$ and finding a K\"{a}hler form $\omega_t$ to solve the equation, which for $t=1$ gives the Einstein equation we want to solve. 
Hence, in the absence of a K\"{a}hler--Einstein metric $R(X)$ may be regarded as a sort of numerical measure of how a Fano manifold $X$ fails to be K\"{a}hler--Einstein. 
Furthermore, the greatest Ricci lower bound $R(X)$ of a Fano manifold $X$ is closely related with the $\delta$-invariant defined by Fujita and Odaka \cite{FO18} using log canonical thresholds of basis type divisors. 
In fact, the basis log canonical threshold $\delta(X, -K_X)$ and $R(X)$ are related by the formula $R(X) = \min\{ 1, \delta(X, -K_X) \}$ by \cite[Corollary~7.6]{BBJ21} and \cite[Theorem 5.7]{CRZ19} (see also \cite[Section~3.3]{Golota20}). 

By definition, if $X$ admits a K\"{a}hler--Einstein metric, then the greatest Ricci lower bound $R(X)$ is equal to 1. 
However, the converse does not hold. 
For example, Tian \cite{tian97} constructed a small deformation of the Mukai--Umemura 3-fold whose general members satisfy $R(X)=1$ but are not K-polystable (see \cite[Section 3]{Sze11}). 

For any toric Fano manifold $X$, 
Li \cite{Li11} found an explicit formula for the greatest Ricci lower bound $R(X)$ purely in terms of the moment polytope associated to $X$.
In the case of smooth Fano equivariant compactifications of complex Lie groups, 
Delcroix \cite{Del17} obtained a formula for the greatest Ricci lower bound 
where the barycenter of the moment polytope with respect to the Lebesgue measure is replaced by the barycenter with respect to the Duistermaat--Heckman measure. 
Furthermore, the recent result of Delcroix and Hultgren \cite{DH21} extends the formula to the case of \emph{horosymmetric} manifolds introduced in \cite{Del20horosymmetric}, which is a class of spherical varieties including smooth horospherical varieties and smooth symmetric varieties. 

Now, we state an explicit expression of the greatest Ricci lower bound for smooth symmetric varieties using the following notion. 

\begin{definition} 
\label{toric polytope} 
Let $X$ be a smooth Fano embedding of a symmetric homogeneous space $G/H$. 
A \emph{toric polytope} $\Delta^{\text{tor}}$ of $X$ is defined as the convex hull of the images by the restricted Weyl group $\overline{W}$ of the moment polytope $\Delta(X, K_X^{-1})$.  
\end{definition}

The following proposition follows from \cite[Corollary 1.3]{DH21}. 

\begin{proposition} 
\label{Greatest Ricci lower bounds}
Let $X$ be a smooth Fano embedding of a symmetric homogeneous space $G/H$ associated to an involution $\theta$ of $G$. 
The greatest Ricci lower bound $R(X)$ of $X$ is equal to 
$$\sup \left \{ t \in (0, 1) : 2 \rho_{\theta} + \frac{t}{1-t} (2 \rho_{\theta} - \textbf{bar}_{DH}(\Delta)) \in \textup{Relint}(\Delta^{\text{tor}} - \mathcal C^+_{\theta}) \right \},$$ 
where $\textbf{bar}_{DH}(\Delta)$ is the barycenter of the moment polytope $\Delta(X, K_X^{-1})$ with respect to the Duistermaat--Heckman measure 
and $\textup{Relint}(\Delta^{\text{tor}} - \mathcal C^+_{\theta})$ means the relative interior of the Minkowski difference $\Delta^{\text{tor}} - \mathcal C^+_{\theta}$.  
\end{proposition}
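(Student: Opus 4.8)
\emph{Proof strategy.} The plan is to obtain the formula by specializing the general result of Delcroix and Hultgren \cite[Corollary~1.3]{DH21} on the greatest Ricci lower bound of Fano \emph{horosymmetric} manifolds. The first point to record is that a smooth Fano embedding $X$ of a symmetric homogeneous space $G/H$ is, in particular, a smooth Fano horosymmetric manifold in the sense of \cite{Del20horosymmetric}: in the horosymmetric datum of $G/H$ one may take the parabolic subgroup to be all of $G$, with associated involution $\theta$, so that the horospherical ``base'' is a point and the symmetric part is all of $G/H$. Hence \cite[Corollary~1.3]{DH21} applies to $X$, and it only remains to transcribe the combinatorial ingredients occurring there into the restricted-root-system description furnished by Proposition~\ref{spherical data of symmetric space} and Corollary~\ref{moment polytope}.

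Next I would make the translation explicit. By Proposition~\ref{moment polytope of spherical variety} and Corollary~\ref{moment polytope}, the polytope $\Delta(X, K_X^{-1})$ is the anticanonical moment polytope serving as the input of \cite[Corollary~1.3]{DH21}; the weight $\xi$ of the $B$-semi-invariant section of $K_X^{-1}$ equals $2\rho_{\theta}$, as computed in the proof of Corollary~\ref{moment polytope}, and it is this point that plays the role of the ``origin'' (the analogue of the vector $2\rho$ appearing in the group-compactification formula of \cite{Del17}). The Duistermaat--Heckman measure of \cite{DH21} specializes, on a symmetric variety, to the density $\prod_{\alpha \in \Phi^+\setminus\Phi^{\theta}} \kappa(\alpha, p)\, dp$ already used in Delcroix's K-stability criterion (Proposition~\ref{criterion}), so that its barycenter over $\Delta(X, K_X^{-1})$ is exactly $\textbf{bar}_{DH}(\Delta)$. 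Finally, the polyhedral region controlling the ``toric'' directions of the continuity path is governed by the convex hull of the $\overline{W}$-orbit of $\Delta(X, K_X^{-1})$, namely the toric polytope $\Delta^{\text{tor}}$ of Definition~\ref{toric polytope}, together with the unbounded directions coming from the $G$-action, which are encoded by the cone $\mathcal C^+_{\theta}$ generated by the positive restricted roots; jointly they enter \cite[Corollary~1.3]{DH21} through the Minkowski difference $\Delta^{\text{tor}} - \mathcal C^+_{\theta}$.

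With this dictionary the formula of \cite[Corollary~1.3]{DH21} --- which, in the spirit of Li's toric formula \cite{Li11} and its extension to group compactifications \cite{Del17}, identifies $R(X)$ with the largest parameter $t$ along the continuity path for which the ``equilibrium point'' $2\rho_{\theta} + \tfrac{t}{1-t}\bigl(2\rho_{\theta} - \textbf{bar}_{DH}(\Delta)\bigr)$ still lies in the admissible region --- becomes precisely the asserted identity
\[
R(X) = \sup\Bigl\{ t \in (0,1) : 2 \rho_{\theta} + \tfrac{t}{1-t}\bigl(2 \rho_{\theta} - \textbf{bar}_{DH}(\Delta)\bigr) \in \textup{Relint}\bigl(\Delta^{\text{tor}} - \mathcal C^+_{\theta}\bigr) \Bigr\}.
\]
(As a consistency check, letting $t \to 1$ forces the direction $2\rho_{\theta} - \textbf{bar}_{DH}(\Delta)$ to lie in the recession cone $-\mathcal C^+_{\theta}$ of $\Delta^{\text{tor}} - \mathcal C^+_{\theta}$, i.e. $\textbf{bar}_{DH}(\Delta) \in 2\rho_{\theta} + \mathcal C^+_{\theta}$, matching Delcroix's criterion in Proposition~\ref{criterion} for $R(X)=1$.) I expect the only genuine difficulty to be bookkeeping rather than substance: one must fix mutually compatible normalizations for the Cartan--Killing form $\kappa$, for the shifts $\rho_G, \rho_L$ and $2\rho_{\theta}$, and for the identification of $\mathcal M \otimes \mathbb R$ with the real vector space used in \cite{DH21}, and check that no spurious factor of $2$ or sign enters when passing from the abstract horosymmetric datum to the restricted root datum of Proposition~\ref{spherical data of symmetric space}; once these conventions are aligned, the two expressions for $R(X)$ coincide term by term.
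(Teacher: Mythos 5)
Your proposal is correct and matches the paper's approach: the paper justifies this proposition exactly by specializing \cite[Corollary~1.3]{DH21} to the symmetric case (a symmetric space being horosymmetric with trivial parabolic datum), with the same dictionary between $2\rho_{\theta}$, the Duistermaat--Heckman barycenter, the toric polytope $\Delta^{\text{tor}}$, and the cone $\mathcal C^+_{\theta}$ that you describe. In fact the paper gives no further argument beyond this citation, so your write-up is, if anything, more detailed than the paper's own justification.
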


We immediately get an elementary geometric expression for the greatest Ricci lower bound from the above result.

\begin{corollary} 
\label{formula for greatest Ricci lower bounds}
Suppose that a smooth Fano embedding $X$ of a symmetric homogeneous space associated to an involution $\theta$ of $G$ does not admit a K\"{a}hler--Einstein metric. 
Let $A$ be the point corresponding to $2 \rho_{\theta} $ in $\mathfrak X(T)$ and $C$ be $\textbf{bar}_{DH}(\Delta)$. 
If $Q$ is the point at which the half-line starting from the barycenter $C$ in the direction of $A$ intersects the boundary of $\Delta^{\text{tor}} - \mathcal C^+_{\theta}$, 
then the greatest Ricci lower bound of $X$ is equal to 
$$R(X) = \frac{\overline{AQ}}{\overline{CQ}}.$$  
\end{corollary}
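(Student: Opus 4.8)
The plan is to unwind the supremum in Proposition~\ref{Greatest Ricci lower bounds} by a single change of parameter, after which the statement reduces to an elementary fact about a ray leaving a convex set. Throughout write $A=2\rho_{\theta}$, $C=\textbf{bar}_{DH}(\Delta)$, and $S=\Delta^{\text{tor}}-\mathcal{C}^{+}_{\theta}$. First I would observe that if one sets $u=\tfrac{1}{1-t}$, an increasing bijection of $(0,1)$ onto $(1,\infty)$ with $t=1-\tfrac1u$ and $\tfrac{t}{1-t}=u-1$, then
\begin{equation*}
2\rho_{\theta}+\frac{t}{1-t}\bigl(2\rho_{\theta}-\textbf{bar}_{DH}(\Delta)\bigr)=A+(u-1)(A-C)=C+u(A-C),
\end{equation*}
so the condition inside the supremum says exactly that the point of the half-line from $C$ in the direction of $A$ with parameter $u>1$ lies in $\textup{Relint}(S)$. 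Thus Proposition~\ref{Greatest Ricci lower bounds} rewrites as $R(X)=1-\tfrac{1}{u^{*}}$, where $u^{*}=\sup\{\,u>1:\ C+u(A-C)\in\textup{Relint}(S)\,\}$, and the whole problem becomes: show $u^{*}<\infty$ and identify $C+u^{*}(A-C)$ with the boundary point $Q$.

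Next I would check that both $A$ and $C$ lie in $\textup{Relint}(S)$. By Proposition~\ref{moment polytope of spherical variety} the moment polytope is $\Delta=2\rho_{\theta}+Q_{X}^{*}$ with $0$ in the relative interior of $Q_{X}^{*}$, so $A=2\rho_{\theta}\in\textup{Relint}(\Delta)$, and $C$, being the barycenter of the polytope $\Delta$ against a density positive on $\textup{Relint}(\Delta)$, also lies in $\textup{Relint}(\Delta)$; since $\textup{Relint}(\Delta)\subseteq\textup{Relint}(\Delta^{\text{tor}})\subseteq\textup{Relint}(S)$ this gives the claim. It follows that $C+u(A-C)\in\textup{Relint}(S)$ for every $u\in[0,u^{*})$, that $u^{*}>1$, and --- the defining condition being open --- that the supremum in Proposition~\ref{Greatest Ricci lower bounds} is not attained, so the admissible $t$ form the interval $(0,t^{*})$ with $t^{*}=1-\tfrac{1}{u^{*}}$. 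Once $u^{*}<\infty$ is established, $Q=C+u^{*}(A-C)$ lies on $\partial S$ and is precisely the intersection point described in the statement; then the collinearity of $A$, $C$, $Q$, together with $Q-A=(u^{*}-1)(A-C)$ and $Q-C=u^{*}(A-C)$, yields $\overline{AQ}=(u^{*}-1)\,\overline{AC}$ and $\overline{CQ}=u^{*}\,\overline{AC}$, hence $\overline{AQ}/\overline{CQ}=(u^{*}-1)/u^{*}=1-\tfrac{1}{u^{*}}=R(X)$.

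The step I expect to be the main obstacle is the finiteness $u^{*}<\infty$, i.e. that the half-line from $C$ towards $A$ actually meets $\partial S$. The recession cone of $S=\Delta^{\text{tor}}-\mathcal{C}^{+}_{\theta}$ is $-\mathcal{C}^{+}_{\theta}$, and a ray issued from a point of $\textup{Relint}(S)$ remains in $\textup{Relint}(S)$ for all parameter values exactly when its direction lies in $-\mathcal{C}^{+}_{\theta}$; hence $u^{*}=\infty$ would force $A-C\in-\mathcal{C}^{+}_{\theta}$, i.e. $\textbf{bar}_{DH}(\Delta)-2\rho_{\theta}\in\mathcal{C}^{+}_{\theta}$. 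Now $\textbf{bar}_{DH}(\Delta)-2\rho_{\theta}\in\textup{Relint}(\mathcal{C}^{+}_{\theta})$ is exactly Delcroix's criterion (Proposition~\ref{criterion}) for the existence of a K\"ahler--Einstein metric, so the absence of such a metric rules out the relative interior of the cone; the delicate point is to upgrade this to the exclusion of the whole closed cone $\mathcal{C}^{+}_{\theta}$ --- equivalently, to the strict inequality $R(X)<1$ --- which is what really guarantees $u^{*}<\infty$ and makes $Q$ well defined. Granting that, the remaining content is the routine change of variables and the one-line similar-triangles computation above.
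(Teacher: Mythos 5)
Your argument is correct and is essentially the paper's own route: the corollary is meant to follow immediately from Proposition~\ref{Greatest Ricci lower bounds} by exactly your reparametrization $u=\tfrac{1}{1-t}$, writing the test point as $C+u(A-C)$ and reading off $R(X)=1-\tfrac{1}{u^*}=\overline{AQ}/\overline{CQ}$ by similar triangles, using convexity of $\Delta^{\text{tor}}-\mathcal C^+_{\theta}$ and $A,C\in\textup{Relint}$. The finiteness issue you flag (excluding $\textbf{bar}_{DH}(\Delta)-2\rho_{\theta}$ from the boundary of the closed cone, i.e.\ ruling out $R(X)=1$ without a K\"ahler--Einstein metric) is not actually needed for the statement as formulated, since the existence of the intersection point $Q$ is part of the hypothesis; when the ray's direction lies in the recession cone $-\mathcal C^+_{\theta}$ no such $Q$ exists and the formula is simply not asserted.
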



\section{Wonderful compactifications of symmetric spaces of type AIII}

Let us recall the involution on $G=\SL_m(\mathbb C)$ of type AIII$(r, m)$ for $m \geq 2r$. 
Let $\theta$ be the involution of $G$ defined by $g \mapsto J_r g J_r$, 
where $J_r=\begin{pmatrix}
O & O & S_r \\
O & I_{m-2r} & O\\
S_r & O & O
\end{pmatrix}$ 
for the $r \times r$ matrix $(S_r)_{i, j}=\delta_{i+j, r+1}$. 
The subgroup $G^{\theta}$ fixed by $\theta$ is conjugate to the subgroup $S(\GL_r(\mathbb C) \times \GL_{m-r}(\mathbb C))$. 
If $m > 2r$, then the normalizer $N_G(G^{\theta})$ is equal to $G^{\theta}$. 
However, if $m = 2r$, then $N_G(G^{\theta})$ is different from $G^{\theta}$ and $N_G(G^{\theta}) / G^{\theta} \cong \mathbb Z_2$.

\subsection{Symmetric homogeneous spaces of type $\textup{AIII}(2, m)$} 

The homogeneous space $G/G^{\theta}$ is considered as the variety parametrizing pairs $(V_1, V_2)$ of linear subspaces in an $m$-dimensional complex vector space such that $\dim V_1 = r$, $\dim V_2 = m-r$, and $V_1 \cap V_2 = \{ \mathbf 0 \}$. 
Note that the dimension of the homogeneous space $\SL_m(\mathbb C)/S(\GL_r(\mathbb C) \times \GL_{m-r}(\mathbb C))$ is equal to $(m^2 - 1) - \{r^2 + (m-r)^2 -1\} = 2r(m-r)$. 
Since the Grassmannians $\Gr(r, m)$ and $\Gr(m-r, m)$ have a natural action of $\SL_m(\mathbb C)$ as homogeneous spaces, 
the symmetric space $G/G^{\theta} = \SL_m(\mathbb C)/S(\GL_r(\mathbb C) \times \GL_{m-r}(\mathbb C))$ is an open orbit for the diagonal action of $\SL_m(\mathbb C)$ on the product $\Gr(r, m) \times \Gr(m-r, m)$. 

For the root system $\mathsf{A}_{m-1}$ of $\SL_m(\mathbb C)$ with respect to the maximal torus $T$ of diagonal matrices, 
we know that the roots of $\mathsf{A}_{m-1}$ consist of algebraic group homomorphisms $\alpha_{i, j} \colon T \to \mathbb C^*$ defined by 
\[
\alpha_{i, j}(\diag(a_1, a_2, \cdots, a_{m-1}, a_{m})) = \frac{a_i}{a_j}
\] 
for $1 \leq i \neq j \leq m$. 
For simplicity, the simple roots $\alpha_{i, i+1}$ will be denoted by $\alpha_i$ for $1 \leq i \leq m-1$, 
and we use the same notations for the corresponding roots of the Lie algebra $\mathfrak{sl}_m(\mathbb C)$. 

\begin{figure}
 \begin{minipage}[b]{\textwidth}
 \centering

\begin{tikzpicture}
\clip (-2.3,-2.3) rectangle (3.3, 2.3); 

\coordinate (a1) at (1,-1);
\coordinate (a2) at (0,1);
\coordinate (a3) at ($(a1)+(a2)$);
\coordinate (a4) at ($2*(a3)$);
\coordinate (a5) at ($(a3)+(a2)$);
\coordinate (a6) at ($2*(a2)$);

\coordinate (v1) at (5,0);
\coordinate (v2) at (5,2);
\coordinate (v3) at (3.5, 3.5);

\coordinate (Origin) at (0,0);
\coordinate (asum) at ($(a1)+(a2)$);
\coordinate (2rho) at (4,2);

\foreach \x  in {-8,-7,...,12}{
  \draw[help lines,dashed]
    (\x,-8) -- (\x,11)
    (-8,\x) -- (11,\x) 
     [rotate=45] (1.414*\x/2,-8) -- (1.414*\x/2,12) ;
}

\foreach \x  in {-8,-7,...,12}{
  \draw[help lines,dashed]
     [rotate=135] (1.414*\x/2,-8) -- (1.414*\x/2,12) ;
}

\fill (Origin) circle (2pt) node[below left] {0};

\fill (a1) circle (2pt) node[below] {$\alpha_{1,m} - \alpha_{2,m-1}$};
\fill (a2) circle (2pt) node[left] {$\alpha_{2,m-1}$};
\fill (a3) circle (2pt) node[below] {$\alpha_{1,m}$};
\fill (a4) circle (2pt) node[below right] {$2\alpha_{1,m}$};
\fill (a5) circle (2pt) node[above right] {$\alpha_{1,m} + \alpha_{2,m-1}$};
\fill (a6) circle (2pt) node[left] {$2 \alpha_{2,m-1}$};

\draw[->,,thick](Origin)--(a1);
\draw[->,,thick](Origin)--(a2);
\draw[->,,thick](Origin)--(a3); 
\draw[->,,thick](Origin)--(a4);
\draw[->,,thick](Origin)--(a5);
\draw[->,,thick](Origin)--(a6); 

\end{tikzpicture} 

\caption{Positive restricted roots of type $\text{AIII}(2, m)$ for $m \geq 5$.}
\label{BC2}
\end{minipage}
\end{figure}
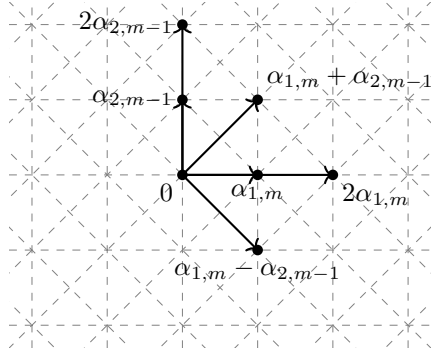

\begin{figure}
 \begin{minipage}[b]{\textwidth}
 \centering

\begin{tikzpicture}
\clip (-2.3,-2.3) rectangle (3.3, 2.3); 

\coordinate (a1) at (1,-1);
\coordinate (a2) at (0,1);
\coordinate (a3) at ($(a1)+(a2)$);
\coordinate (a4) at ($2*(a3)$);
\coordinate (a5) at ($(a3)+(a2)$);
\coordinate (a6) at ($2*(a2)$);

\coordinate (v1) at (5,0);
\coordinate (v2) at (5,2);
\coordinate (v3) at (3.5, 3.5);

\coordinate (Origin) at (0,0);
\coordinate (asum) at ($(a1)+(a2)$);
\coordinate (2rho) at (4,2);

\foreach \x  in {-8,-7,...,12}{
  \draw[help lines,dashed]
    (\x,-8) -- (\x,11)
    (-8,\x) -- (11,\x) 
     [rotate=45] (1.414*\x/2,-8) -- (1.414*\x/2,12) ;
}

\foreach \x  in {-8,-7,...,12}{
  \draw[help lines,dashed]
     [rotate=135] (1.414*\x/2,-8) -- (1.414*\x/2,12) ;
}

\fill (Origin) circle (2pt) node[below left] {0};

\fill (a1) circle (2pt) node[below] {$\alpha_{1,4} - \alpha_{2,3}$};
\fill (a4) circle (2pt) node[below right] {$2\alpha_{1,4}$};
\fill (a5) circle (2pt) node[above right] {$\alpha_{1,4} + \alpha_{2,3}$};
\fill (a6) circle (2pt) node[left] {$2 \alpha_{2,3}$};

\draw[->,,thick](Origin)--(a1);
\draw[->,,thick](Origin)--(a4);
\draw[->,,thick](Origin)--(a5);
\draw[->,,thick](Origin)--(a6); 

\end{tikzpicture} 

\caption{Positive restricted roots of type $\text{AIII}(2, 4)$.}
\label{C2}
\end{minipage}
\end{figure}
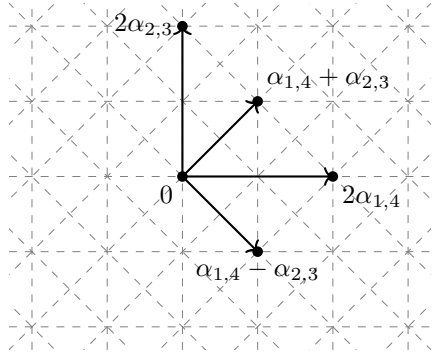

\begin{lemma} 
\label{restricted root system}
Let $\theta$ be the involution on $\SL_m(\mathbb C)$ of type $\textup{AIII}(2, m)$ for $m \geq 4$. 
The restricted root system $\Phi_{\theta}$ is as follow:
 \[
  \Phi_{\theta} = 
  \begin{cases}
  \{\pm \alpha_{1,m}, \pm \alpha_{2,m-1}, \pm 2 \alpha_{1,m}, \pm 2 \alpha_{2,m-1}, \pm(\alpha_{1,m} - \alpha_{2,m-1}), \pm (\alpha_{1,m} + \alpha_{2,m-1})\} \text{ of type $\mathsf{BC}_{2}$ }  & \text{ when } m \geq 5, \\
  \{\pm 2 \alpha_{1,4}, \pm 2 \alpha_{2,3}, \pm (\alpha_{1,4} - \alpha_{2,3}), \pm (\alpha_{1,4} + \alpha_{2,3})\} \text{ of type $\mathsf{C}_{2}$ } & \text{ when } m = 4.  
  \end{cases}
\]
Moreover, positive restricted roots $\alpha_{1,m}, \alpha_{2,m-1}, \alpha_{1,m} - \alpha_{2,m-1}, \alpha_{1,m} + \alpha_{2,m-1}$ have multiplicities $2(m-4)$, $2(m-4)$, $2$, $2$, respectively. 
\end{lemma}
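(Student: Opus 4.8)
The plan is to reduce everything to the combinatorics of the permutation by which $\theta$ acts on the diagonal torus $T$. Write $e_i \in \mathfrak X(T) \otimes \mathbb R$ for the character $\diag(a_1, \dots, a_m) \mapsto a_i$, so that $\alpha_{i,j} = e_i - e_j$. Since $J_2$ is the permutation matrix of the involution $w$ of $\{1, \dots, m\}$ that swaps $1 \leftrightarrow m$ and $2 \leftrightarrow m-1$ and fixes $3, \dots, m-2$, and $\theta$ is conjugation by $J_2$, the induced action of $\theta$ on $\mathfrak X(T)$ is $e_i \mapsto e_{w(i)}$; hence $\theta(\alpha_{i,j}) = \alpha_{w(i),w(j)}$. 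In particular $\Phi^{\theta} = \{ \alpha_{i,j} : 3 \le i \ne j \le m-2 \}$ is a (possibly empty) subsystem of type $\mathsf A_{m-5}$, and since $w$ reverses the order on each of the blocks $\{1, 2\}$ and $\{m-1, m\}$, interchanges these two blocks, and fixes $\{3, \dots, m-2\}$ pointwise, one checks directly that the standard positive system $\Phi^+ = \{\alpha_{i,j} : i < j\}$ satisfies the compatibility $\theta(\alpha) = \alpha$ or $\theta(\alpha) \in \Phi^-$ required by \cite[Lemma~1.2]{dCP83}; equivalently $w(i) > w(j)$ whenever $i < j$ and $\{i,j\} \not\subset \{3,\dots,m-2\}$.

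Next I would compute $\alpha - \theta(\alpha)$ for every $\alpha \in \Phi^+ \setminus \Phi^{\theta}$, sorting the index pairs $i < j$ according to the three blocks $L = \{1, 2\}$, $M = \{3, \dots, m-2\}$, $R = \{m-1, m\}$. The finitely many cases give: for each $k \in M$, both $\alpha_{1,k}$ and $\alpha_{k,m}$ map to $\alpha_{1,m}$, while both $\alpha_{2,k}$ and $\alpha_{k,m-1}$ map to $\alpha_{2,m-1}$; $\alpha_{1,m} \mapsto 2\alpha_{1,m}$ and $\alpha_{2,m-1}\mapsto 2\alpha_{2,m-1}$; $\alpha_{1,m-1}$ and $\alpha_{2,m}$ map to $\alpha_{1,m} + \alpha_{2,m-1}$; and $\alpha_{1,2}$ and $\alpha_{m-1,m}$ map to $\alpha_{1,m} - \alpha_{2,m-1}$. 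Since every ordinary root space of $\mathsf A_{m-1}$ is one-dimensional and two roots have the same restriction to the split Cartan exactly when their images under $\alpha \mapsto \alpha - \theta(\alpha)$ coincide, the multiplicity of a positive restricted root equals the number of $\alpha \in \Phi^+ \setminus \Phi^{\theta}$ lying above it; the tally above therefore gives multiplicities $2(m-4)$ for $\alpha_{1,m}$, $2(m-4)$ for $\alpha_{2,m-1}$, $2$ for each of $\alpha_{1,m} \pm \alpha_{2,m-1}$, and $1$ for each of $2\alpha_{1,m}, 2\alpha_{2,m-1}$. Adjoining the negatives (produced by the negative roots), $\Phi_{\theta}$ is exactly the twelve-element set claimed when $m \ge 5$; when $m = 4$ the block $M$ is empty, so the roots $\pm \alpha_{1,4}, \pm\alpha_{2,3}$ drop out and only the remaining eight elements survive.

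Finally, to identify the type I would record the one-line computation, using that $\langle e_i, e_j \rangle = \delta_{ij} - \tfrac1m$ up to scale for the Cartan--Killing form, that $\alpha_{1,m}$ and $\alpha_{2,m-1}$ are orthogonal of equal length; putting $\varepsilon_1 = \alpha_{1,m}$ and $\varepsilon_2 = \alpha_{2,m-1}$, the root set $\{\pm\varepsilon_i, \pm 2\varepsilon_i, \pm\varepsilon_1 \pm \varepsilon_2\}$ (respectively $\{\pm 2\varepsilon_i, \pm\varepsilon_1\pm\varepsilon_2\}$) is by definition of type $\mathsf{BC}_2$ (respectively $\mathsf C_2$); alternatively one invokes Proposition~\ref{spherical data of symmetric space} together with the classification of rank-two root systems. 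I do not expect a genuine obstacle here: the argument is a bounded case check, and the only things demanding care are keeping the normalization ``$\alpha - \theta(\alpha)$'' (rather than half of it) consistent, so that $\alpha_{1,m}, \alpha_{2,m-1}$ are the short restricted roots and $2\alpha_{1,m}, 2\alpha_{2,m-1}$ the long ones, and not conflating the pair $\{\alpha_{1,2}, \alpha_{m-1,m}\}$ with the pair $\{\alpha_{1,m-1}, \alpha_{2,m}\}$ when counting the two middle-length restricted roots.
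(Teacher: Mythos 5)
Your proposal is correct and follows essentially the same route as the paper: identify $\theta$ with the permutation swapping $1\leftrightarrow m$, $2\leftrightarrow m-1$, note $\Phi^{\theta}=\{\alpha_{i,j}: 3\le i\ne j\le m-2\}$, and compute $\alpha-\theta(\alpha)$ case by case over the three blocks, tallying the preimages to get the multiplicities $2(m-4),2(m-4),2,2$ and the $\mathsf{BC}_2$ (resp.\ $\mathsf{C}_2$) picture for $m\ge 5$ (resp.\ $m=4$). The only additions beyond the paper's argument are the explicit positivity check for $\Phi^+$ and the orthogonality computation identifying the type, both of which are fine.
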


\begin{proof}
Since the involution $\theta$ of type $\textup{AIII}(2, m)$ acts on diagonal matrices as 
\[
\theta(\diag(a_1, a_2, a_3, \cdots, a_{m-2}, a_{m-1}, a_{m})) = \diag(a_{m}, a_{m-1}, a_3, \cdots, a_{m-2}, a_{2}, a_{1}), 
\]
the roots $\alpha_{i, j}$ for $3 \leq i, j \leq m-2$ are fixed by the action of $\theta$. 
In particular, the cardinality of the set $\Phi_{\theta}^+ = \Phi^+ \backslash \Phi^{\theta}$ is equal to $\binom m 2 - \binom {m-4} 2 = 2(2m-5)$.

(i) $\alpha_{1,2} - \theta(\alpha_{1,2}) = \alpha_{1,2} - \alpha_{m,m-1} = \alpha_{1,m} - \alpha_{2,m-1} = \alpha_{m-1,m} - \alpha_{2,1}= \alpha_{m-1,m} - \theta(\alpha_{m-1,m})$ 
and 
$\alpha_{1,m-1} - \theta(\alpha_{1,m-1}) = \alpha_{1,m-1} - \alpha_{m,2} = \alpha_{1,m} + \alpha_{2,m-1} = \alpha_{2,m} - \alpha_{m-1,1}= \alpha_{2,m} - \theta(\alpha_{2,m})$.

(ii) For $3 \leq k \leq m-2$, 
 $\alpha_{1,k} - \theta(\alpha_{1,k}) = \alpha_{1,k} - \alpha_{m,k} = \alpha_{1,m} = \alpha_{k,m} - \alpha_{k,1} = \alpha_{k,m} - \theta(\alpha_{k,m})$ 
and 
 $\alpha_{2,k} - \theta(\alpha_{2,k}) = \alpha_{2,k} - \alpha_{m-1,k} = \alpha_{2,m-1} = \alpha_{k,m-1} - \alpha_{k,2} = \alpha_{k,m-1} - \theta(\alpha_{k,m-1})$. 

(iii) $\alpha_{1,m} - \theta(\alpha_{1,m}) = \alpha_{1,m} - \alpha_{m,1} = 2 \alpha_{1,m}$ and $\alpha_{2,m-1} - \theta(\alpha_{2,m-1}) = \alpha_{2,m-1} - \alpha_{m-1,2} = 2 \alpha_{2,m-1}$.
\end{proof}

\begin{lemma} 
\label{2rho}
For the involution $\theta$ on $\SL_m(\mathbb C)$ of type $\textup{AIII}(2, m)$, we have 
$2\rho_{\theta} = (m-1) \alpha_{1,m} + (m-3) \alpha_{2,m-1}$. 
\end{lemma}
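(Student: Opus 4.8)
The plan is to compute $2\rho_\theta=\sum_{\alpha\in\Phi^+\setminus\Phi^\theta}\alpha$ by splitting off the contribution of the $\theta$-fixed roots. Write $\rho$ for the half-sum of all positive roots of $\SL_m(\mathbb C)$ and $\rho_{\Phi^\theta}$ for the half-sum of the positive roots lying in $\Phi^\theta=\{\alpha\in\Phi:\theta(\alpha)=\alpha\}$. Since $\Phi^+\cap\Phi^\theta$ is exactly the set of $\theta$-fixed positive roots, one has $2\rho_\theta=2\rho-2\rho_{\Phi^\theta}$, and the computation reduces to understanding these two half-sums in coordinates.

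First I would identify $\Phi^\theta$. By the action of $\theta$ on diagonal matrices recorded in the proof of Lemma~\ref{restricted root system}, $\theta$ acts on the character lattice by the permutation $\sigma=(1\,m)(2\,m-1)$ of the coordinates $e_1,\dots,e_m$ (normalized so that $\sum_i e_i=0$, with $\alpha_{i,j}=e_i-e_j$), hence $\theta(\alpha_{i,j})=\alpha_{\sigma(i),\sigma(j)}$. Thus $\alpha_{i,j}\in\Phi^\theta$ precisely when $i,j\in\{3,\dots,m-2\}$, so $\Phi^+\cap\Phi^\theta$ is the standard positive system of an $\mathsf{A}_{m-5}$-type subsystem supported on $e_3,\dots,e_{m-2}$; moreover, with the standard choice $\Phi^+=\{\alpha_{i,j}:i<j\}$ one checks that $\theta(\alpha)\in\Phi^-$ for every $\alpha\in\Phi^+\setminus\Phi^\theta$, so this is a $\theta$-adapted positive system as in \cite[Lemma~1.2]{dCP83}.

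Next I would run the short coordinate computation. The usual formula gives $2\rho=\sum_{i=1}^m(m+1-2i)\,e_i$, and applying it to the subsystem on $e_3,\dots,e_{m-2}$ gives $2\rho_{\Phi^\theta}=\sum_{i=3}^{m-2}(m+1-2i)\,e_i$. The key point is that the coefficient of each middle coordinate $e_i$ with $3\le i\le m-2$ coincides in the two sums, so in the difference only the four outer coordinates survive:
\[
2\rho_\theta=(m-1)\,e_1+(m-3)\,e_2-(m-3)\,e_{m-1}-(m-1)\,e_m=(m-1)\,\alpha_{1,m}+(m-3)\,\alpha_{2,m-1},
\]
using $\alpha_{1,m}=e_1-e_m$ and $\alpha_{2,m-1}=e_2-e_{m-1}$. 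This holds uniformly for $m\ge 4$: when $m=4$ the subsystem $\Phi^\theta$ is empty, $2\rho_{\Phi^\theta}=0$, and the formula still reads $2\rho_\theta=3\alpha_{1,4}+\alpha_{2,3}$.

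There is no genuine obstacle here: the argument is a short bookkeeping computation, the only point requiring care being the choice of the $\theta$-adapted positive system, which makes $\Phi^+\setminus\Phi^\theta=\{\alpha_{i,j}\in\Phi^+:\{i,j\}\cap\{1,2,m-1,m\}\ne\emptyset\}$. As an independent check one may instead sum the positive restricted roots weighted by the multiplicities from Lemma~\ref{restricted root system} (together with $2\alpha_{1,m}$ and $2\alpha_{2,m-1}$, each of multiplicity one, coming from $\alpha_{1,m}$ and $\alpha_{2,m-1}$ themselves), using the identity $2\rho_\theta=\tfrac12\sum_{\alpha\in\Phi^+\setminus\Phi^\theta}\bigl(\alpha-\theta(\alpha)\bigr)$, valid because $\alpha\mapsto-\theta(\alpha)$ permutes $\Phi^+\setminus\Phi^\theta$; this again yields $(m-1)\alpha_{1,m}+(m-3)\alpha_{2,m-1}$.
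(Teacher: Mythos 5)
Your proof is correct, and it is a genuine (if mild) variant of the paper's argument. The paper computes $2\rho_\theta=\sum_{\alpha\in\Phi^+\setminus\Phi^\theta}\alpha$ head-on, writing the four families of non-fixed positive roots as sums of simple roots and adding them up, with the case $m=4$ (where $\Phi^\theta=\emptyset$) verified separately; it then remarks, as an alternative, that the multiplicity-weighted sum of positive restricted roots equals $2\times 2\rho_\theta$. You instead decompose $2\rho_\theta=2\rho-2\rho_{\Phi^\theta}$ and work in the $e_i$-coordinates, where the standard formula $2\rho=\sum_i(m+1-2i)e_i$ and the observation that the subsystem on $e_3,\dots,e_{m-2}$ has the \emph{same} coefficients $m+1-2i$ make the middle coordinates cancel; this yields the formula uniformly for all $m\geq 4$ and replaces the paper's ``straightforward computation'' with a transparent cancellation. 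You also correctly attend to the one point the paper leaves implicit, namely that the standard positive system is $\theta$-adapted in the sense of \cite[Lemma~1.2]{dCP83} (so that $\Phi^+\setminus\Phi^\theta$ is the right index set), and your consistency check via the restricted-root multiplicities of Lemma~\ref{restricted root system}, based on the identity $2\rho_\theta=\tfrac12\sum_{\alpha\in\Phi^+\setminus\Phi^\theta}(\alpha-\theta(\alpha))$, is precisely the alternative computation mentioned at the end of the paper's proof. Both routes are short bookkeeping; yours buys uniformity in $m$ and a cleaner verification, while the paper's stays entirely in the simple-root basis used elsewhere in the computation.
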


\begin{proof}
From the proof of Lemma~\ref{restricted root system}, if $m \geq 5$ then we have 
\begin{align*}
2\rho_{\theta} & = \sum_{\alpha \in \Phi^+ \backslash \Phi^{\theta}} \alpha 
= \sum_{i=1}^{m-1}\sum_{j=1}^{i} \alpha_j + \sum_{i=2}^{m-1}\sum_{j=2}^{i} \alpha_j + \sum_{i=1}^{m-3}\sum_{j=1}^{i} \alpha_{m-j} + \sum_{i=2}^{m-3}\sum_{j=2}^{i} \alpha_{m-j} \\
& = (m-1) \alpha_{1,m} + (m-3) \alpha_{2,m-1}
\end{align*}
by the straightforward computations. 
When $m=4$, as $\Phi^{\theta} = \emptyset$ we can easily check that 
$$2\rho_{\theta} = \alpha_{1,4} + \alpha_{2,3} + (\alpha_{1,2} + \alpha_{2,4}) + (\alpha_{1,3} + \alpha_{3,4}) = 3 \alpha_{1,4} + \alpha_{2,3}.$$
Alternatively, it is possible to calculate $2\rho_{\theta}$ from the description of the restricted root system $\Phi_{\theta}$ in Lemma~\ref{restricted root system} 
because the sum of the positive restricted roots with multiplicities is equal to $2 \times 2\rho_{\theta}$. 
\end{proof}

From the detailed descriptions in \cite[Section 6.1]{Vust90}, if $m > 2r$ then we know that 
the symmetric homogeneous space $\SL_m(\mathbb C)/S(\GL_r(\mathbb C) \times \GL_{m-r}(\mathbb C))$ of type $\textup{AIII}(r, m)$ 
has $r+1$ colors $D_1, \cdots D_{r-1}, D^+, D^-$ such that 
\[
\mathcal D(\alpha_{1}) = \mathcal D(\alpha_{m-1}) = \{ D_1 \}, \cdots, 
\mathcal D(\alpha_{r-1}) = \mathcal D(\alpha_{m-r+1}) = \{ D_{r-1} \}, 
\mathcal D(\alpha_r) = \{ D^+ \} \text{ and } \mathcal D(\alpha_{m-r}) = \{ D^- \}. 
\]
On the other hand, if $m = 2r$, that is, $m-r = r$, then $\alpha_{m-r} = \alpha_r$ so that $D^+ = D^-$; 
hence the symmetric homogeneous space $\SL_{2r}(\mathbb C)/S(\GL_r(\mathbb C) \times \GL_r(\mathbb C))$ has $r$ colors $D_1, \cdots D_{r-1}, D^+ = D^-$. 

\begin{lemma} 
\label{type of colors}
For colors $D_1$, $D^+$, $D^-$ in the symmetric space $\SL_m(\mathbb C)/S(\GL_2(\mathbb C) \times \GL_{m-2}(\mathbb C))$ of type $\textup{AIII}(2, m)$, 
we have $\mathcal D(\alpha_{1}) = \mathcal D(\alpha_{m-1}) = \{ D_1 \}$, $\mathcal D(\alpha_2) = \{ D^+ \}$ and $\mathcal D(\alpha_{m-2}) = \{ D^- \}$. 
Moreover, all colors $D_1$, $D^+$, $D^-$ are of type \textsf{b}.
\end{lemma}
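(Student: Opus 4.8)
The plan is to treat the two assertions of the lemma separately. For the first — the identification $\mathcal D(\alpha_1)=\mathcal D(\alpha_{m-1})=\{D_1\}$, $\mathcal D(\alpha_2)=\{D^+\}$, $\mathcal D(\alpha_{m-2})=\{D^-\}$ — I would do nothing new: this is precisely the $r=2$ specialization of the description of the colors of $\SL_m(\mathbb{C})/S(\GL_r(\mathbb{C})\times\GL_{m-r}(\mathbb{C}))$ recalled just before the lemma, following \cite[Section 6.1]{Vust90}. Note that the statement implicitly takes $m\ge 5$ (so that $D^+\ne D^-$); the coincidence $\alpha_{m-2}=\alpha_2$ at $m=4$, where $D^+=D^-$, is outside this three-color setup.

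For the type assertion I would argue straight from Definition \ref{types of colors}: the color moved by $P_\alpha$ is of type \textsf{a} (resp.\ \textsf{2a}) exactly when $\alpha$ (resp.\ $2\alpha$) lies in $\Sigma$, the set of primitive generators in $\mathcal M$ of the extremal rays of $-\mathcal V^\vee$, and is of type \textsf{b} otherwise. By Proposition \ref{spherical data of symmetric space}, $\mathcal V$ is the negative restricted Weyl chamber, so $-\mathcal V^\vee$ is the cone in $\mathcal M\otimes\mathbb{R}$ generated by the positive restricted roots; its two extremal rays are the rays through the simple restricted roots, hence every element of $\Sigma$ is a positive real multiple of a simple restricted root. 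From Lemma \ref{restricted root system} together with Figure \ref{BC2}, the cone of positive restricted roots for $m\ge 5$ is generated by $\alpha_{1,m}-\alpha_{2,m-1}$ and $\alpha_{2,m-1}$, so (up to positive scalars) these two roots are the only possible members of $\Sigma$.

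It then remains to check that none of $\alpha_1,\alpha_2,\alpha_{m-2},\alpha_{m-1}$, and none of their doubles, is a positive multiple of $\alpha_{1,m}-\alpha_{2,m-1}$ or of $\alpha_{2,m-1}$. This I would settle by a direct coordinate comparison in the basis $\{\epsilon_i\}$ of $\mathfrak X(T)\otimes\mathbb{R}$ (with $\alpha_i=\epsilon_i-\epsilon_{i+1}$, $\alpha_{1,m}=\epsilon_1-\epsilon_m$, $\alpha_{2,m-1}=\epsilon_2-\epsilon_{m-1}$), using that for $m\ge 5$ the indices $1,2,3,m-1,m$ are pairwise distinct: for instance $\alpha_1=\epsilon_1-\epsilon_2$ has a nonzero $\epsilon_1$-coefficient and zero $\epsilon_m$-coefficient, whereas any multiple of $\alpha_{1,m}-\alpha_{2,m-1}=\epsilon_1-\epsilon_2+\epsilon_{m-1}-\epsilon_m$ has opposite $\epsilon_1$- and $\epsilon_m$-coefficients and any multiple of $\alpha_{2,m-1}$ has no $\epsilon_1$ at all; similarly $\alpha_2=\epsilon_2-\epsilon_3$ involves $\epsilon_3$, which neither simple restricted root does; and $\alpha_{m-1}=\epsilon_{m-1}-\epsilon_m$, $\alpha_{m-2}=\epsilon_{m-2}-\epsilon_{m-1}$ are handled symmetrically. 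Hence $\alpha_i,2\alpha_i\notin\Sigma$ for $i\in\{1,2,m-2,m-1\}$, so $D_1$, $D^+$, $D^-$ are all of type \textsf{b}. (Alternatively, since $\mathcal M=\mathfrak X(T/T\cap H)$ one has $\mathcal M\otimes\mathbb{R}=\mathrm{span}(\alpha_{1,m},\alpha_{2,m-1})$ by the explicit action of $\theta$ recorded in the proof of Lemma \ref{restricted root system}, and one observes $\theta(\alpha_1)=-\alpha_{m-1}\ne-\alpha_1$, $\theta(\alpha_2)=-\alpha_{3,m-1}\ne-\alpha_2$, etc., so these simple roots do not even lie in $\mathcal M\otimes\mathbb{R}\supseteq\Sigma$.) I do not expect a real obstacle here: once the simple restricted roots have been pinned down in Lemma \ref{restricted root system}, everything reduces to a transparent finite check, and the only subtlety worth flagging is the small-$m$ coincidence $\alpha_2=\alpha_{2,m-1}$ that is exactly why the conclusion is phrased for $m\ge 5$.
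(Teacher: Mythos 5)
Your proposal is correct and follows essentially the same route as the paper: the first assertion is taken from Vust's description of the colors, and the type-\textsf{b} claim is obtained by identifying $\Sigma=\{\alpha_{1,m}-\alpha_{2,m-1},\ \alpha_{2,m-1}\}$ from the valuation cone via Proposition~\ref{spherical data of symmetric space} and then checking that no simple root, nor twice a simple root, lies in $\Sigma$. Your explicit coordinate verification and your remark that the argument needs $m\ge 5$ (at $m=4$ one has $\alpha_2=\alpha_{2,3}$, so the color moved by $P_{\alpha_2}$ fails to be of type \textsf{b}) simply spell out, a bit more carefully than the paper does, the one-line final check in the paper's proof.
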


\begin{proof}
The first statement follows from \cite[Section 6.1]{Vust90}. 
As the valuation cone $\mathcal V$ of a symmetric homogeneous space is the negative restricted Weyl chamber determined by $-(\Phi_{\theta}^+)^{\vee}$ in $\mathcal N \otimes \mathbb Q$ 
by Proposition~\ref{spherical data of symmetric space}, 
$\mathcal V$ is the cone generated by two restricted coroots $\bar{\alpha}_{1,m}^{\vee}$ and $\bar{\alpha}_{1,m-1}^{\vee}$.  
Thus, the primitive generators in $\mathcal M$ of the extremal rays of $- \mathcal V^{\vee}$ are $\bar{\alpha}_{1,2}=\alpha_{1,m}-\alpha_{2,m-1}$ and $\bar{\alpha}_{2,3}=\alpha_{2,m-1}$, 
that is, $\Sigma = \{ \alpha_{1,m}-\alpha_{2,m-1}, \alpha_{2,m-1} \}$. 
For any simple root $\alpha$, neither $\alpha$ nor $2 \alpha$ belongs to $\Sigma$, which means that all colors are of type \textsf{b}. 
\end{proof}

\subsection{Wonderful compactifications of symmetric homogeneous spaces of type $\textup{AIII}(2, m)$} 

Wonderful varieties are $G$-varieties where the orbit structure is particularly nice. 
These varieties first appeared as compactifications with remarkable properties of symmetric homogeneous spaces in the work of De~Concini and Procesi~\cite{dCP83}, and were generalized by Luna~\cite{Luna96}. 

\begin{definition}
\label{Wonderful compactification}
An irreducible $G$-variety is called \emph{wonderful} of rank $r$ if it is smooth, complete, and it has exactly $r$ prime $G$-stable divisors such that they intersect transversally and their intersection is a (non-empty) single $G$-orbit. 
\end{definition}

By definition, a wonderful $G$-variety of rank $r$ has $2^r$ $G$-orbits; among them exactly one is open, and exactly one is closed. 
Note that any wonderful variety is spherical from \cite{Luna96}.  
In fact, wonderful varieties play a crucial role in the theory of spherical varieties; see \cite{Luna01, BP16, Pezzini18} for details. 

\begin{example}
(1) For a maximal torus $T \subset \SL_2(\mathbb C)$, the wonderful compactification of the symmetric space $\SL_2(\mathbb C)/T$ is exactly $\mathbb P^1 \times \mathbb P^1$. 

(2) The projective plane $\mathbb P^2 = \mathbb P(\Sym^2(\mathbb C^2))$ under the action of $\SL_2(\mathbb C)$ is a wonderful variety of rank 1, that is, the wonderful compactification of the symmetric space $\SL_2(\mathbb C)/N_{\SL_2}(T)$. 

(3) A classical example studied by De~Concini and Procesi~\cite{dCP83} is the variety of \emph{complete conics} $X = \{ ([A], [B]) \in \mathbb P(Mat_{3 \times 3}) \times \mathbb P(Mat_{3 \times 3}) : AB \in \mathbb C I_{3 \times 3} \}$. 
Then $X$ is the wonderful compactification of the homogeneous space $\SL_3(\mathbb C)/\SO_3(\mathbb C) Z(\SL_3(\mathbb C))$, which is the space of smooth conics in $\mathbb P^2$. 
Since $X$ has the two $\SL_3$-stable prime divisors consisting of non-invertible matrices, $X$ is wonderful of rank 2. 
\end{example}

For $m \geq 5$, the wonderful compactification of the symmetric space $\SL_m(\mathbb C)/S(\GL_2(\mathbb C) \times \GL_{m-2}(\mathbb C))$ is the blow-up of the product of Grassmannians $\Gr(2, m) \times \Gr(m-2, m)$ along the closed orbit $\mathcal{F \ell}(2, m-2; m)$. 
In the case of $m=4$, the symmetric homogeneous space $\SL_4(\mathbb C)/N_{\SL_4(\mathbb C)}(S(\GL_2(\mathbb C) \times \GL_2(\mathbb C)))$ admits the wonderful compactification. 
As the rank of the symmetric space is two, its wonderful compactification has the two $G$-stable divisors $Y_1, Y_2$. 

\begin{proposition} 
\label{anticanonical divisor}
Let $X_m$ be the wonderful compactification of the symmetric homogeneous space of type $\textup{AIII}(2, m)$ for $m \geq  4$. 
If $m \geq 5$, then there is a $B$-semi-invariant section of the anticanonical line bundle $K_{X_m}^{-1}$ with Weil divisor $-K_{X_m} = Y_1 + Y_2 + 2 D_1 + (m-3)D^+ + (m-3) D^-$, and the $B$-weight of this section is equal to $(m-1) \alpha_{1,m} + (m-3) \alpha_{2,m-1}$. 
Moreover, for the wonderful compactification $X_4$ of the symmetric homogeneous space of type $\textup{AIII}(2, 4)$, we have $-K_{X_4} = Y_1 + Y_2 + 2 D_1 + D^+$. 
\end{proposition}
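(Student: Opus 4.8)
The plan is to apply Proposition~\ref{expression of anticanonical divisor} directly, using the combinatorial data about colors assembled in Lemmas~\ref{restricted root system}, \ref{2rho}, and \ref{type of colors}. By Proposition~\ref{expression of anticanonical divisor} there is a $B$-semi-invariant section $s$ of $K_{X_m}^{-1}$ with $\operatorname{div}(s) = \sum_i m_i D_i + \sum_j E_j$; here the $G$-stable divisors are precisely the two $G$-stable prime divisors $Y_1, Y_2$ of the wonderful compactification (rank two), each appearing with coefficient $1$, and the colors are $D_1, D^+, D^-$ when $m \geq 5$ and $D_1, D^+$ when $m = 4$ (since $D^+ = D^-$ in that case), as recorded in Lemma~\ref{type of colors} and the discussion following Lemma~\ref{2rho}. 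By Lemma~\ref{type of colors} all colors are of type \textsf{b}, so the relevant formula is $m_i = \langle \alpha^{\vee}, \xi \rangle$ whenever $D_i \in \mathfrak D(\alpha)$.

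Next I would identify the $B$-weight $\xi$. Since $X_m$ is a symmetric variety for the involution $\theta$ of type $\textup{AIII}(2,m)$, the proof of Corollary~\ref{moment polytope} tells us $\xi = 2\rho_{\theta}$, and Lemma~\ref{2rho} gives $2\rho_{\theta} = (m-1)\alpha_{1,m} + (m-3)\alpha_{2,m-1}$; this is exactly the asserted $B$-weight. It then remains to evaluate $\langle \alpha^{\vee}, 2\rho_{\theta}\rangle$ for the simple roots $\alpha$ moving each color. By Lemma~\ref{type of colors}, $D_1 \in \mathfrak D(\alpha_1) \cap \mathfrak D(\alpha_{m-1})$, $D^+ \in \mathfrak D(\alpha_2)$, and $D^- \in \mathfrak D(\alpha_{m-2})$. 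So I need the pairings $\langle \alpha_1^{\vee}, 2\rho_{\theta}\rangle$, $\langle \alpha_2^{\vee}, 2\rho_{\theta}\rangle$, and $\langle \alpha_{m-2}^{\vee}, 2\rho_{\theta}\rangle$. Writing $2\rho_{\theta}$ as a sum of simple roots $\alpha_1, \ldots, \alpha_{m-1}$ via the expansions $\alpha_{1,m} = \alpha_1 + \cdots + \alpha_{m-1}$ and $\alpha_{2,m-1} = \alpha_2 + \cdots + \alpha_{m-2}$, one pairs against the simple coroots using the $\mathsf{A}_{m-1}$ Cartan matrix. A short computation gives $\langle \alpha_1^{\vee}, 2\rho_{\theta}\rangle = (m-1)\cdot 2 - (m-1) = \ldots$ hmm, more carefully: $\langle \alpha_1^{\vee}, \alpha_{1,m}\rangle = 2 - 1 = 1$ and $\langle \alpha_1^{\vee}, \alpha_{2,m-1}\rangle = -1 + 0 = $ ... in fact $\langle \alpha_1^\vee, \alpha_{2,m-1}\rangle$: since $\alpha_{2,m-1}=\alpha_2+\cdots+\alpha_{m-2}$ only $\alpha_2$ is adjacent to $\alpha_1$, so this is $-1$; then $\langle \alpha_1^{\vee}, 2\rho_{\theta}\rangle = (m-1)\cdot 1 + (m-3)\cdot(-1) = 2$, confirming $m_{D_1} = 2$. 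Similarly $\langle \alpha_2^{\vee}, 2\rho_{\theta}\rangle$: $\langle\alpha_2^\vee, \alpha_{1,m}\rangle = 0$ (the coefficients of $\alpha_1,\alpha_2,\alpha_3$ in $\alpha_{1,m}$ are all $1$, giving $-1+2-1=0$) and $\langle\alpha_2^\vee,\alpha_{2,m-1}\rangle = 2-1 = 1$ (coefficients of $\alpha_1,\alpha_2,\alpha_3$ are $0,1,1$), so $m_{D^+} = (m-1)\cdot 0 + (m-3)\cdot 1 = m-3$; by the symmetry of the Dynkin diagram and of $2\rho_\theta$ (which is invariant under $\alpha_i \leftrightarrow \alpha_{m-i}$), the same holds for $D^-$ via $\alpha_{m-2}$.

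Finally, assembling: for $m \geq 5$, $-K_{X_m} = Y_1 + Y_2 + 2D_1 + (m-3)D^+ + (m-3)D^-$ with $B$-weight $(m-1)\alpha_{1,m} + (m-3)\alpha_{2,m-1}$; for $m = 4$, the colors $D^+$ and $D^-$ coincide, $m-3 = 1$, so $-K_{X_4} = Y_1 + Y_2 + 2D_1 + D^+$. I expect no serious obstacle here: the only point requiring a little care is correctly identifying the $G$-stable divisors as $Y_1, Y_2$ (each with multiplicity one), which follows from the definition of wonderful compactification of rank two together with the structure theory in Proposition~\ref{expression of anticanonical divisor}, and then being careful with the bookkeeping of which simple root moves which color and the sign conventions in the Cartan pairing. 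The $m=4$ case also implicitly uses that the relevant homogeneous space is $\SL_4(\mathbb{C})/N_{\SL_4(\mathbb{C})}(S(\GL_2 \times \GL_2))$ rather than $\SL_4/G^\theta$, but since passing to the normalizer does not change the colored-fan data used here beyond the identification $D^+ = D^-$, the computation of multiplicities is unaffected.
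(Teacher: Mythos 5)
Your argument for $m \geq 5$ is correct and is essentially the paper's own proof: the paper invokes the Gagliardi--Hofscheier results that are restated as Proposition~\ref{expression of anticanonical divisor}, identifies $\xi = 2\rho_{\theta} = (m-1)\alpha_{1,m}+(m-3)\alpha_{2,m-1}$ via Lemma~\ref{2rho}, and evaluates $\langle \alpha_1^{\vee}, 2\rho_{\theta}\rangle = 2$, $\langle \alpha_2^{\vee}, 2\rho_{\theta}\rangle = \langle \alpha_{m-2}^{\vee}, 2\rho_{\theta}\rangle = m-3$ using that all colors are of type \textsf{b}; your Cartan-matrix computations reproduce exactly these pairings.

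The case $m=4$, however, contains a genuine gap. Your pairing $\langle \alpha_2^{\vee}, \alpha_{2,m-1}\rangle = 1$ uses that $\alpha_3$ occurs in $\alpha_{2,m-1}$ with coefficient $1$, which requires $m \geq 5$; for $m=4$ one has $\alpha_{2,3} = \alpha_2$, so $\langle \alpha_2^{\vee}, \alpha_{2,3}\rangle = 2$ and the type-\textsf{b} formula would give $\langle \alpha_2^{\vee}, 2\rho_{\theta}\rangle = \langle \alpha_2^{\vee}, 3\alpha_{1,4}+\alpha_{2,3}\rangle = 2$, not the value $m-3=1$ that you obtain by substituting $m=4$ into a formula derived only for $m\geq 5$. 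The correct coefficient $1$ does not come from the type-\textsf{b} rule at all: for the space $\SL_4(\mathbb C)/N_{\SL_4(\mathbb C)}(S(\GL_2\times\GL_2))$ whose wonderful compactification is $X_4$, passing to the normalizer replaces the lattice $\mathcal M$ by the restricted root lattice, so the spherical roots become $\Sigma=\{\alpha_1+\alpha_3,\, 2\alpha_2\}$; since $2\alpha_2 = 2\alpha_{2,3}\in\Sigma$, the color $D^+=D^-$ moved by $\alpha_2$ is of type \textsf{2a} (and even for $\SL_4/G^{\theta}$ it would be of type \textsf{a}, as $\alpha_2=\alpha_{2,3}\in\Sigma$ there), and Proposition~\ref{expression of anticanonical divisor} then assigns it coefficient $1$ directly, while $D_1$ remains of type \textsf{b} with coefficient $\langle\alpha_1^{\vee}, 3\alpha_{1,4}+\alpha_{2,3}\rangle = 2$. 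In particular your closing remark that ``passing to the normalizer does not change the colored-fan data used here beyond the identification $D^+=D^-$, [so] the computation of multiplicities is unaffected'' is exactly where the argument fails: the change of lattice changes $\Sigma$, hence the type of the color, hence the multiplicity rule. (The paper's own one-line treatment of $m=4$ via Lemma~\ref{type of colors} is also terse on this point, but a correct proof must address the type change; as written, your reasoning would yield $-K_{X_4}=Y_1+Y_2+2D_1+2D^+$.)
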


\begin{proof}
First, let us consider the case $m \geq 5$. 
By \cite[Theorem 1.2]{GH15}, 
there is a $B$-semi-invariant section $s$ of the anticanonical line bundle $K_{X_m}^{-1}$ with Weil divisor $\text{div}(s) = Y_1 + Y_2 + m_1 D_1 + m^+ D^+ + m^- D^-$ and its $B$-weight is equal to $2\rho_{\theta} = (m-1) \alpha_{1,m} + (m-3) \alpha_{2,m-1}$ from Lemma~\ref{2rho}. 
As each color is of type \textsf{b} from Lemma~\ref{type of colors}, we obtain the coefficients
\begin{align*}
& m_1 = \langle \alpha_{1}^{\vee}, 2\rho_{\theta} \rangle = \langle \alpha_{m-1}^{\vee}, 2\rho_{\theta} \rangle = 2, \\
& m^+ = \langle \alpha_2^{\vee}, 2\rho_{\theta} \rangle = m-3, \\
& m^- = \langle \alpha_{m-2}^{\vee}, 2\rho_{\theta} \rangle = m-3 
\end{align*}
by \cite[Theorem 1.5]{GH15}. 
Thus, we get $-K_{X_m} = Y_1 + Y_2 + 2 D_1 + (m-3)D^+ + (m-3) D^-$. 

Similarly, as $-K_{X_4} = Y_1 + Y_2 + m_1 D_1 + m^+ D^+$, Lemmas~\ref{2rho} and \ref{type of colors} imply the result. 
\end{proof}

Two real roots $\alpha_{1,m}$ and $\alpha_{2,m-1}$ generate the spherical weight lattice $\mathcal M$.

\begin{proposition} 
\label{moment polytope_AIII}
Let $X_m$ be the wonderful compactification of the symmetric homogeneous space of type $\textup{AIII}(2, m)$ for $m \geq  4$. 
The moment polytope $\Delta_m = \Delta(X_m, K_{X_m}^{-1})$ of the anticanonical line bundle $K_{X_m}^{-1}$ is the convex hull of four points $0$, $m \alpha_{1,m}$, $m \alpha_{1,m} + (m-3) \alpha_{2,m-1}$, $(m - \frac{3}{2}) \alpha_{1,m} + (m - \frac{3}{2}) \alpha_{2,m-1}$ in $\mathcal M \otimes \mathbb R$. 
\end{proposition}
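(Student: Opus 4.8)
The plan is to apply Corollary~\ref{moment polytope} to the wonderful compactification $X_m$, using the data assembled in Propositions~\ref{anticanonical divisor}, \ref{spherical data of symmetric space}, Lemma~\ref{restricted root system}, Lemma~\ref{2rho} and Lemma~\ref{type of colors}. First I would verify the hypotheses of Corollary~\ref{moment polytope}: by Lemma~\ref{type of colors} all colors $D_1, D^+, D^-$ are of type \textsf{b}, and the compatibility condition $\langle \rho(D_i), 2\rho_\theta \rangle = \langle \alpha^\vee, 2\rho_\theta \rangle$ for $D_i \in \mathfrak D(\alpha)$ follows from the fact that $\rho(D_i)$ is exactly the simple restricted coroot dual to the restricted root $\bar\alpha$ (Proposition~\ref{spherical data of symmetric space}) together with the coefficient computations $m_1 = 2$, $m^\pm = m-3$ already carried out in the proof of Proposition~\ref{anticanonical divisor}. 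Hence $\Delta_m$ is the intersection of the positive restricted Weyl chamber with the half-spaces $\{ m : \langle \hat\rho(Y_j), m - 2\rho_\theta \rangle \geq -1 \}$ for $j = 1, 2$, where $Y_1, Y_2$ are the two $G$-stable divisors of the wonderful compactification.

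Next I would make everything explicit in the coordinates $(x, y)$ where a point is written $x\,\alpha_{1,m} + y\,\alpha_{2,m-1}$. The positive restricted Weyl chamber, described by the simple restricted coroots from Proposition~\ref{spherical data of symmetric space} and the root data in Lemma~\ref{restricted root system}, should come out to $\{ y \geq 0,\ x \geq y \}$ (these are the two inequalities $\langle \bar\alpha^\vee_{1,2}, m\rangle \geq 0$ and $\langle \bar\alpha^\vee_{2,3}, m\rangle \geq 0$ appearing in the proof of Corollary~\ref{moment polytope}). For a wonderful variety of rank $2$ the two $G$-stable divisors $Y_1, Y_2$ are the colored cones whose images $\hat\rho(Y_1), \hat\rho(Y_2)$ are the primitive generators of the two extremal rays of the valuation cone $\mathcal V$; by Lemma~\ref{type of colors} these correspond to the spherical roots $\Sigma = \{\alpha_{1,m} - \alpha_{2,m-1},\ \alpha_{2,m-1}\}$, so after identifying $\mathcal N$-elements via the pairing with $\mathcal M$, the two half-space inequalities $\langle \hat\rho(Y_j), m - 2\rho_\theta\rangle \geq -1$ become two affine inequalities in $(x,y)$ with $2\rho_\theta = (m-1, m-3)$ by Lemma~\ref{2rho}. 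Intersecting the two chamber inequalities with the two half-space inequalities gives a quadrilateral, and I would then identify its four vertices, checking they are $0$, $(m, 0)$, $(m, m-3)$, and $(m - \tfrac32, m - \tfrac32)$ by solving the relevant pairs of linear equations.

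The bookkeeping obstacle — and the step where I would be most careful — is pinning down precisely which primitive elements of $\mathcal N$ the valuations $\hat\rho(Y_1)$ and $\hat\rho(Y_2)$ are, and hence the exact normalization of the two half-space inequalities. The valuation cone $\mathcal V$ is the negative restricted Weyl chamber $-(\Phi_\theta^+)^\vee$; its extremal rays are spanned by the negatives of the fundamental coweights of the restricted root system, but one must take the \emph{primitive} lattice generators and correctly account for the $\mathsf{BC}_2$ versus $\mathsf{C}_2$ distinction (Lemma~\ref{restricted root system}) — in particular which restricted root is long. A clean cross-check that resolves any ambiguity is to confirm that the resulting quadrilateral is consistent with Proposition~\ref{moment polytope of spherical variety}: the dual description says $\Delta_m = 2\rho_\theta + Q_{X_m}^*$ where $Q_{X_m}$ is the convex hull of $\rho(D_i)/m_i$ and $\hat\rho(Y_j)$, and computing $Q_{X_m}$ from $\rho(D_1) = \bar\alpha^\vee_{1,2}$, $\rho(D^\pm) = \bar\alpha^\vee_{2,3}$ with $m_1 = 2$, $m^\pm = m - 3$ together with the two valuations, then dualizing and translating by $2\rho_\theta$, must yield the same four vertices; agreement of the two independent routes confirms the normalizations are right. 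Finally I would note that for $m = 4$ the divisor is $-K_{X_4} = Y_1 + Y_2 + 2D_1 + D^+$ (only one of $D^+, D^-$, since $D^+ = D^-$ there), but substituting $m = 4$, i.e. $m - 3 = 1$, into the general formulas gives the same quadrilateral with vertices $0$, $(4,0)$, $(4,1)$, $(\tfrac52, \tfrac52)$, so the single statement covers both cases uniformly.
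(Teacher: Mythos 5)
Your proposal is correct and follows essentially the same route as the paper: verify the hypotheses of Corollary~\ref{moment polytope} via Lemmas~\ref{2rho} and \ref{type of colors} and Proposition~\ref{anticanonical divisor}, then cut the positive restricted Weyl chamber $\{y \geq 0,\ x \geq y\}$ by the half-spaces $x \leq m$ and $x + y \leq 2m-3$ coming from $\hat\rho(Y_1) = -\tfrac12\alpha_{1,m}^{\vee}$ and $\hat\rho(Y_2) = -\tfrac12\alpha_{1,m}^{\vee} - \tfrac12\alpha_{2,m-1}^{\vee}$, and read off the four vertices. The only cosmetic difference is that the paper takes the images of $Y_1, Y_2$ (and of the colors) directly from Vust's description, whereas you recover them as the primitive generators of the extremal rays of the valuation cone and add a dual-polytope cross-check, which yields the same normalization.
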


\begin{proof}
From the description in \cite[Section 6.1]{Vust90}, 
we know that the three colors $D_1, D^+, D^-$ and the $G$-stable divisors $Y_1, Y_2$ in $X_m$
have the images 
\begin{align*}
& \rho(D_1) = \bar{\alpha}_{1, 2}^{\vee} = \frac{1}{2} \alpha_{1,m}^{\vee}- \frac{1}{2} \alpha_{2,m-1}^{\vee}, \\
& \rho(D^+) = \rho(D^-) = \bar{\alpha}_{2, m-1}^{\vee} = \frac{1}{2} \alpha_{2,m-1}^{\vee}, \\
& \hat{\rho}(Y_1) =  - \bar{\alpha}_{1, m-1}^{\vee} = - \frac{1}{2} \alpha_{1,m}^{\vee}, \\ 
& \hat{\rho}(Y_2) =  - \bar{\alpha}_{1, m}^{\vee} = - \frac{1}{2} \alpha_{1,m}^{\vee} - \frac{1}{2} \alpha_{2,m-1}^{\vee}
\end{align*}
in $\mathcal N$, respectively.

Using Corollary \ref{moment polytope} and Proposition~\ref{anticanonical divisor}, 
$\frac{1}{2} \rho(D_1)$, $\frac{1}{m-3} \rho(D^+) = \frac{1}{m-3} \rho(D^-)$, $\hat{\rho}(Y_1)$ and $\hat{\rho}(Y_2)$ are used as inward-pointing facet normal vectors of the moment polytope $\Delta(X_m, K_{X_m}^{-1})$. 
First, $\frac{1}{m-3} \rho(D^+) = \frac{1}{m-3} \rho(D^-) = \frac{1}{2(m-3)} \alpha_{2,m-1}^{\vee} $ gives an inequality 
\[ 
\left\langle \frac{1}{2(m-3)} \alpha_{2,m-1}^{\vee}, x \alpha_{1,m} + y \alpha_{2,m-1} - 2 \rho_{\theta} \right\rangle = \frac{1}{m-3} \{ y - (m-3) \} \geq -1
\]  
because $2 \rho_{\theta} = (m-1) \alpha_{1,m} + (m-3) \alpha_{2,m-1}$. 
Similarly, as $\frac{1}{2} \rho(D_1) = \frac{1}{4} \alpha_{1,m}^{\vee}- \frac{1}{4} \alpha_{2,m-1}^{\vee}$ gives an inequality 
\[ 
\left\langle \frac{1}{4} \alpha_{1,m}^{\vee}- \frac{1}{4} \alpha_{2,m-1}^{\vee}, x \alpha_{1,m} + y \alpha_{2,m-1} - 2 \rho_{\theta} \right\rangle = \frac{1}{2} [ x-(m-1) - \{ y - (m-3) \}]  \geq -1, 
\]  
from which we get a domain $\{ x \alpha_{1,m} + y \alpha_{2,m-1} \in \mathcal M \otimes \mathbb R : x \geq y \}$. 
Thus, the images of two colors $D_1, D_2$ determine the positive restricted Weyl chamber. 
Next, for two $G$-stable divisors $\hat{\rho}(Y_1) = - \frac{1}{2} \alpha_{1,m}^{\vee}$ and $\hat{\rho}(Y_2) = - \frac{1}{2} \alpha_{1,m}^{\vee} - \frac{1}{2} \alpha_{2,m-1}^{\vee}$ give inequalities 
\[ 
\left\langle - \frac{1}{2} \alpha_{1,m}^{\vee}, x \alpha_{1,m} + y \alpha_{2,m-1} - 2 \rho_{\theta} \right\rangle = - \{ x-(m-1) \}  \geq -1 
\]  
and 
\[ 
\left\langle - \frac{1}{2} \alpha_{1,m}^{\vee} - \frac{1}{2} \alpha_{2,m-1}^{\vee}, x \alpha_{1,m} + y \alpha_{2,m-1} - 2 \rho_{\theta} \right\rangle = - \{ x-(m-1) \} - \{ y-(m-3) \}  \geq -1. 
\]  
Therefore, the moment polytope $\Delta(X_m, K_{X_m}^{-1})$ is the intersection of the positive restricted Weyl chamber and two half-spaces 
$\{ x \alpha_{1,m} + y \alpha_{2,m-1} \in \mathcal M \otimes \mathbb R : x \leq m \}$,  
$\{ x \alpha_{1,m} + y \alpha_{2,m-1} \in \mathcal M \otimes \mathbb R : x+y \leq 2m-3 \}$.
\end{proof}

For $p = x \alpha_{1,m} + y \alpha_{2,m-1} \in \mathcal M \otimes \mathbb R$, the Duistermaat--Heckman measure on $\mathcal M \otimes \mathbb R$ is given as   
\begin{align*}
\prod_{\alpha \in \Phi^+ \backslash \Phi^{\theta}} \kappa(\alpha, p) \, dp 
&= x^{2m-8} (2x) y^{2m-8} (2y) (x+y)^2 (x-y)^2 \, dxdy = 2^2 x^{2m-7} y^{2m-7} (x+y)^2 (x-y)^2 \, dxdy \\
&=: P_{DH}(x, y) dx dy 
\end{align*}
up to a multiplicative constant. 

\begin{figure}
 \begin{minipage}[b]{0.45 \textwidth}
 \centering

\begin{tikzpicture}
\clip (-1.3,-1.5) rectangle (7.3, 5.3); 

\coordinate (a1) at (1,-1);
\coordinate (a2) at (0,1);
\coordinate (a3) at ($(a1)+(a2)$);
\coordinate (a4) at ($2*(a3)$);
\coordinate (a5) at ($(a3)+(a2)$);
\coordinate (a6) at ($2*(a2)$);

\coordinate (v1) at (5,0);
\coordinate (v2) at (5,2);
\coordinate (v3) at (3.5, 3.5);

\coordinate (barycenter) at (4.37464513080842, 1.93589423571311);

\coordinate (Origin) at (0,0);
\coordinate (asum) at ($(a1)+(a2)$);
\coordinate (2rho) at (4,2);

\foreach \x  in {-8,-7,...,12}{
  \draw[help lines,dashed]
    (\x,-8) -- (\x,11)
    (-8,\x) -- (11,\x) 
     [rotate=45] (1.414*\x/2,-8) -- (1.414*\x/2,12) ;
}

\foreach \x  in {-8,-7,...,12}{
  \draw[help lines,dashed]
     [rotate=135] (1.414*\x/2,-8) -- (1.414*\x/2,12) ;
}

\fill (Origin) circle (2pt) node[below left] {0};

\fill (a1) circle (2pt) node[below] {$\alpha_{1,5} - \alpha_{2,4}$};
\fill (a2) circle (2pt) node[left] {$\alpha_{2,4}$};
\fill (a3) circle (2pt) node[below] {$\alpha_{1,5}$};
\fill (a4) circle (2pt) node[below right] {$2\alpha_{1,5}$};
\fill (a5) circle (2pt) node[below right] {$\alpha_{1,5} + \alpha_{2,4}$};
\fill (a6) circle (2pt) node[left] {$2 \alpha_{2,4}$};

\fill (2rho) circle (2pt) node[below left] {$2\rho_{\theta}$};

\fill (v1) circle (2pt) node[below] {$5 \alpha_{1, 5}$};
\fill (v2) circle (2pt) node[above right] {$5 \alpha_{1, 5} + 2 \alpha_{2, 4}$};
\fill (v3) circle (2pt) node[above left] {$\frac{7}{2} \alpha_{1, 5} + \frac{7}{2} \alpha_{2, 4}$};

\fill (barycenter) circle (2pt) node[below right] {$\textbf{bar}_{DH}(\Delta_5)$};

\draw[->,,thick](Origin)--(a1);
\draw[->,,thick](Origin)--(a2);
\draw[->,,thick](Origin)--(a3); 
\draw[->,,thick](Origin)--(a4);
\draw[->,,thick](Origin)--(a5);
\draw[->,,thick](Origin)--(a6); 

\draw[thick,gray](Origin)--(v1);
\draw[thick,gray](Origin)--(v3);
\draw[thick,gray](v1)--(v2);
\draw[thick,gray](v2)--(v3);

\draw [shorten >=-4cm, red, thick, dashed] (2rho) to ($(2rho)+(a1)$);
\draw [shorten >=-4cm, red, thick, dashed] (2rho) to ($(2rho)+(a2)$);
\end{tikzpicture} 

\caption{$\Delta_5=\Delta(X_5,K^{-1}_{X_5})$}
\label{Delta_5}
\end{minipage}

 \begin{minipage}[b]{.45 \textwidth}
 \centering

\begin{tikzpicture}
\clip (-1.3,-1.5) rectangle (8.3, 5.8); 

\coordinate (a1) at (1,-1);
\coordinate (a2) at (0,1);
\coordinate (a3) at ($(a1)+(a2)$);
\coordinate (a4) at ($2*(a3)$);
\coordinate (a5) at ($(a3)+(a2)$);
\coordinate (a6) at ($2*(a2)$);

\coordinate (v1) at (6,0);
\coordinate (v2) at (6,3);
\coordinate (v3) at (4.5, 4.5);

\coordinate (barycenter) at (5.40097490765179, 2.90084271414974);

\coordinate (Origin) at (0,0);
\coordinate (asum) at ($(a1)+(a2)$);
\coordinate (2rho) at (5,3);

\foreach \x  in {-8,-7,...,12}{
  \draw[help lines,dashed]
    (\x,-8) -- (\x,11)
    (-8,\x) -- (11,\x) 
     [rotate=45] (1.414*\x/2,-8) -- (1.414*\x/2,12) ;
}

\foreach \x  in {-8,-7,...,12}{
  \draw[help lines,dashed]
     [rotate=135] (1.414*\x/2,-8) -- (1.414*\x/2,12) ;
}

\fill (Origin) circle (2pt) node[below left] {0};

\fill (a1) circle (2pt) node[below] {$\alpha_{1,6} - \alpha_{2,5}$};
\fill (a2) circle (2pt) node[left] {$\alpha_{2,5}$};
\fill (a3) circle (2pt) node[below] {$\alpha_{1,6}$};
\fill (a4) circle (2pt) node[below right] {$2\alpha_{1,6}$};
\fill (a5) circle (2pt) node[below right] {$\alpha_{1,6} + \alpha_{2,5}$};
\fill (a6) circle (2pt) node[left] {$2 \alpha_{2,5}$};

\fill (2rho) circle (2pt) node[below left] {$2\rho_{\theta}$};

\fill (v1) circle (2pt) node[below] {$6 \alpha_{1, 6}$};
\fill (v2) circle (2pt) node[above right] {$6 \alpha_{1, 6} + 3 \alpha_{2, 5}$};
\fill (v3) circle (2pt) node[above left] {$\frac{9}{2} \alpha_{1, 6} + \frac{9}{2} \alpha_{2, 5}$};

\fill (barycenter) circle (2pt) node[below right] {$\textbf{bar}_{DH}(\Delta_6)$};

\draw[->,,thick](Origin)--(a1);
\draw[->,,thick](Origin)--(a2);
\draw[->,,thick](Origin)--(a3); 
\draw[->,,thick](Origin)--(a4);
\draw[->,,thick](Origin)--(a5);
\draw[->,,thick](Origin)--(a6); 

\draw[thick,gray](Origin)--(v1);
\draw[thick,gray](Origin)--(v3);
\draw[thick,gray](v1)--(v2);
\draw[thick,gray](v2)--(v3);

\draw [shorten >=-4cm, red, thick, dashed] (2rho) to ($(2rho)+(a1)$);
\draw [shorten >=-4cm, red, thick, dashed] (2rho) to ($(2rho)+(a2)$);
\end{tikzpicture} 

\caption{$\Delta_6=\Delta(X_6,K^{-1}_{X_6})$}
\label{Delta_6}
\end{minipage}
\end{figure}

\begin{example}
In the case of $m=5$, we compute the volume of the moment polytope $\Delta_5$ 
\begin{align*}
\text{Vol}_{DH}(\Delta_5) &= 
\displaystyle \int_{0}^{\frac{7}{2}} \int_{0}^{x} 4 x^{3} y^{3} (x+y)^2 (x-y)^2 \, dydx
+ \displaystyle \int_{\frac{7}{2}}^{5} \int_{0}^{7-x} 4 x^{3} y^{3} (x+y)^2 (x-y)^2 \, dydx
\\
& = \frac{1}{72}\left(\frac{7}{2}\right)^{12} + \left[ \frac{1}{72} x^{12} - \frac{5488}{27} x^9 + 4802 x^8 - 48020 x^7 + \frac{2235331}{9} x^6 - \frac{3294172}{5} x^5 + \frac{5764801}{8} x^4 \right]_{\frac{7}{2}}^{5} \\
& = \frac{391880669}{360}  
\end{align*}
and the barycenter of $\Delta_5$ with respect to the Duistermaat--Heckman measure 
\begin{align*}
\textbf{bar}_{DH}(\Delta_5)  
& = (\bar{x}, \bar{y}) 
= \frac{1}{\text{Vol}_{DH}(\Delta_5)} \left( \displaystyle \int_{\Delta_5} x \prod_{\alpha \in \Phi^+ \backslash \Phi^{\theta}} \kappa(\alpha, p) \, dp , \displaystyle \int_{\Delta_5} y \prod_{\alpha \in \Phi^+ \backslash \Phi^{\theta}} \kappa(\alpha, p) \, dp \right) \\
& = \left(\frac{1426329931935}{326044716608}, \frac{4418316612263}{2282313016256} \right) \approx (4.37, 1.94). 
\end{align*} 
Since $\bar{x}>4$ and $\bar{x} + \bar{y} > 6$, $\textbf{bar}_{DH}(\Delta_5)$ is in the relative interior of the translated cone $2 \rho_{\theta} + \mathcal C^+_{\theta}$ (see Figure~\ref{Delta_5}).  
Therefore, the wonderful compactification $X_5$ of the symmetric homogeneous space $\SL_5(\mathbb C)/S(\GL_2(\mathbb C) \times \GL_3(\mathbb C))$ admits a K\"{a}hler--Einstein metric by Proposition~\ref{criterion}.

Similarly, we obtain the barycenter of the moment polytope $\Delta_6$ with respect to the Duistermaat--Heckman measure 
\begin{align*}
\textbf{bar}_{DH}(\Delta_6)  
& = (\bar{x}, \bar{y}) = \left(\frac{421619272419}{78063549568}, \frac{226450079005}{78063549568} \right) \approx (5.40, 2.90). 
\end{align*} 
Since $\bar{x}>5$ and $\bar{x} + \bar{y} > 8$, $\textbf{bar}_{DH}(\Delta_6)$ is in the relative interior of the translated cone $2 \rho_{\theta} + \mathcal C^+_{\theta}$ (see Figure~\ref{Delta_6}).  
Therefore, the wonderful compactification $X_6$ of the symmetric homogeneous space $\SL_6(\mathbb C)/S(\GL_2(\mathbb C) \times \GL_4(\mathbb C))$ admits a K\"{a}hler--Einstein metric by Proposition~\ref{criterion}. 
\qed
\end{example}

\begin{proposition} 
\label{barycenter}
Let $(\bar{x}, \bar{y})$ be the barycenter $\textup{\textbf{bar}}_{DH}(\Delta_m)$ of the moment polytope $\Delta(X_m, K^{-1}_{X_m})$ with respect to the Duistermaat--Heckman measure. 
Then we have $\bar{x} > m-1$ and $\bar{x} + \bar{y} > 2m-4$ for all $m\geq 4$. 
\end{proposition}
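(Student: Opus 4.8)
The plan is to turn both inequalities into positivity statements for explicit double integrals over $\Delta_m$, evaluate the bulk of each integral in closed form, and then compare the contribution coming from the region where $P_{DH}$ is concentrated against the rest. By Proposition~\ref{moment polytope_AIII} the moment polytope is $\Delta_m=\{(x,y):0\le y\le x\le m,\ x+y\le 2m-3\}$ and the Duistermaat--Heckman density is $P_{DH}(x,y)=4\,x^{2m-7}y^{2m-7}(x^2-y^2)^2$, using $(x+y)^2(x-y)^2=(x^2-y^2)^2$. Writing $(\bar x,\bar y)=\textbf{bar}_{DH}(\Delta_m)=\bigl(\tfrac1V\!\int_{\Delta_m}\!xP_{DH},\ \tfrac1V\!\int_{\Delta_m}\!yP_{DH}\bigr)$ with $V=\int_{\Delta_m}P_{DH}>0$, the claims $\bar x>m-1$ and $\bar x+\bar y>2m-4$ are equivalent to
$$\int_{\Delta_m}\bigl(x-(m-1)\bigr)P_{DH}\,dx\,dy>0\qquad\text{and}\qquad\int_{\Delta_m}\bigl(x+y-(2m-4)\bigr)P_{DH}\,dx\,dy>0 .$$
So it suffices to control the three moments $\int_{\Delta_m}P_{DH}$, $\int_{\Delta_m}xP_{DH}$, $\int_{\Delta_m}yP_{DH}$ with enough precision to fix these two signs.

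I would compute the moments by cutting $\Delta_m$ along $x=m-\tfrac32$ into the triangle $\Delta_m^{\mathrm I}=\{0\le y\le x\le m-\tfrac32\}$ and the trapezoid $\Delta_m^{\mathrm{II}}=\{m-\tfrac32\le x\le m,\ 0\le y\le 2m-3-x\}$. Put $n=2m-7$. On $\Delta_m^{\mathrm I}$ the inner $y$-integral is elementary: from $\tfrac1{k+1}-\tfrac2{k+3}+\tfrac1{k+5}=\tfrac8{(k+1)(k+3)(k+5)}$ one gets $\int_0^x y^{n}(x^2-y^2)^2\,dy=\tfrac{8x^{n+5}}{(n+1)(n+3)(n+5)}$ and $\int_0^x y^{n+1}(x^2-y^2)^2\,dy=\tfrac{8x^{n+6}}{(n+2)(n+4)(n+6)}$, so each contribution of $\Delta_m^{\mathrm I}$ is a rational multiple of a power of $m-\tfrac32$. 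On $\Delta_m^{\mathrm{II}}$ I change variable by $x=m-\tfrac32+s$ with $s\in[0,\tfrac32]$ (so that $2m-3-x=m-\tfrac32-s$); the inner $y$-integral is again a three-term polynomial in $x$ and in $2m-3-x$, and integrating it term by term over $s\in[0,\tfrac32]$ yields explicit, if unwieldy, closed forms for the three moments as combinations of powers of $m$, $m-\tfrac32$ and $m-3$.

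It remains to assemble these and check the two signs for all $m\ge4$. Equivalently one can cut $\Delta_m$ along $x=m-1$ (resp. along $x+y=2m-4$): the part with $x\ge m-1$ (resp. $x+y\ge2m-4$) has positive integrand and lies exactly where the factors $x^{2m-7}y^{2m-7}$ make $P_{DH}$ concentrate, while on the complementary part the integrand has the opposite sign and is bounded above by enlarging the region to the triangle $\{0\le y\le x\le m-1\}$ (resp. $\{0\le y\le x,\ x+y\le2m-4\}$), on which the integral is elementary. The main obstacle is that these two competing pieces are of the same order of magnitude in $m$ — the mass of $x^{2m-7}y^{2m-7}$ piles up right against the cut-off edges $x=m$ and $x+y=2m-3$ — so the comparison must be genuinely quantitative. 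I would dispatch the finitely many small values of $m$ directly (for $m=5,6$ the exact rational barycenters above already do it, and likewise for $m=4$ and the next few), and for large $m$ use the rescaling $x=m-\sigma$, $y=m-\tau$: on any bounded $(\sigma,\tau)$-range $(1-\sigma/m)^{2m-7}\to e^{-2\sigma}$ uniformly, the region $\sigma+\tau\ge m$ contributes only an $O(2^{-m})$ fraction of $V$, and the two barycenter conditions pass to the limiting density $e^{-2\sigma-2\tau}(\tau-\sigma)^2$ on $\{0\le\sigma\le\tau,\ \sigma+\tau\ge3\}$, for which a short gamma-integral computation gives a $\sigma$-barycenter $\tfrac{115}{244}<1$ and a $(\sigma+\tau)$-barycenter $\tfrac{230}{61}<4$ with strict inequality; making the threshold on $m$ explicit so that the leftover cases form a genuinely finite list is the one point that needs care.
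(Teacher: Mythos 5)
Your reduction to sign questions, your identification of the polytope, and your limiting computation are all correct: with $\sigma=m-x$, $\tau=m-y$ the rescaled density does converge to $e^{-2\sigma-2\tau}(\tau-\sigma)^2$ on $\{0\le\sigma\le\tau,\ \sigma+\tau\ge 3\}$, and I confirm the limiting barycenters $\bar\sigma_\infty=\tfrac{115}{244}<1$ and $\overline{(\sigma+\tau)}_\infty=\tfrac{230}{61}<4$. But as written this only proves the proposition for all \emph{sufficiently large} $m$ together with an \emph{undetermined} finite set of exceptions, and the missing step is exactly the hard quantitative content. To conclude for all $m\ge 4$ you must make the threshold effective: uniform two-sided bounds comparing $(1-\sigma/m)^{2m-7}(1-\tau/m)^{2m-7}(2m-\sigma-\tau)^2$ with $4m^2e^{-2\sigma-2\tau}$ on the relevant range, an explicit tail estimate for $\sigma+\tau\ge m$, and a verification that the resulting error is smaller than the limiting margins ($\approx 0.53$ for $\bar\sigma$ and only $\approx 0.23$ for $\sigma+\tau$). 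Without these constants the ``finitely many small values of $m$'' you propose to check directly is not a determined list, and experience with the companion estimate in this paper (where the analogous blow-up inequality required machine checking up to $m=41$ before the asymptotic regime kicked in) shows the threshold need not be small. A secondary overstatement: on the trapezoidal piece the outer integrals $\int_{m-3/2}^{m}x^{2m-3}(2m-3-x)^{2m-6}\,dx$ and their relatives are incomplete-Beta type and have no closed form with a bounded number of terms, so the promised ``explicit, if unwieldy, closed forms'' uniform in $m$ are not actually available; this is not fatal only because you abandon that route for the asymptotic one.

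For contrast, the paper avoids both issues by decomposing $\Delta_m$ \emph{conically} from the origin into $\Omega_1$ (over the edge $x=m$) and $\Omega_2$ (over the edge $x+y=2m-3$) and exploiting the homogeneity of $P_{DH}$: the radial integral factors out exactly, giving $\bar x_1=\tfrac{m(4m-8)}{4m-7}>m-1$ and $\bar x_2+\bar y_2=\tfrac{(2m-3)(4m-8)}{4m-7}>2m-4$ with no estimation at all, and the two remaining one-variable integral inequalities (Lemmas~\ref{inequality1} and~\ref{inequality2}) are settled by the crude pointwise bound $t(2m-3-t)>m(m-3)$ and an explicit degree-$9$ polynomial that is checked to be increasing and positive from $m=4$ on. That route is uniform in $m$ from the start, which is precisely what your argument still lacks; if you want to salvage your approach, you must either supply the effective error bounds described above or replace the limit argument by monotone comparisons valid for every $m\ge 4$.
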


\begin{proof}
From Proposition~\ref{moment polytope_AIII}, we can divide the moment polytope $\Delta_m$ into two regions $\Omega_1$ and $\Omega_2$: 
\begin{align*}
\Omega_1 & := \{ s (m, t) : 0 \leq s \leq 1, \, 0 \leq t \leq m-3 \}, \\
\Omega_2 & := \{ s (t, 2m-3-t) : 0 \leq s \leq 1, \, m - \frac{3}{2} \leq t \leq m \}. 
\end{align*}
Denoting $(\bar{x}_i, \bar{y}_i) = \textup{\textbf{bar}}_{DH}(\Omega_i)$ for each $i = 1, 2$, 
in order to prove $\bar{x} > m-1$ it suffices to show that $\bar{x}_i > m-1$ for each $i$. 
Indeed, $\bar{x}_1 > m-1$ and $\bar{x}_2 > m-1$ imply that $\bar{x} > m-1$ since 
\begin{align*}
\bar{x} & = \frac{1}{\text{Vol}_{DH}(\Delta_m)} \int_{\Delta_m} x \, P_{DH}(x, y) \, dx dy 
= \frac{1}{\text{Vol}_{DH}(\Omega_1 \cup \Omega_2)} \int_{\Omega_1 \cup \Omega_2} x \, P_{DH}(x, y) \, dx dy \\ 
& = \frac{\bar{x}_1 \text{Vol}_{DH}(\Omega_1) + \bar{x}_2 \text{Vol}_{DH}(\Omega_2)}{\text{Vol}_{DH}(\Omega_1) + \text{Vol}_{DH}(\Omega_2)}. 
\end{align*}

(i) Using the parametrization $x=ms, y=st$ of $\Omega_1$, we get the volume form 
\[
dx \wedge dy = d(ms) \wedge d(st) = m \, ds \wedge s \, dt = ms \, ds \wedge dt. 
\] 
Then we compute the volume of $\Omega_1$ with respect to the Duistermaat--Heckman measure: 
\begin{align*}
\text{Vol}_{DH}(\Omega_1) &= \int_{\Omega_1} P_{DH}(x, y) \, dx dy = \int_{\Omega_1} 2^2 x^{2m-7} y^{2m-7} (x+y)^2 (x-y)^2 \, dxdy \\
&= \int_0^1 \int_0^{m-3} 4 (ms)^{2m-7} (st)^{2m-7} \{s(m+t)\}^2 \{s(m-t)\}^2 \, ms \, dt ds \\
&= \int_0^1 \int_0^{m-3} 4 m^{2m-6} s^{4m-9} t^{2m-7} \{(m+t) (m-t)\}^2 \, dt ds \\
&= 4 m^{2m-6} \int_0^1 s^{4m-9} \, ds \int_0^{m-3} t^{2m-7} \{(m+t) (m-t)\}^2 \, dt \\
&= \frac{4 m^{2m-6}}{4m-8} \int_0^{m-3} t^{2m-7} (m+t)^2 (m-t)^2 \, dt. 
\end{align*}
Hence, when $m \geq 4$ we see that 
\begin{align*}
\bar{x}_1 &= \frac{1}{\text{Vol}_{DH}(\Omega_1)} \int_{\Omega_1} x \, P_{DH}(x, y) \, dx dy \\
&= \frac{1}{\text{Vol}_{DH}(\Omega_1)} \int_0^1 \int_0^{m-3} 4 m^{2m-5} s^{4m-8} t^{2m-7} \{(m+t) (m-t)\}^2 \, dt ds \\ 
&= \frac{m(4m-8)}{4m-7} > m-1. 
\end{align*}

(ii) Using the parametrization $x=st, y=s(2m-3-t)$ of $\Omega_2$, we get the volume form 
\begin{align*}
dx \wedge dy & = d(st) \wedge d(s(2m-3-t)) = (s \, dt + t \, ds) \wedge [(2m-3-t)ds + s(-dt)] \\
& = s(2m-3-t) \, dt \wedge ds -ts \, ds \wedge dt = s(2m-3) \, dt \wedge ds.  
\end{align*}
Then we compute the volume of $\Omega_2$ with respect to the Duistermaat--Heckman measure: 
\begin{align*}
\text{Vol}_{DH}(\Omega_2) &= \int_{\Omega_2} P_{DH}(x, y) \, dx dy = \int_{\Omega_2} 2^2 x^{2m-7} y^{2m-7} (x+y)^2 (x-y)^2 \, dxdy\\
&= \int_0^1 \int_{m-\frac{3}{2}}^{m} 4 (st)^{2m-7} \{s(2m-3-t)\}^{2m-7} \{s(2m-3)\}^2 \{s(2m-3-2t)\}^2 \, s(2m-3) \, dt ds \\
&= \int_0^1 \int_{m-\frac{3}{2}}^{m} 4 (2m-3)^3 s^{4m-9} t^{2m-7} (2m-3-t)^{2m-7} (2m-3-2t)^2 \, dt ds \\
&= 4 (2m-3)^3 \int_0^1 s^{4m-9} \, ds \int_{m-\frac{3}{2}}^{m} \{ t (2m-3-t)\}^{2m-7} (2m-3-2t)^2 \, dt \\
&= \frac{4 (2m-3)^3}{4m-8} \int_{m-\frac{3}{2}}^{m} \{ t (2m-3-t)\}^{2m-7} (2m-3-2t)^2 \, dt. 
\end{align*}
As $x+y = s(2m-3)$ on $\Omega_2$, we can verify the inequality
\begin{align*}
\bar{x}_2 + \bar{y}_2 &= \frac{1}{\text{Vol}_{DH}(\Omega_2)} \int_{\Omega_2} (x + y) \, P_{DH}(x, y) \, dx dy \\
&= \frac{1}{\text{Vol}_{DH}(\Omega_2)} \cdot 4 (2m-3)^4 \int_0^1 s^{4m-8} \, ds \int_{m-\frac{3}{2}}^{m} \{t (2m-3-t)\}^{2m-7} (2m-3-2t)^2 \, dt. \\ 
&= (2m-3) \cdot \frac{4m-8}{4m-7} = \frac{4m-6}{4m-7} \cdot (2m-4) > 2m-4.
\end{align*} 
Since 
\begin{align*}
\bar{x}_2 &= \frac{1}{\text{Vol}_{DH}(\Omega_2)} \int_{\Omega_2} x \, P_{DH}(x, y) \, dx dy \\
& = \frac{4m-8}{4m-7} \cdot \frac{\int_{m-\frac{3}{2}}^{m} t \{t (2m-3-t)\}^{2m-7} (2m-3-2t)^2 \, dt}{\int_{m-\frac{3}{2}}^{m} \{t (2m-3-t)\}^{2m-7} (2m-3-2t)^2 \, dt}, 
\end{align*} 
we conclude that $\bar{x}_2 >m-1$ by Lemma~\ref{inequality1}.  

(iii) To show $\bar{x} + \bar{y} > 2m-4$, for $m \geq 4$ we will verify the inequality 
\begin{align*}
\bar{x} + \bar{y} &= \frac{\displaystyle \int_{\Omega_1} (x+y) \, P_{DH}(x, y) \, dx dy + \int_{\Omega_2} (x+y) \, P_{DH}(x, y) \, dx dy}{\text{Vol}_{DH}(\Omega_1) + \text{Vol}_{DH}(\Omega_2)} \\
&= \frac{\displaystyle\frac{4 m^{2m-6}}{4m-7}  \int_0^{m-3} t^{2m-7} (m+t)^3 (m-t)^2 \, dt + \frac{4 (2m-3)^4}{4m-7} \int_{m-\frac{3}{2}}^{m} \{ t (2m-3-t)\}^{2m-7} (2m-3-2t)^2 \, dt}
{\displaystyle \frac{4 m^{2m-6}}{4m-8} \int_0^{m-3} t^{2m-7} (m+t)^2 (m-t)^2 \, dt + \frac{4 (2m-3)^3}{4m-8} \int_{m-\frac{3}{2}}^{m} \{ t (2m-3-t)\}^{2m-7} (2m-3-2t)^2 \, dt} \\
& > 2m-4.
\end{align*} 
To do this, it suffices to show the following inequality 
\[
m^{2m-6} \int_0^{m-3} t^{2m-7} (m+t)^2 (m-t)^2 (2t-2m+7) \, dt + (2m-3)^3 \int_{m-\frac{3}{2}}^{m} \{ t (2m-3-t)\}^{2m-7} (2m-3-2t)^2 \, dt >0. 
\]
From Lemma~\ref{inequality2}, we get the result as claimed. 
\end{proof}

Now, we complete the proof by showing the remaining two lemmas.

\begin{lemma} 
\label{inequality1}
For $m \geq 4$, the inequality 
\[ 
\int_{m-\frac{3}{2}}^{m} \left ( \frac{4m-8}{4m-7} \, t - (m-1) \right ) \{t (2m-3-t)\}^{2m-7} (2m-3-2t)^2 \, dt >0
\] 
holds. 
\end{lemma}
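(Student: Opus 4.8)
The plan is to move the integration variable to the symmetry point $t = m - \tfrac32$ of the weight, reduce the statement to an elementary scalar inequality in $m$, and settle it by a crude but safe estimate exploiting the monotonicity of the factor $\bigl(t(2m-3-t)\bigr)^{2m-7}$.

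First I would substitute $t = (m-\tfrac32) + u$, so that $u$ runs over $[0,\tfrac32]$; setting $t_c := m-\tfrac32$ and $N := 2m-7\ge 1$, one computes $t(2m-3-t) = t_c^2 - u^2$, $(2m-3-2t)^2 = 4u^2$, and $\tfrac{4m-8}{4m-7}t - (m-1) = \tfrac1{4m-7}\bigl[(4m-8)u - (3m-5)\bigr]$. Since $\tfrac4{4m-7} > 0$, the lemma becomes
\[
J := \int_0^{3/2}\bigl[(4m-8)u-(3m-5)\bigr]\,u^2\,(t_c^2-u^2)^{N}\,du > 0 .
\]
Write $g(u) := (4m-8)u-(3m-5) = (4m-8)(u-u^*)$ with $u^* := \tfrac{3m-5}{4(m-2)}$. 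A direct check gives $0 < u^* < \tfrac32$ for $m\ge 4$, and since $u^*$ decreases in $m$ from $u^*(4) = \tfrac78$ to the limit $\tfrac34$, we have $u^*\in(\tfrac34,\tfrac78]$ throughout. Thus the integrand of $J$ is $\le 0$ on $[0,u^*]$ and $\ge 0$ on $[u^*,\tfrac32]$.

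Next I would bound $\rho(u) := (t_c^2-u^2)^N$, which is positive and decreasing on $[0,\tfrac32]$ (as $t_c > \tfrac32$): so $\rho\le\rho(0) = t_c^{2N}$ on $[0,u^*]$ and $\rho\ge\rho(\tfrac32) = (m(m-3))^N$ on $[u^*,\tfrac32]$, using $t_c^2 - \tfrac94 = m(m-3)$. Together with the sign information this yields
\[
J \;\ge\; (m(m-3))^N\!\int_{u^*}^{3/2}\! g\,u^2\,du \;-\; t_c^{2N}\!\int_0^{u^*}\! |g|\,u^2\,du ,
\]
and both integrals are elementary: $\int_0^{u^*}|g|u^2\,du = \tfrac{(4m-8)u^{*4}}{12}$ and $\int_{u^*}^{3/2}g u^2\,du = (4m-8)\bigl(\tfrac{81}{64}-\tfrac{9u^*}{8}+\tfrac{u^{*4}}{12}\bigr)$. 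Dividing through by the positive quantity $(4m-8)\,(m(m-3))^N\,\tfrac{u^{*4}}{12}$, it then suffices to show, for all $m\ge 4$,
\[
\frac{3\,(81-72u^*)}{16\,u^{*4}} + 1 \;>\; \Bigl(\frac{t_c^2}{m(m-3)}\Bigr)^{\!N} = \Bigl(1 + \frac{9}{4m(m-3)}\Bigr)^{\!2m-7}.
\]
For the right side I would use $1+x<e^x$ and the elementary estimate $\tfrac{9(2m-7)}{4m(m-3)}<1$ --- equivalently $4m^2-30m+63>0$, whose discriminant $-108$ is negative --- so the right side is $< e < 3$. For the left side, $u^*\in(\tfrac34,\tfrac78]$ forces $81-72u^*\ge 18$ and $u^{*4}\le(7/8)^4<3/5$, so the left side is at least $\tfrac{3\cdot 18}{16\cdot(3/5)}+1 = \tfrac{270}{48}+1 > 6$. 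Since $6>3$, the inequality holds and the proof is complete.

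I do not expect a serious obstacle here; the only point that needs attention is that the crude monotonicity bound on $\rho$ could a priori be ruinously wasteful because of the large exponent $N = 2m-7$. It is not, precisely because $t_c^2/(m(m-3)) = 1 + O(m^{-2})$: the right side of the scalar inequality therefore stays below $e$ uniformly in $m$, while the left side --- controlled only through $u^*\in(\tfrac34,\tfrac78]$ --- stays above $6$, leaving a wide margin for every $m\ge 4$.
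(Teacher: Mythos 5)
Your argument is correct, and every step checks out: the change of variable $t=(m-\tfrac32)+u$ matches the paper's, $u^*=\tfrac{3m-5}{4(m-2)}=\tfrac34+\tfrac1{4(m-2)}$ is exactly the quantity $\varphi(m)$ appearing in the paper, and your elementary integrals and the final scalar inequality (left side $>6$, right side $<e<3$) are all verified. Where you genuinely diverge is in how the weight $\{t(2m-3-t)\}^{2m-7}=(t_c^2-u^2)^N$ is eliminated. The paper replaces it by its minimum value $\{m(m-3)\}^{2m-7}$ over the \emph{whole} interval in one stroke, reducing the lemma to $\int_0^{3/2}u^2\bigl(u-\varphi(m)\bigr)\,du>0$; note that this pointwise replacement is in the favorable direction only where the linear factor is nonnegative, i.e.\ on $[u^*,\tfrac32]$, since on $[0,u^*]$ the factor is negative and enlarging the weight there would hurt rather than help. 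Your proof handles precisely this sign issue: you split at $u^*$, bound the weight by its maximum $t_c^{2N}$ on the negative part and by its minimum $(m(m-3))^N$ on the positive part, so every estimate is pointwise legitimate. The price is that the ratio $\bigl(t_c^2/(m(m-3))\bigr)^{2m-7}$ survives and must be controlled uniformly in $m$, which you do cleanly via $1+x<e^x$ and $4m^2-30m+63>0$, getting a bound below $e$ against a left side above $6$. So your route is somewhat longer but more careful and self-contained, with a comfortable uniform margin, while the paper's is shorter because it discards the weight entirely before integrating.
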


\begin{proof}
For a fixed $m$, since $t (2m-3-t)$ is a decreasing function on the interval $[m-\frac{3}{2},  m]$, we have $t (2m-3-t) > m(m-3)$ for $m-\frac{3}{2} < t <  m$. 
Thus, 
\begin{align*}
I(m) : & = \int_{m-\frac{3}{2}}^{m} \left ( \frac{4m-8}{4m-7} \, t - (m-1) \right ) \{t (2m-3-t)\}^{2m-7} (2m-3-2t)^2 \, dt \\
& > \int_{m-\frac{3}{2}}^{m} \left ( \frac{4m-8}{4m-7} \, t - (m-1) \right ) \{m (m-3)\}^{2m-7} (2m-3-2t)^2 \, dt  \\
& = \{m (m-3)\}^{2m-7} \int_{m-\frac{3}{2}}^{m} \left ( \frac{4m-8}{4m-7} \, t - (m-1) \right )  (2m-3-2t)^2 \, dt \\
& = \{m (m-3)\}^{2m-7} \cdot \frac{4m-8}{4m-7} \int_{0}^{\frac{3}{2}} \left ( u + m -\frac{3}{2} - (m-1) \frac{4m-7}{4m-8} \right )  (-2u)^2 \, du 
\qquad \text{(let $u:=t-(m-\frac{3}{2})$)}. 
\end{align*} 
Putting $\varphi(m) = (m-1) \displaystyle \frac{4m-7}{4m-8} - m + \frac{3}{2} 
= \frac{3}{4} + \frac{1}{4(m-2)}$, it suffices to show that $\displaystyle \int_{0}^{\frac{3}{2}} u^2(u-\varphi(m)) \, du >0$. 
As $\displaystyle \frac{3}{4} < \varphi(m) \leq \frac{7}{8}$ for $m \geq 4$, 
we obtain that $\displaystyle \int_{0}^{\frac{3}{2}} u^2(u-\varphi(m)) \, du = \left (\frac{3}{2} \right)^3 \left ( \frac{3}{8} - \frac{\varphi(m)}{3} \right) > 0$. 
\end{proof}

\begin{lemma} 
\label{inequality2}
For $m \geq 4$, the inequality 
\[ 
m^{2m-6} \int_0^{m-3} t^{2m-7} (m+t)^2 (m-t)^2 (2t-2m+7) \, dt + (2m-3)^3 \int_{m-\frac{3}{2}}^{m} \{ t (2m-3-t)\}^{2m-7} (2m-3-2t)^2 \, dt >0
\] 
holds. 
\end{lemma}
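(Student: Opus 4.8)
The plan is to exploit that the second integral is manifestly positive and to show it dominates the (possibly negative) first one. Write
\[
I_1:=\int_0^{m-3}t^{2m-7}(m^2-t^2)^2(2t-2m+7)\,dt,\qquad I_2:=\int_{m-3/2}^{m}\{t(2m-3-t)\}^{2m-7}(2m-3-2t)^2\,dt,
\]
so the claim is $m^{2m-6}I_1+(2m-3)^3I_2>0$. Exactly as in Lemma~\ref{inequality1}, $t(2m-3-t)$ is decreasing on $[m-\tfrac32,m]$ (derivative $2m-3-2t\le 0$), hence $\ge m(m-3)$ there; since $2m-7\ge 1$ this gives $\{t(2m-3-t)\}^{2m-7}\ge\{m(m-3)\}^{2m-7}$, and using $\int_{m-3/2}^m(2m-3-2t)^2\,dt=\tfrac92$ we obtain $I_2\ge\tfrac92\{m(m-3)\}^{2m-7}$. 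The factor $2t-2m+7$ is $\ge 0$ on $[m-\tfrac72,m-3]$, so $I_1\ge -N$ where $N:=\int_0^{m-7/2}t^{2m-7}(m^2-t^2)^2(2m-7-2t)\,dt\ge 0$. It therefore suffices to prove $\tfrac92(2m-3)^3\{m(m-3)\}^{2m-7}\ge m^{2m-6}N$. (One can check $I_1$ itself is positive, i.e.\ the center of mass of $t^{2m-7}(m^2-t^2)^2$ on $[0,m-3]$ exceeds $m-\tfrac72$, but this holds with an asymptotically razor-thin margin and is awkward to establish directly, which is why I go through $N$.)

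The key step is a sufficiently sharp upper bound for $N$. Substitute $t=(m-\tfrac72)-v$, $v\in[0,L]$, with $L:=m-\tfrac72$; then $2m-7=2L$, $2m-7-2t=2v$, $t^{2m-7}=(L-v)^{2L}$, and $m^2-t^2=c_0+2Lv-v^2$ with $c_0:=m^2-L^2=\tfrac{7(4m-7)}{4}$. Since $0\le c_0+2Lv-v^2\le c_0+2Lv$ on $[0,L]$,
\[
N\le 2\int_0^L(L-v)^{2L}(c_0+2Lv)^2\,v\,dv=2c_0^2M_1+8c_0L\,M_2+8L^2M_3,\qquad M_j:=\int_0^L(L-v)^{2L}v^j\,dv=\frac{j!\,L^{2L+j+1}}{(2L+1)(2L+2)\cdots(2L+j+1)}.
\]
Inserting the three Beta integrals $M_1,M_2,M_3$ and recalling $2L=2m-7$, $L^2=\tfrac{(2m-7)^2}{4}$ expresses the right‑hand side as an explicit rational function of $m$ times $(m-\tfrac72)^{2m-7}$.

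Finally, divide the target inequality $\tfrac92(2m-3)^3\{m(m-3)\}^{2m-7}\ge m^{2m-6}N$ by $m^{2m-7}(m-\tfrac72)^{2m-7}$ and use $\bigl(\tfrac{m-3}{m-7/2}\bigr)^{2m-7}=\bigl(1+\tfrac1{2m-7}\bigr)^{2m-7}\ge 2$ for $m\ge 4$ (this function of $2m-7$ is increasing and equals $2$ at $2m-7=1$). Everything then reduces to the polynomial inequality
\[
288(2m-3)^4(2m-4)(2m-5)(2m-6)\ge m(2m-7)^2\Bigl[49(4m-7)^2(2m-4)(2m-3)+56(4m-7)(2m-7)^2(2m-3)+24(2m-7)^4\Bigr],
\]
whose two sides are degree‑$7$ polynomials in $m$ with leading coefficients $36864$ and $21248$. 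Since $36864>21248$ and the inequality holds with a comfortable margin at $m=4,5,6,\dots$, a routine verification (e.g.\ checking a few small values and then comparing with the leading term, or expanding the difference about $m=4$) shows it holds for all $m\ge 4$, completing the proof.

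The main obstacle is the tightness of the comparison: $(2m-3)^3I_2$ and $m^{2m-6}N$ are both of exact order $m^{4m-11}$, their ratio tending to a fixed constant of moderate size. Consequently the bound on $N$ must retain the fact that the integrand of $N$ concentrates near $t=m-\tfrac72$, where $m^2-t^2=\tfrac{7(4m-7)}{4}$ is of order $m$ only — the naive bound $(m^2-t^2)^2\le m^4$ loses a factor of order $m^2$ and makes the final inequality false for large $m$. The shift $t\mapsto (m-\tfrac72)-v$ together with the exact evaluation of the $M_j$ is precisely what keeps the loss within a bounded factor, leaving the closing polynomial inequality comfortably true.
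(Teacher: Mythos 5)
Your argument is correct, and it takes a genuinely different route from the paper's. The paper evaluates the first integral exactly in closed form, $\int_0^{m-3}t^{2m-7}(m+t)^2(m-t)^2(2t-2m+7)\,dt=(m-3)^{2m-6}q(m)$ with $q$ an explicit rational function, uses the same lower bound $t(2m-3-t)>m(m-3)$ on the second integral that you use (giving the factor $\tfrac92\{m(m-3)\}^{2m-7}$), and is then left with the positivity of a degree-$9$ polynomial, checked via monotonicity on $[4,\infty)$ and its value at $m=4$. You avoid the exact evaluation: you keep only the negative part of the first integrand, bound it after the shift $t=(m-\tfrac72)-v$ by the Beta integrals $M_j$, and compare $(m-3)^{2m-7}$ with $(m-\tfrac72)^{2m-7}$ via $(1+\tfrac1{2m-7})^{2m-7}\ge 2$. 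I checked that your reduction to $288(2m-3)^4(2m-4)(2m-5)(2m-6)\ge m(2m-7)^2[\cdots]$ is algebraically exact, and this degree-$7$ inequality does hold on $[4,\infty)$ (leading coefficients $36864$ versus $21248$; the ratio of the two sides decreases from about $13$ at $m=4$ toward $36864/21248\approx 1.73$). What each approach buys: the paper's is a direct computation with no estimation slack; yours replaces the closed form by soft estimates that keep the loss bounded — your observation that the crude bound $(m^2-t^2)^2\le m^4$ would lose a factor of order $m^2$ and ruin the comparison identifies exactly the delicate point — at the cost of a somewhat cruder constant and a lower-degree final polynomial.

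Two caveats. First, your parenthetical claim that the first integral is itself positive is false for large $m$: the paper's closed form gives it as $(m-3)^{2m-6}q(m)$ with $q(m)\sim -8m$, so it is negative for all large $m$ (it happens to be positive at $m=4$); since you never use this, it does not affect the proof. Second, ``checking a few small values and comparing with the leading term'' is not by itself a proof that the degree-$7$ inequality holds on all of $[4,\infty)$; one still needs an actual, if routine, verification (e.g.\ expanding the difference polynomial at $m=4$ and checking signs, or a Sturm/derivative argument). This is, however, the same kind of final ``we can check'' step the paper itself uses for its degree-$9$ polynomial, so the two proofs end at a comparable level of rigor.
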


\begin{proof}
From the direct computation, we get 
\begin{align*}
& \int_0^{m-3} t^{2m-7} (m+t)^2 (m-t)^2 (2t-2m+7) \, dt \\
& = \int_0^{m-3} t^{2m-7} \{ 2t^5 - (2m-7) t^4 -4m^2 t^3 + 2(2m-7) m^2 t^2 + 2 m^4 t - (2m-7) m^4 \} \, dt \\
& = (m-3)^{2m-6} \cdot \frac{-512 m^7 + 9024 m^6 - 62848 m^5 + 233160 m^4 - 507384 m^3 + 656100 m^2 - 471420 m + 145800}{(2m-1)(2m-2)(2m-3)(2m-4)(2m-5)(2m-6)} \\
& =: (m-3)^{2m-6} q(m) 
\end{align*} 
As $t (2m-3-t) > m(m-3)$ for $m-\frac{3}{2} < t <  m$, we have 
\begin{align*}
& m^{2m-6} \int_0^{m-3} t^{2m-7} (m+t)^2 (m-t)^2 (2t-2m+7) \, dt + (2m-3)^3 \int_{m-\frac{3}{2}}^{m} \{ t (2m-3-t)\}^{2m-7} (2m-3-2t)^2 \, dt \\
& > m^{2m-6} \int_0^{m-3} t^{2m-7} (m+t)^2 (m-t)^2 (2t-2m+7) \, dt + (2m-3)^3 \int_{m-\frac{3}{2}}^{m} \{ m (m-3)\}^{2m-7} (2m-3-2t)^2 \, dt \\
& = m^{2m-6} (m-3)^{2m-6} q(m) + \frac{9}{2} \{ m (m-3) \}^{2m-7} (2m-3)^3 \\ 
& = m^{2m-7} (m-3)^{2m-7} \left\{ m (m-3) q(m) + \frac{9}{2} (2m-3)^3 \right\}.
\end{align*} 
We can check that a polynomial 
\begin{align*}
p(m) & := (2m-1)(2m-2)(2m-3)(2m-4)(2m-5)(2m-6) \left\{ m (m-3) q(m) + \frac{9}{2} (2m-3)^3 \right\} \\ 
& = m (m-3) (-512 m^7 + 9024 m^6 - 62848 m^5 + 233160 m^4 - 507384 m^3 + 656100 m^2 - 471420 m + 145800) \\
& \quad + \frac{9}{2} (2m-1)(2m-2)(2m-3)(2m-4)(2m-5)(2m-6) (2m-3)^3 
\end{align*} 
of degree 9 is increasing on $[4, \infty)$, and $p(4) > 0$. 
Hence, we conclude the result. 
\end{proof}

\begin{theorem}
\label{Main theorem-eq} 
All wonderful compactifications $X_m$ of symmetric homogeneous spaces of type $\textup{AIII}(2, m)$ for $m \geq  4$ are $\SL_m(\mathbb C)$-equivariantly uniformly K-stable.
\end{theorem}

\begin{proof}
Recall that $2 \rho_{\theta} = (m-1) \alpha_{1,m} + (m-3) \alpha_{2,m-1} = (m-1, m-3)$ from Lemma~\ref{2rho} and the cone $\mathcal C^+_{\theta}$ is generated by the vectors $(1, -1)$ and $(0, 1)$. 
As $\bar{x} > m-1$ and $\bar{x} + \bar{y} > 2m-4$ by Proposition~\ref{barycenter}, 
the barycenter $\textbf{bar}_{DH}(\Delta_m)$ is in the relative interior of the translated cone $2 \rho_{\theta} + \mathcal C^+_{\theta}$. 
Therefore, $X_m$ is $\SL_m(\mathbb C)$-equivariantly uniformly K-stable by Proposition \ref{criterion}. 
\end{proof}


\section{Blow-ups of the wonderful compactifications along the closed orbit} 

Let $Z = Y_1 \cap Y_2$ be the intersection of two $G$-stable divisors $Y_1, Y_2$ in the wonderful compactification $X_m$ of the symmetric homogeneous space of type $\textup{AIII}(2, m)$, which is the unique closed $G$-orbit in $X_m$. 
In this section, we study the existence of a K\"{a}hler--Einstein metric on the blow-up $B\ell_Z(X_m)$ of $X_m$ along the closed orbit $Z$. 

\subsection{Moment polytopes of the blow-ups of $X_m$ along the closed orbit} 
In the case of $m=4$, 
the blow-up $B\ell_Z(X_4)$ of the wonderful compactification $X_4$ of symmetric homogeneous space 
$$\SL_4(\mathbb C)/N_{\SL_4(\mathbb C)}(S(\GL_2(\mathbb C) \times \GL_2(\mathbb C)))$$ 
of type $\textup{AIII}(2, 4)$ along the unique closed orbit $Z$ is not Fano but Calabi--Yau from \cite[Proposition~2.1]{Ruzzi2012}.

\begin{proposition} 
\label{moment polytope_Bl}
For $m \geq 5$, the moment polytope $\Delta_m^{B\ell} = \Delta(B\ell_Z(X_m), K_{B\ell_Z(X_m)}^{-1})$ of the anticanonical line bundle $K_{B\ell_Z(X_m)}^{-1}$ is the convex hull of five points $0$, $m \alpha_{1,m}$, $m \alpha_{1,m} + (m-4) \alpha_{2,m-1}$, $(m-1) \alpha_{1,m} + (m-2) \alpha_{2,m-1}$, $(m - \frac{3}{2}) \alpha_{1,m} + (m - \frac{3}{2}) \alpha_{2,m-1}$ in $\mathcal M \otimes \mathbb R$. 
\end{proposition}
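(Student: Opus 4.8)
The plan is to realise $B\ell_Z(X_m)$ as a Fano embedding of the \emph{same} symmetric homogeneous space $G/H$ that underlies $X_m$, to read off its $G$-stable divisors and anticanonical class, and then to feed everything into Corollary~\ref{moment polytope}. By \cite[Theorem~B]{Ruzzi2012} the blow-up of a wonderful compactification along its closed orbit is again a symmetric variety with open orbit $G/H$, and it is Fano for $m\geq 5$; hence all of the spherical data used in Section~3 (the lattices $\mathcal M,\mathcal N$, the colors $D_1,D^+,D^-$ with their images $\rho(D_i)$ and their types, and the valuation cone $\mathcal V$) are literally the same as in the wonderful case treated in Lemma~\ref{type of colors} and Proposition~\ref{moment polytope_AIII}. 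Only the set of $G$-stable prime divisors changes, and it is through that change that everything must be computed.

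First I would identify those $G$-stable divisors: they are the strict transforms of $Y_1$ and $Y_2$ — which I still denote $Y_1,Y_2$, and which define the same divisorial valuations, so that $\hat{\rho}(Y_1)=-\tfrac12\alpha_{1,m}^{\vee}$ and $\hat{\rho}(Y_2)=-\tfrac12\alpha_{1,m}^{\vee}-\tfrac12\alpha_{2,m-1}^{\vee}$ as recorded in the proof of Proposition~\ref{moment polytope_AIII} — together with the exceptional divisor $E$ over the closed orbit $Z=Y_1\cap Y_2$. The one substantive point is the claim $\hat{\rho}(E)=\hat{\rho}(Y_1)+\hat{\rho}(Y_2)$. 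Since $X_m$ is smooth and $Z$ is the transverse intersection of the smooth $G$-stable divisors $Y_1,Y_2$, near a general point of $Z$ there are local coordinates in which $Y_i=\{z_i=0\}$, while in a chart of the blow-up $E=\{z_1=0\}$ and $z_2/z_1$ becomes a coordinate; writing $f_\chi=z_1^{a_1}z_2^{a_2}\cdot(\text{unit})$ with $a_i=\nu_{Y_i}(f_\chi)$ and pulling back gives $\nu_E(f_\chi)=a_1+a_2$ for every $B$-semi-invariant function $f_\chi$, $\chi\in\mathcal M$, which is exactly $\hat{\rho}(E)=\hat{\rho}(Y_1)+\hat{\rho}(Y_2)$ (the spherical avatar of the fact that blowing up $Z$ is a star subdivision of the colored fan of $X_m$). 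Thus $\hat{\rho}(E)=-\alpha_{1,m}^{\vee}-\tfrac12\alpha_{2,m-1}^{\vee}$.

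Next I would pin down the anticanonical class. A blow-up along a $G$-stable subvariety leaves the open $B$-orbit and its stabiliser untouched, so the $B$-weight $\xi$ of a $B$-semi-invariant anticanonical section remains $\xi=2\rho_\theta=(m-1)\alpha_{1,m}+(m-3)\alpha_{2,m-1}$ by Lemma~\ref{2rho}, and since the colors are unchanged and still of type \textsf{b}, Proposition~\ref{expression of anticanonical divisor} applied as in Proposition~\ref{anticanonical divisor} yields $-K_{B\ell_Z(X_m)}=Y_1+Y_2+E+2D_1+(m-3)D^++(m-3)D^-$. By Corollary~\ref{moment polytope}, writing $p=x\alpha_{1,m}+y\alpha_{2,m-1}$, the polytope $\Delta_m^{B\ell}$ is then the intersection of the positive restricted Weyl chamber $\{x\geq y\geq 0\}$ with the three half-spaces $\langle\hat{\rho}(Y_1),p-2\rho_\theta\rangle\geq -1$, $\langle\hat{\rho}(Y_2),p-2\rho_\theta\rangle\geq -1$ and $\langle\hat{\rho}(E),p-2\rho_\theta\rangle\geq -1$. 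The first two reproduce, exactly as in the proof of Proposition~\ref{moment polytope_AIII}, the inequalities $x\leq m$ and $x+y\leq 2m-3$; for the third, using $\langle\hat{\rho}(E),2\rho_\theta\rangle=-3m+5$ one computes it to be $2x+y\leq 3m-4$.

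Finally I would read off the vertices: intersecting the polytope $\{\,0\leq y\leq x,\ x\leq m,\ x+y\leq 2m-3\,\}$ of Proposition~\ref{moment polytope_AIII} with the half-space $\{2x+y\leq 3m-4\}$ deletes precisely the vertex $m\alpha_{1,m}+(m-3)\alpha_{2,m-1}$ (there $2x+y=3m-3>3m-4$) and inserts the edge joining $m\alpha_{1,m}+(m-4)\alpha_{2,m-1}$ (the intersection of $\{2x+y=3m-4\}$ with $\{x=m\}$) to $(m-1)\alpha_{1,m}+(m-2)\alpha_{2,m-1}$ (its intersection with $\{x+y=2m-3\}$), while the vertices $0$, $m\alpha_{1,m}$ and $(m-\tfrac32)\alpha_{1,m}+(m-\tfrac32)\alpha_{2,m-1}$ all satisfy $2x+y<3m-4$ for $m\geq 5$ and survive; this is exactly the asserted convex hull. (For $m=4$ the new vertex $m\alpha_{1,m}+(m-4)\alpha_{2,m-1}$ would collapse onto $m\alpha_{1,m}$, in keeping with $B\ell_Z(X_4)$ being Calabi--Yau rather than Fano.) The only genuinely delicate step is the identification $\hat{\rho}(E)=\hat{\rho}(Y_1)+\hat{\rho}(Y_2)$ — i.e. keeping track of how the blow-up modifies the colored data — and once that is in place the rest is a routine polytope computation.
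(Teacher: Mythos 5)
Your proposal is correct and follows essentially the same route as the paper: keep all the spherical data of $G/H$ unchanged, identify the new $G$-stable divisor datum $\hat{\rho}(E)=\hat{\rho}(Y_1)+\hat{\rho}(Y_2)=-\alpha_{1,m}^{\vee}-\tfrac12\alpha_{2,m-1}^{\vee}$, and rerun the argument of Proposition~\ref{moment polytope_AIII} with the extra half-space $2x+y\le 3m-4$, which cuts the vertex $(m,m-3)$ and produces exactly the five asserted vertices. The only difference is that the paper obtains $\hat{\rho}(E)$ by citing Ruzzi's Table~7, whereas you justify it by a direct local (star-subdivision) computation at a general point of $Z$ --- a harmless, slightly more self-contained variant of the same step.
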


\begin{proof}
Since $Z$ is the intersection of two $G$-stable divisors $Y_1,Y_2$ in the wonderful compactification $X_m$, 
the exceptional divisor $E$ for the blow-up $\pi \colon B\ell_Z(X_m) \to X_m$ is linearly equivalent to $Y_1 + Y_2$.
As the $G$-stable divisors $Y_1, Y_2$ in $X_m$ have the images 
$\hat{\rho}(Y_1) = - \bar{\alpha}_{1, m-1}^{\vee} = - \frac{1}{2} \alpha_{1,m}^{\vee}$ and  
$\hat{\rho}(Y_2) = - \bar{\alpha}_{1, m}^{\vee} = - \frac{1}{2} \alpha_{1,m}^{\vee} - \frac{1}{2} \alpha_{2,m-1}^{\vee}$
in $\mathcal N$, respectively, 
the exceptional divisor $E$ for the blow-up $\pi \colon B\ell_Z(X_m) \to X_m$ has the image 
$\hat{\rho}(E) = - \bar{\alpha}_{1, m-1}^{\vee} - \bar{\alpha}_{1, m}^{\vee} \in \mathcal N$ (see \cite[Table~7]{Ruzzi2012}). 
The same argument as in Proposition \ref{moment polytope_AIII} implies the result. 
\end{proof}

\begin{figure}
 \begin{minipage}[b]{0.55 \textwidth}
 \centering

\begin{tikzpicture}
\clip (-1.3,-1.5) rectangle (7.3, 5.3); 

\coordinate (a1) at (1,-1);
\coordinate (a2) at (0,1);
\coordinate (a3) at ($(a1)+(a2)$);
\coordinate (a4) at ($2*(a3)$);
\coordinate (a5) at ($(a3)+(a2)$);
\coordinate (a6) at ($2*(a2)$);

\coordinate (v1) at (5,0);
\coordinate (v2) at (5,1);
\coordinate (v3) at (4,3);
\coordinate (v4) at (3.5, 3.5);

\coordinate (barycenter) at (4.13400232559311, 1.87005852772417);

\coordinate (Origin) at (0,0);
\coordinate (asum) at ($(a1)+(a2)$);
\coordinate (2rho) at (4,2);

\foreach \x  in {-8,-7,...,12}{
  \draw[help lines,dashed]
    (\x,-8) -- (\x,11)
    (-8,\x) -- (11,\x) 
     [rotate=45] (1.414*\x/2,-8) -- (1.414*\x/2,12) ;
}

\foreach \x  in {-8,-7,...,12}{
  \draw[help lines,dashed]
     [rotate=135] (1.414*\x/2,-8) -- (1.414*\x/2,12) ;
}

\fill (Origin) circle (2pt) node[below left] {0};

\fill (a1) circle (2pt) node[below] {$\alpha_{1,5} - \alpha_{2,4}$};
\fill (a2) circle (2pt) node[left] {$\alpha_{2,4}$};
\fill (a3) circle (2pt) node[below] {$\alpha_{1,5}$};
\fill (a4) circle (2pt) node[below right] {$2\alpha_{1,5}$};
\fill (a5) circle (2pt) node[below right] {$\alpha_{1,5} + \alpha_{2,4}$};
\fill (a6) circle (2pt) node[left] {$2 \alpha_{2,4}$};

\fill (2rho) circle (2pt) node[below left] {$2\rho_{\theta}$};

\fill (v1) circle (2pt) node[below] {$5 \alpha_{1, 5}$};
\fill (v2) circle (2pt) node[right] {$5 \alpha_{1, 5} + \alpha_{2, 4}$};
\fill (v3) circle (2pt) node[right] {$4 \alpha_{1, 5} + 3 \alpha_{2, 4}$};
\fill (v4) circle (2pt) node[above left] {$\frac{7}{2} \alpha_{1, 5} + \frac{7}{2} \alpha_{2, 4}$};

\fill (barycenter) circle (2pt) node[right] {$\textbf{bar}_{DH}(\Delta_5^{B\ell})$};

\draw[->,,thick](Origin)--(a1);
\draw[->,,thick](Origin)--(a2);
\draw[->,,thick](Origin)--(a3); 
\draw[->,,thick](Origin)--(a4);
\draw[->,,thick](Origin)--(a5);
\draw[->,,thick](Origin)--(a6); 

\draw[thick,gray](Origin)--(v1);
\draw[thick,gray](Origin)--(v4);
\draw[thick,gray](v1)--(v2);
\draw[thick,gray](v2)--(v3);
\draw[thick,gray](v3)--(v4);

\draw [shorten >=-4cm, red, thick, dashed] (2rho) to ($(2rho)+(a1)$);
\draw [shorten >=-4cm, red, thick, dashed] (2rho) to ($(2rho)+(a2)$);
\end{tikzpicture} 

\caption{$\Delta_5^{B\ell} = \Delta(B\ell_Z(X_5), K_{B\ell_Z(X_5)}^{-1})$}
\label{Delta_5_Bl}
\end{minipage}

 \begin{minipage}[b]{.55 \textwidth}
 \centering

\begin{tikzpicture}
\clip (-1.3,-1.5) rectangle (8.3, 5.8); 

\coordinate (a1) at (1,-1);
\coordinate (a2) at (0,1);
\coordinate (a3) at ($(a1)+(a2)$);
\coordinate (a4) at ($2*(a3)$);
\coordinate (a5) at ($(a3)+(a2)$);
\coordinate (a6) at ($2*(a2)$);

\coordinate (v1) at (6,0);
\coordinate (v2) at (6,2);
\coordinate (v3) at (5,4);
\coordinate (v4) at (4.5, 4.5);

\coordinate (barycenter) at (5.17130759356792, 2.80607873082343);

\coordinate (Origin) at (0,0);
\coordinate (asum) at ($(a1)+(a2)$);
\coordinate (2rho) at (5,3);

\foreach \x  in {-8,-7,...,12}{
  \draw[help lines,dashed]
    (\x,-8) -- (\x,11)
    (-8,\x) -- (11,\x) 
     [rotate=45] (1.414*\x/2,-8) -- (1.414*\x/2,12) ;
}

\foreach \x  in {-8,-7,...,12}{
  \draw[help lines,dashed]
     [rotate=135] (1.414*\x/2,-8) -- (1.414*\x/2,12) ;
}

\fill (Origin) circle (2pt) node[below left] {0};

\fill (a1) circle (2pt) node[below] {$\alpha_{1,6} - \alpha_{2,5}$};
\fill (a2) circle (2pt) node[left] {$\alpha_{2,5}$};
\fill (a3) circle (2pt) node[below] {$\alpha_{1,6}$};
\fill (a4) circle (2pt) node[below right] {$2\alpha_{1,6}$};
\fill (a5) circle (2pt) node[below right] {$\alpha_{1,6} + \alpha_{2,5}$};
\fill (a6) circle (2pt) node[left] {$2 \alpha_{2,5}$};

\fill (2rho) circle (2pt) node[below left] {$2\rho_{\theta}$};

\fill (v1) circle (2pt) node[below] {$6 \alpha_{1, 6}$};
\fill (v2) circle (2pt) node[right] {$6 \alpha_{1, 6} + 2 \alpha_{2, 5}$};
\fill (v3) circle (2pt) node[right] {$5 \alpha_{1, 6} + 4 \alpha_{2, 5}$};
\fill (v4) circle (2pt) node[above left] {$\frac{9}{2} \alpha_{1, 6} + \frac{9}{2} \alpha_{2, 5}$};

\fill (barycenter) circle (2pt) node[right] {$\textbf{bar}_{DH}(\Delta_6^{B\ell})$};

\draw[->,,thick](Origin)--(a1);
\draw[->,,thick](Origin)--(a2);
\draw[->,,thick](Origin)--(a3); 
\draw[->,,thick](Origin)--(a4);
\draw[->,,thick](Origin)--(a5);
\draw[->,,thick](Origin)--(a6); 

\draw[thick,gray](Origin)--(v1);
\draw[thick,gray](Origin)--(v4);
\draw[thick,gray](v1)--(v2);
\draw[thick,gray](v2)--(v3);
\draw[thick,gray](v3)--(v4);

\draw [shorten >=-4cm, red, thick, dashed] (2rho) to ($(2rho)+(a1)$);
\draw [shorten >=-4cm, red, thick, dashed] (2rho) to ($(2rho)+(a2)$);
\end{tikzpicture} 

\caption{$\Delta_6^{B\ell} = \Delta(B\ell_Z(X_6), K_{B\ell_Z(X_6)}^{-1})$}
\label{Delta_6_Bl}
\end{minipage}
\end{figure}

\begin{example}
\label{blowup-56}
In the case of $m=5$, we compute the volume of the moment polytope $\Delta_5^{B\ell}$ 
\begin{align*}
\text{Vol}_{DH}(\Delta_5^{B\ell}) &= 
\displaystyle \int_{0}^{\frac{7}{2}} \int_{0}^{x} 4 x^{3} y^{3} (x+y)^2 (x-y)^2 \, dydx
+ \displaystyle \int_{\frac{7}{2}}^{4} \int_{0}^{7-x} 4 x^{3} y^{3} (x+y)^2 (x-y)^2 \, dydx \\
&+ \displaystyle \int_{4}^{5} \int_{0}^{11-2x} 4 x^{3} y^{3} (x+y)^2 (x-y)^2 \, dydx
= \frac{1553111579}{2520}  
\end{align*}
and the barycenter of $\Delta_5^{B\ell}$ with respect to the Duistermaat--Heckman measure 
\begin{align*}
\textbf{bar}_{DH}(\Delta_5^{B\ell})  
& = (\bar{x}, \bar{y}) 
= \frac{1}{\text{Vol}_{DH}(\Delta_5^{B\ell})} \left( \displaystyle \int_{\Delta_5^{B\ell}} x \prod_{\alpha \in \Phi^+ \backslash \Phi^{\theta}} \kappa(\alpha, p) \, dp , \displaystyle \int_{\Delta_5^{B\ell}} y \prod_{\alpha \in \Phi^+ \backslash \Phi^{\theta}} \kappa(\alpha, p) \, dp \right) \\
& = \left(\frac{5341911643737}{1292188833728}, \frac{2416468747943}{1292188833728} \right) \approx (4.134, 1.870). 
\end{align*} 
Since $\bar{x}>4$ and $\bar{x} + \bar{y} > 6$, $\textbf{bar}_{DH}(\Delta_5^{B\ell})$ is in the relative interior of the translated cone $2 \rho_{\theta} + \mathcal C^+_{\theta}$ (see Figure~\ref{Delta_5_Bl}). 
Therefore, the blow-up $B\ell_Z(X_5)$ of $X_5$ along the closed orbit $Z$ admits a K\"{a}hler--Einstein metric by Proposition~\ref{criterion}.

Similarly, we obtain the barycenter of the moment polytope $\Delta_6^{B\ell}$ with respect to the Duistermaat--Heckman measure 
\begin{align*}
\textbf{bar}_{DH}(\Delta_6^{B\ell})  
& = (\bar{x}, \bar{y}) = \left(\frac{5817870364882097045}{1125028875118233728}, \frac{15784597990157403671}{5625144375591168640} \right) \approx (5.171, 2.806). 
\end{align*} 
Since $\bar{x} + \bar{y} < 8$, $\textbf{bar}_{DH}(\Delta_6^{B\ell})$ is not in the relative interior of the translated cone $2 \rho_{\theta} + \mathcal C^+_{\theta}$ (see Figure~\ref{Delta_6_Bl}). 
Therefore, the blow-up $B\ell_Z(X_6)$ of $X_6$ along the closed orbit $Z$ cannot admit any K\"{a}hler--Einstein metric by Proposition~\ref{criterion}. 
\qed 
\end{example}

\subsection{Barycenters of moment polytopes $\Delta_m^{B\ell}$} 
By Example~\ref{blowup-56}, it is enough to prove Theorem \ref{Blowup} when $m \geq 7$. 

\begin{proposition} 
\label{barycenter-blowup}
Let $(\bar{x}, \bar{y})$ be the barycenter $\textup{\textbf{bar}}_{DH}(\Delta_m^{B\ell})$ of the moment polytope $\Delta_m^{B\ell}$ with respect to the Duistermaat--Heckman measure. 
Then we have $\bar{x} + \bar{y} < 2m-4$ for $m \geq 6$. 
\end{proposition}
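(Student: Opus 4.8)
The plan is to mirror the proof of Proposition~\ref{barycenter}: realize $\Delta_m^{B\ell}$ as a union of cones over the origin, use the homogeneity of the Duistermaat--Heckman density to reduce the barycenter inequality to an inequality between explicit one-variable integrals, and then estimate those integrals. By Proposition~\ref{moment polytope_Bl} the origin is a vertex of $\Delta_m^{B\ell}$, so $\Delta_m^{B\ell}$ is the union of the three cones over $0$ spanned by the edges not through $0$: $\Omega_1$ over $\{x=m,\ 0\le y\le m-4\}$, $\Omega_2$ over $\{2x+y=3m-4,\ m-1\le x\le m\}$, and $\Omega_3$ over $\{x+y=2m-3,\ m-\frac{3}{2}\le x\le m-1\}$, where $(x,y)$ are the coordinates with respect to $(\alpha_{1,m},\alpha_{2,m-1})$. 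Since $P_{DH}(x,y)=4x^{2m-7}y^{2m-7}(x+y)^2(x-y)^2$ is homogeneous of degree $4m-10$, a radial parametrization of each $\Omega_i$ lets the radial integral factor out (contributing $\frac{1}{4m-8}$ to $\text{Vol}_{DH}(\Omega_i)$ and $\frac{1}{4m-7}$ to $\int_{\Omega_i}(x+y)P_{DH}$), exactly as in Proposition~\ref{barycenter}. Writing $\bar x+\bar y<2m-4$ as $\sum_i\big(\int_{\Omega_i}(x+y)P_{DH}-(2m-4)\text{Vol}_{DH}(\Omega_i)\big)<0$ and using $\frac{2m-4}{4m-8}=\frac{1}{2}$ (for $\Omega_1,\Omega_2$) and $(2m-3)-(2m-4)\frac{4m-7}{4m-8}=\frac{1}{2}$ (for $\Omega_3$), one reduces --- after clearing the positive factor $\frac{4m-7}{2}$ --- to proving, for every $m\ge 7$ (the case $m=6$ being Example~\ref{blowup-56}),
\begin{align*}
& m^{2m-6}\int_0^{m-4}t^{2m-7}(m^2-t^2)^2(2t-2m+7)\,dt\\
&\quad+(3m-4)\int_{m-1}^{m}x^{2m-7}(3m-4-2x)^{2m-7}(3m-4-x)^2(3x-3m+4)^2(2m-1-2x)\,dx\\
&\quad+(2m-3)^3\int_{m-\frac{3}{2}}^{m-1}t^{2m-7}(2m-3-t)^{2m-7}(2m-3-2t)^2\,dt<0.
\end{align*}

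The first term is the dominant (negative) one and must be bounded below in absolute value: on $[0,m-4]$ one has $2t-2m+7\le -1$ and $(m^2-t^2)^2\ge (8m-16)^2=64(m-2)^2$, and since $t^{2m-7}$ concentrates the mass near $t=m-4$ the resulting bound $\le -\frac{64(m-2)^2}{2m-6}m^{2m-6}(m-4)^{2m-6}$ is essentially sharp. The third term is positive: $t(2m-3-t)$ is decreasing on $[m-\frac{3}{2},m-1]$, so $t(2m-3-t)\le(m-\frac{3}{2})^2$, and with $\int_{m-3/2}^{m-1}(2m-3-2t)^2\,dt=\frac{1}{6}$ this gives the (again essentially sharp) upper bound $\frac{1}{6}(m-\frac{3}{2})^{4m-14}(2m-3)^3$.

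The second term is sign-changing, and the argument really hinges on showing it is $\le 0$ for $m\ge 7$. Substituting $x=m-1+u$ turns it into $(3m-4)\int_0^1 w(u)(2m-3-u)^2(1+3u)^2(1-2u)\,du$ with $w(u)=(m-1+u)^{2m-7}(m-2-2u)^{2m-7}$; folding $[\frac{1}{2},1]$ onto $[0,\frac{1}{2}]$ by $u\mapsto 1-u$ rewrites it as $(3m-4)\int_0^{1/2}(1-2u)\big[w(u)(2m-3-u)^2(1+3u)^2-w(1-u)(2m-4+u)^2(4-3u)^2\big]\,du$, which is $\le 0$ once one checks the bracket is $\le 0$ on $[0,\frac{1}{2}]$. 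The latter is the elementary inequality $\big(\frac{(m-u)(m-4+2u)}{(m-1+u)(m-2-2u)}\big)^{2m-7}\ge \frac{(2m-3-u)^2(1+3u)^2}{(2m-4+u)^2(4-3u)^2}$, which is an equality at $u=\frac{1}{2}$ with the left side comfortably the larger near $u=0$.

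Granting the second term is $\le 0$, it remains to show that the first plus the third is negative, which after clearing powers amounts to $\frac{192(m-2)^2m(m-4)}{(m-3)(2m-3)^3}>\big(\frac{m^2-3m+9/4}{m^2-4m}\big)^{2m-7}$; the left side increases to $24$ while the right side decreases to $e^2$, so this is verified at $m=7$ and propagated by monotonicity. The individual reduction steps are routine (they copy Proposition~\ref{barycenter} verbatim); the real obstacle is the tightness of the margin --- Example~\ref{blowup-56} gives $\bar x+\bar y\approx 7.98$ against $2m-4=8$ already at $m=6$ --- which forbids wasteful estimates, in particular forcing one to exploit rather than discard the negativity of the sign-changing second term, and to carry out the sharp comparison of the three incommensurate exponential factors $[m(m-4)]^{2m-7}$, $[(m-1)(m-2)]^{2m-7}$ and $(m-\frac{3}{2})^{4m-14}$. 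These last two steps would naturally be isolated as lemmas, analogous to Lemmas~\ref{inequality1} and \ref{inequality2}.
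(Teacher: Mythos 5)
Your reduction to the three-term integral inequality is exactly the paper's, but the step your whole argument hinges on is false in the range where you need it. You claim the second (sign-changing) term is $\le 0$ for all $m\ge 7$ by folding and checking the pointwise bracket inequality $\bigl(\tfrac{(m-u)(m-4+2u)}{(m-1+u)(m-2-2u)}\bigr)^{2m-7}\ge \tfrac{(2m-3-u)^2(1+3u)^2}{(2m-4+u)^2(4-3u)^2}$ on $[0,\tfrac12]$. Take $m=7$, $u=\tfrac14$: the left side is $\bigl(\tfrac{6.75\cdot 3.5}{6.25\cdot 4.5}\bigr)^{7}=0.84^{7}\approx 0.295$, while the right side is $\bigl(\tfrac{10.75\cdot 1.75}{10.25\cdot 3.25}\bigr)^{2}\approx 0.319$, so the inequality fails; in fact the folded integrand $F(u)+F(1-u)$ is positive on most of $[0,\tfrac12]$ for $m=7$, and a numerical evaluation of the second term at $m=7$ gives approximately $+7\times 10^{11}$, not a nonpositive number. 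The failure is not an isolated fluke of one point: comparing derivatives of the two sides at the equality point $u=\tfrac12$ (where both logarithms vanish) shows the required inequality already fails in a left neighborhood of $u=\tfrac12$ for every $m\le 11$, so your key lemma is false at least for $7\le m\le 11$ and is only asymptotically plausible for large $m$. Since you explicitly grant this lemma before combining the first and third terms, the proof does not establish the proposition for small $m$, which is precisely the delicate range (the margin at $m=6$ is $7.98$ versus $8$).

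This is also where your route genuinely diverges from the paper, to its detriment: the paper never claims the second term is nonpositive. It splits the second integral at $t=m-\tfrac12$ (where $2m-2t-1$ changes sign), bounds $\{t(3m-4-2t)\}^{2m-7}$ by $\{(m-1)(m-2)\}^{2m-7}$ on one half and by $\{m(m-4)\}^{2m-7}$ on the other, and accepts an explicit \emph{positive} upper bound for that term, which is then absorbed by the negative first term for $m\ge 42$; the remaining cases $6\le m\le 41$ are checked directly by computer. Your first-plus-third comparison (the inequality $\tfrac{192(m-2)^2m(m-4)}{(m-3)(2m-3)^3}>\bigl(\tfrac{m^2-3m+9/4}{m^2-4m}\bigr)^{2m-7}$) is a reasonable reduction and does hold at $m=7$, but on its own it cannot close the argument once the second term is positive, and you give no quantitative upper bound for that term to absorb. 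To repair the proof you would either have to produce such an explicit positive bound (as the paper does) together with a careful large-$m$ analysis, or verify the small-$m$ cases by direct computation; as written, the argument has a genuine gap.
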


\begin{proof}
From Proposition \ref{moment polytope_Bl}, we can divide the moment polytope $\Delta_m^{B\ell}$ into three regions $\Omega_1$, $\Omega_2$ and $\Omega_3$: 
\begin{align*}
\Omega_1 & := \{ s (m, t) : 0 \leq s \leq 1, \, 0 \leq t \leq m-4 \}, \\
\Omega_2 & := \{ s (t, 3m-4-2t) : 0 \leq s \leq 1, \, m - 1 \leq t \leq m \}, \\
\Omega_3 & := \{ s (t, 2m-3-t) : 0 \leq s \leq 1, \, m - \frac{3}{2} \leq t \leq m-1 \}. 
\end{align*}
Using the parametrization $x=st, y=s(3m-4-2t)$ of $\Omega_2$, we get the volume form 
\begin{align*}
dx \wedge dy & = d(st) \wedge d(s(3m-4-2t)) = (s \, dt + t \, ds) \wedge [(3m-4-2t)ds + s(-2dt)] \\
& = s(3m-4-2t) \, dt \wedge ds -2ts \, ds \wedge dt = s(3m-4) \, dt \wedge ds.  
\end{align*}
Then we compute the volume of $\Omega_2$ with respect to the Duistermaat--Heckman measure: 
\begin{align*}
\text{Vol}_{DH}(\Omega_2) &= \int_{\Omega_2} P_{DH}(x, y) \, dx dy = \int_{\Omega_2} 2^2 x^{2m-7} y^{2m-7} (x+y)^2 (x-y)^2 \, dxdy\\
&= \int_0^1 \int_{m-1}^{m} 4 (st)^{2m-7} \{s(3m-4-2t)\}^{2m-7} \{s(3m-4-t)\}^2 \{s(3m-4-3t)\}^2 \, s(3m-4) \, dt ds \\
&= \int_0^1 \int_{m-1}^{m} 4 (3m-4) s^{4m-9} t^{2m-7} (3m-4-2t)^{2m-7} (3m-4-t)^2 (3m-4-3t)^2 \, dt ds \\
&= 4 (3m-4) \int_0^1 s^{4m-9} \, ds \int_{m-1}^{m} \{ t (3m-4-2t)\}^{2m-7} (3m-4-t)^2 (3m-4-3t)^2 \, dt \\
&= \frac{4 (3m-4)}{4m-8} \int_{m-1}^{m} \{ t (3m-4-2t)\}^{2m-7} (3m-4-t)^2 (3m-4-3t)^2 \, dt. 
\end{align*}
As $x+y = s(3m-4-t)$ on $\Omega_2$, we get
\begin{align*}
\bar{x}_2 + \bar{y}_2 &= \frac{1}{\text{Vol}_{DH}(\Omega_2)} \int_{\Omega_2} (x + y) \, P_{DH}(x, y) \, dx dy \\
&= \frac{1}{\text{Vol}_{DH}(\Omega_2)} \cdot 4 (3m-4) \int_0^1 s^{4m-8} \, ds \int_{m-1}^{m} \{ t (3m-4-2t)\}^{2m-7} (3m-4-t)^3 (3m-4-3t)^2 \, dt \\ 
&= \frac{1}{\text{Vol}_{DH}(\Omega_2)} \cdot \frac{4 (3m-4)}{4m-7} \int_{m-1}^{m} \{ t (3m-4-2t)\}^{2m-7} (3m-4-t)^3 (3m-4-3t)^2 \, dt.
\end{align*} 
Using the results in the proof of Proposition~\ref{barycenter}, we have 
\begin{align*}
\text{Vol}_{DH}(\Delta_m^{B\ell}) 
& = \displaystyle \frac{4 m^{2m-6}}{4m-8} \int_0^{m-4} t^{2m-7} (m+t)^2 (m-t)^2 \, dt \\ 
& + \frac{4 (3m-4)}{4m-8} \int_{m-1}^{m} \{ t (3m-4-2t)\}^{2m-7} (3m-4-t)^2 (3m-4-3t)^2 \, dt \\ 
& + \frac{4 (2m-3)^3}{4m-8} \int_{m-\frac{3}{2}}^{m-1} \{ t (2m-3-t)\}^{2m-7} (2m-3-2t)^2 \, dt, 
\end{align*} 
and 
\begin{align*}
(\bar{x}+\bar{y})\text{Vol}_{DH}(\Delta_m^{B\ell}) 
& = \displaystyle \frac{4 m^{2m-6}}{4m-7} \int_0^{m-4} t^{2m-7} (m+t)^3 (m-t)^2 \, dt \\ 
& + \frac{4 (3m-4)}{4m-7} \int_{m-1}^{m} \{ t (3m-4-2t)\}^{2m-7} (3m-4-t)^3 (3m-4-3t)^2 \, dt \\ 
& + \frac{4 (2m-3)^4}{4m-7} \int_{m-\frac{3}{2}}^{m-1} \{ t (2m-3-t)\}^{2m-7} (2m-3-2t)^2 \, dt. 
\end{align*} 
We can check $\bar{x} + \bar{y} < 2m-4$ for $6 \leq m \leq 41$ with the aid of a computer program \texttt{SAGE}. 
To prove $\bar{x} + \bar{y} < 2m-4$ for $m \geq 42$, it suffices to show the following inequality 
\begin{align*}
& m^{2m-6} \int_0^{m-4} t^{2m-7} (m+t)^2 (m-t)^2 (2t-2m+7) \, dt \\
& + (3m-4) \int_{m-1}^{m} \{ t (3m-4-2t)\}^{2m-7} (3m-4-t)^2 (3m-4-3t)^2 (2m-2t-1) \, dt \\
& + (2m-3)^3 \int_{m-\frac{3}{2}}^{m-1} \{ t (2m-3-t)\}^{2m-7} (2m-3-2t)^2 \, dt < 0. 
\end{align*} 

We first derive explicit expressions for each integrals. 
\begin{align*}
I_1 & = m^{2m-6} \int_0^{m-4} t^{2m-7} (m+t)^2 (m-t)^2 (2t-2m+7) \, dt \\
& = m^{2m-6} (m-4)^{2m-6} \\
& \quad \times \frac{-5888 m^7 + 82112 m^6 - 491008 m^5 + 1637752 m^4 - 3300288 m^3 + 4031616 m^2 - 2774016 m + 829440
}{(2m-1)(2m-2)(2m-3)(2m-4)(2m-5)(2m-6)} \\ 
& =: m^{2m-6} (m-4)^{2m-8} \cdot (m-4)^2 r(m). 
\end{align*} 
Since $2t-2m+7$ is negative for $0< t <m-4$, we know $I_1 < 0$ so that $r(m)<0$. 
By a well-known inequality $\left(1 - \displaystyle \frac{1}{n}\right)^n < e^{-1} < \left(1 - \displaystyle \frac{1}{n}\right)^{n-1}$ for all $n>1$, we obtain 
$$m^{2m-6} (m-4)^{2m-8} = m^{4m-14} \left(1 - \frac{4}{m}\right)^{2m-8} > m^{4m-14} (e^{-1})^{\frac{4}{m-4}(2m-8)} = m^{4m-14} e^{-8}.$$ 
Hence, $I_1 < m^{4m-14} e^{-8} (m-4)^2 r(m)$. 
For the second integral, we note that the term $2m-2t-1$ is negative for $m-\frac{1}{2} < t <m$, which implies that we have to separate two cases for the sake of estimate.  
\begin{align*}
I_2 & = (3m-4) \int_{m-1}^{m} \{ t (3m-4-2t)\}^{2m-7} (3m-4-t)^2 (3m-4-3t)^2 (2m-2t-1) \, dt \\
& < (3m-4) \Big[ \{ (m-1)(m-2) \}^{2m-7} \int_{m-1}^{m-\frac{1}{2}} (3m-4-t)^2 (3m-4-3t)^2 (2m-2t-1) \, dt \\ 
& \quad + \{ m(m-4) \}^{2m-7} \int_{m-\frac{1}{2}}^{m} (3m-4-t)^2 (3m-4-3t)^2 (2m-2t-1) \, dt \Big] \\
& = (3m-4) \Big [ \{ (m-1)(m-2) \}^{2m-7} \left (\frac{19}{8} m^2 - \frac{1837}{240}m + \frac{5929}{960} \right ) + \{ m(m-4) \}^{2m-7} \left (-\frac{99}{8} m^2 + \frac{11453}{240}m - \frac{44201}{960} \right ) \Big ]
\end{align*} 
Using inequalities 
\begin{align*}
\{ (m-1)(m-2) \}^{2m-7} 
& = m^{4m-14} \left(1 - \displaystyle \frac{1}{m}\right)^{2m-7} \left(1 - \displaystyle \frac{2}{m}\right)^{2m-7} \\
& < m^{4m-14} (e^{-1})^{2-\frac{7}{m}} (e^{-1})^{4-\frac{14}{m}} 
\leq m^{4m-14} e^{-6+\frac{1}{2}}
\end{align*} 
and 
$\{ m(m-4) \}^{2m-7} = m^{4m-14} \left(1 - \displaystyle \frac{4}{m}\right)^{2m-7} > m^{4m-14} (e^{-1})^{8+\frac{4}{m-4}} \geq m^{4m-14} e^{-8-\frac{2}{19}}$ for $m \geq 42$, 
we have 
$$I_2 < m^{4m-14} (3m-4)  \left \{ e^{-6+\frac{1}{2}} \left (\frac{19}{8} m^2 - \frac{1837}{240}m + \frac{5929}{960} \right ) + e^{-8-\frac{2}{19}} \left (-\frac{99}{8} m^2 + \frac{11453}{240}m - \frac{44201}{960} \right ) \right \}.$$ 
For the third term, we use the same method:  
\begin{align*}
I_3 & = (2m-3)^3 \int_{m-\frac{3}{2}}^{m-1} \{ t (2m-3-t)\}^{2m-7} (2m-3-2t)^2 \, dt \\
& < (2m-3)^3 \left(m - \frac{3}{2}\right)^{4m-14} \int_{m-\frac{3}{2}}^{m-1} (2m-3-2t)^2 \, dt = \frac{1}{6} (2m-3)^3 \left(m - \frac{3}{2}\right)^{4m-14} \\
& < \frac{1}{6} (2m-3)^3 \cdot m^{4m-14} e^{\frac{21}{m}-6} \leq \frac{1}{6} (2m-3)^3 m^{4m-14} e^{-6+\frac{1}{2}} 
\end{align*} 
for $m \geq 42$. 
Combining the calculations, we get the inequality 
\begin{align*}
I_1 + I_2 + I_3 & < m^{4m-14} e^{-8} \Big[ (m-4)^2 r(m) \\
& + (3m-4)  \left \{ e^{\frac{5}{2}} \left (\frac{19}{8} m^2 - \frac{1837}{240}m + \frac{5929}{960} \right ) + e^{-\frac{2}{19}} \left (-\frac{99}{8} m^2 + \frac{11453}{240}m - \frac{44201}{960} \right ) \right \} + \frac{e^{\frac{5}{2}}}{6} (2m-3)^3 \Big ]. 
\end{align*} 
Consequently, it suffices to show that a polynomial 
\begin{align*}
R(m) & := (m-4)^2 (-5888 m^7 + 82112 m^6 - 491008 m^5 + 1637752 m^4 - 3300288 m^3 + 4031616 m^2 - 2774016 m + 829440) \\
& + (2m-1)(2m-2)(2m-3)(2m-4)(2m-5)(2m-6) \\
& \times \Big [ (3m-4)  \left \{ e^{\frac{5}{2}} \left (\frac{19}{8} m^2 - \frac{1837}{240}m + \frac{5929}{960} \right ) + e^{-\frac{2}{19}} \left (-\frac{99}{8} m^2 + \frac{11453}{240}m - \frac{44201}{960} \right ) \right \} + \frac{e^\frac{5}{2}}{6} (2m-3)^3 \Big ]
\end{align*}  
of degree 9 is always negative for $m \geq 42$. 
We can check that the polynomial $R(m)$ is decreasing on $[42, \infty)$, and $R(42) < 0$. 
Therefore, we conclude the result. 
\end{proof}

\begin{theorem}
\label{Blowup-eq}
Let $X_m$ be the wonderful compactification of symmetric homogeneous space of type $\textup{AIII}(2, m)$ for $m \geq 5$. 
The blow-up $B\ell_Z(X_m)$ of $X_m$ along the closed orbit $Z$ is $\SL_m(\mathbb C)$-equivariantly uniformly K-stable if $m=5$ and K-unstable if $m \geq 6$.  
\end{theorem}

\begin{proof}
For $\textbf{bar}_{DH}(\Delta_m^{B\ell}) = (\bar{x}, \bar{y})$, if $m\geq 6$ then $\textbf{bar}_{DH}(\Delta_m^{B\ell})$ is not in the relative interior of the translated cone $2 \rho_{\theta} + \mathcal C^+_{\theta}$ as $\bar{x} + \bar{y} < 2m-4$ by Proposition~\ref{barycenter-blowup}. 
Thus, we get the result from Proposition~\ref{criterion} and Example~\ref{blowup-56}.  
\end{proof}

\subsection{Greatest Ricci lower bounds of the blow-ups $B\ell_Z(X_m)$} 
From Corollary~\ref{formula for greatest Ricci lower bounds}, we can compute the greatest Ricci lower bound of the blow-up of $X_m$ along the closed orbit for $m \geq 6$. 

\begin{example}
\label{GRLB_6}
We already know that the blow-up $B\ell_Z(X_6)$ of $X_6$ along the closed orbit $Z$ cannot admit any K\"{a}hler--Einstein metric. 
Let $Q$ be the point at which the half-line starting from the barycenter $C=\textbf{bar}_{DH}(\Delta_6^{B\ell})$ in the direction of $A=(5, 3)$ intersects the boundary of $(\Delta_6^{B\ell})^{\text{tor}} - \mathcal C^+_{\theta}$. 
Considering the line $x+y=9$ giving a part of $\partial((\Delta_6^{B\ell})^{\text{tor}} - \mathcal C^+_{\theta})$ and the half-line $\overrightarrow{CA}$, 
from the results in Example~\ref{blowup-56} we can compute 
$$Q = \left(-\frac{327603995647340905}{127205190161460224}, \frac{1472450707100482921}{127205190161460224} \right) \approx (-2.575, 11.575).$$ 
Thus, we obtain the greatest Ricci lower bound $R(B\ell_Z(X_6)) = \displaystyle \frac{\overline{AQ}}{\overline{CQ}} \approx 0.978$ 
by Corollary~\ref{formula for greatest Ricci lower bounds}.
\end{example}

\vskip 3em


\providecommand{\bysame}{\leavevmode\hbox to3em{\hrulefill}\thinspace}
\providecommand{\MR}{\relax\ifhmode\unskip\space\fi MR }
\providecommand{\MRhref}[2]{%
  \href{http://www.ams.org/mathscinet-getitem?mr=#1}{#2}
}
\providecommand{\href}[2]{#2}


\begin{thebibliography}{CDS15b}


\bibitem[AZ22]{AZ} Hamid Abban and Ziquan Zhuang, \emph{K-stability of Fano varieties via admissible flags}, Forum Math. Pi \textbf{10} (2022), Art. No. e15.

\bibitem[ACCFKMGSSV]{ACCFKMGSSV}
Carolina Araujo, Ana-Maria Castravet, Ivan Cheltsov, Kento Fujita, Anne-Sophie Kaloghiros, Jesus Martinez-Garcia, Constantin Shramov, Hendrik S\"u{\ss}, and Nivedita Viswanathan, \emph{The Calabi problem for Fano threefolds}, London Math. Soc. Lect. Note Ser., vol.~485, Cambridge Univ. Press, Cambridge, 2023. 

\bibitem[Aub78]{aubin78}
Thierry Aubin, \emph{{\'E}quations du type Monge--Amp{\`e}re sur les vari{\'e}t{\'e}s k{\"a}hl{\'e}riennes compactes}, Bull. Sci. Math.(2) \textbf{102} (1978), 63--95.

\bibitem[BBJ21]{BBJ21}
Robert Berman, S\'{e}bastien Boucksom, and Mattias Jonsson, \emph{A variational approach to the Yau--Tian--Donaldson conjecture}, J. Amer. Math. Soc. \textbf{34} (2021), no.~3, 605--652. 

\bibitem[BP16]{BP16}
Paolo Bravi and Guido Pezzini, \emph{Primitive wonderful varieties}, Math. Z. \textbf{282} (2016), no.~3-4, 1067--1096.

\bibitem[Bri87]{Brion87}
Michel Brion, \emph{Sur l'image de l'application moment}, S\'{e}minaire d'alg\`ebre {P}aul {D}ubreil et {M}arie-{P}aule {M}alliavin ({P}aris, 1986), Lecture Notes in Math., vol. 1296, Springer, Berlin, 1987, pp.~177--192.

\bibitem[Bri89]{Brion89}
\bysame, \emph{Groupe de {P}icard et nombres caract\'{e}ristiques des vari\'{e}t\'{e}s sph\'{e}riques}, Duke Math. J. \textbf{58} (1989), no.~2, 397--424.

\bibitem[Bri97]{Brion97}
\bysame, \emph{Curves and divisors in spherical varieties}, Algebraic groups and {L}ie groups, Austral. Math. Soc. Lect. Ser., vol.~9, Cambridge Univ. Press, Cambridge, 1997, pp.~21--34.

\bibitem[CRZ19]{CRZ19}
Ivan Cheltsov, Yanir Rubinstein, and Kewei Zhang, \emph{Basis log canonical thresholds, local intersection estimates, and asymptotically log del Pezzo surfaces}, Selecta Math. \textbf{25} (2019), Article number 34. 

\bibitem[CDS15a]{cds1}
Xiuxiong Chen, Simon Donaldson, and Song Sun, \emph{K{\"a}hler--Einstein metrics on Fano manifolds. I: Approximation of metrics with cone singularities}, J. Amer. Math. Soc. \textbf{28} (2015), no.~1, 183--197.

\bibitem[CDS15b]{cds2}
\bysame, \emph{K{\"a}hler--Einstein metrics on Fano manifolds. II: Limits with cone angle less than $2\pi$}, J. Amer. Math. Soc. \textbf{28} (2015), no.~1, 199--234.

\bibitem[CDS15c]{cds3}
\bysame, \emph{K{\"a}hler--Einstein metrics on Fano manifolds. III: Limits as cone angle approaches $2\pi$ and completion of the main proof}, J. Amer. Math. Soc. \textbf{28} (2015), no.~1, 235--278.

\bibitem[DCP83]{dCP83}
Corrado De Concini and Claudio Procesi, \emph{Complete symmetric varieties}, Invariant theory ({M}ontecatini, 1982), Lecture Notes in Math., Vol. 996, Springer, Berlin, 1983, pp.~1--44.

\bibitem[Del17]{Del17}
Thibaut Delcroix, \emph{K\"{a}hler--{E}instein metrics on group compactifications}, Geom. Funct. Anal. \textbf{27} (2017), no.~1, 78--129.

\bibitem[Del20a]{Del20}
\bysame, \emph{K-stability of {F}ano spherical varieties}, Ann. Sci. \'{E}c. Norm. Sup\'{e}r. (4) \textbf{53} (2020), no.~3, 615--662.

\bibitem[Del20b]{Del20horosymmetric}
\bysame, \emph{K\"{a}hler geometry of horosymmetric varieties, and application to Mabuchi's K-energy functional}, J. Reine Angew. Math. \textbf{763} (2020), 129--199.

\bibitem[Del22]{Del22}
\bysame, \emph{Examples of K-unstable Fano manifolds}, Ann. Inst. Fourier \textbf{72} (2022), no.~5, 2079--2108.

\bibitem[DH21]{DH21}
Thibaut Delcroix and Jakob Hultgren, \emph{Coupled complex Monge--Amp{\`e}re equations on Fano horosymmetric manifolds}, J. Math. Pures Appl. \textbf{153} (2021), 281--315. 

\bibitem[Don02]{donaldson02}
Simon~K. Donaldson, \emph{Scalar curvature and stability of toric varieties}, J. Differential Geom. \textbf{62} (2002), no.~2, 289--349.

\bibitem[FO18]{FO18}
Kento Fujita and Yuji Odaka, \emph{On the K-stability of Fano varieties and anticanonical divisors}, Tohoku Math. J. \textbf{70} (2018), no.~4, 511--521. 

\bibitem[Gan18]{Gandini18}
Jacopo Gandini, \emph{Embeddings of spherical homogeneous spaces}, Acta Math. Sin. (Engl. Ser.) \textbf{34} (2018), no.~3, 299--340.

\bibitem[GH15]{GH15}
Giuliano Gagliardi and Johannes Hofscheier, \emph{Gorenstein spherical {F}ano varieties}, Geom. Dedicata \textbf{178} (2015), 111--133.

\bibitem[Gol20]{Golota20}
Aleksei Golota, \emph{Delta-invariants for Fano varieties with large automorphism groups}, Internat. J. Math. \textbf{31} (2020), no.~10, 2050077. 

\bibitem[Kno91]{Knop91}
Friedrich Knop, \emph{The {L}una--{V}ust theory of spherical embeddings}, Proceedings of the {H}yderabad {C}onference on {A}lgebraic {G}roups ({H}yderabad, 1989), Manoj Prakashan, Madras, 1991, pp.~225--249.

\bibitem[LPY21]{LPY21}
Jae-Hyouk Lee, Kyeong-Dong Park, and Sungmin Yoo, \emph{K\"{a}hler--{E}instein metrics on smooth Fano symmetric varieties with Picard number one}, Mathematics \textbf{9} (2021), no. 1, 102. 

\bibitem[Li11]{Li11}
Chi Li, \emph{Greatest lower bounds on {R}icci curvature for toric {F}ano manifolds}, Adv. Math. \textbf{226} (2011), no.~6, 4921--4932.

\bibitem[LL22]{LL} 
Yan Li and Zhenye Li, \emph{Finiteness of $\mathbb{Q}$-Fano compactifications of semisimple groups with K\"ahler--Einstein metrics}, Int. Math. Res. Not. IMRN \textbf{2022}, no.~15, 11776--11795.

\bibitem[Lun96]{Luna96}
Domingo Luna, \emph{Toute vari\'{e}t\'{e} magnifique est sph\'{e}rique}, Transform. Groups, \textbf{1} (1996), no.~3, 249--258.

\bibitem[Lun97]{Luna97}
\bysame, \emph{Grosses cellules pour les vari\'{e}t\'{e}s sph\'{e}riques}, Algebraic groups and {L}ie groups, Austral. Math. Soc. Lect. Ser., vol.~9, Cambridge Univ. Press, Cambridge, 1997, pp.~267--280.

\bibitem[Lun01]{Luna01}
\bysame, \emph{Vari\'{e}t\'{e}s sph\'{e}riques de type A}, Inst. Hautes 'Etudes Sci. Publ. Math., \textbf{94} (2001), 161--226.

\bibitem[LV83]{LV83}
Domingo Luna and Thierry Vust, \emph{Plongements d'espaces homog\`enes}, Comment. Math. Helv. \textbf{58} (1983), no.~2, 186--245.

\bibitem[Mab87]{mabuchi}
Toshiki Mabuchi, \emph{Einstein--K{\"a}hler forms, Futaki invariants and convex geometry on toric Fano varieties}, Osaka J. Math. \textbf{24} (1987), no.~4, 705--737.

\bibitem[Mat72]{mat2}
Yoz{\^o} Matsushima,  \emph{Remarks on {K}\"{a}hler--{E}instein manifolds}, Nagoya Math. J. \textbf{46} (1972), 161--173.

\bibitem[Pas09]{Pasquier} 
Boris Pasquier, \emph{On some smooth projective two-orbit varieties of Picard number 1}, Math. Ann. \textbf{344} (2009), no.~4, 963--987.

\bibitem[Pez18]{Pezzini18}
Guido Pezzini, \emph{Lectures on wonderful varieties}, Acta Math. Sin. (Engl. Ser.) \textbf{34} (2018), no.~3, 417--438.

\bibitem[Rub08]{Rubinstein08}
Yanir A. Rubinstein, \emph{Some discretizations of geometric evolution equations and the Ricci iteration on the space of K\"{a}hler metrics}, Adv. Math. {\bf 218} (2008), no.~5, 1526--1565.

\bibitem[Rub09]{Rubinstein09}
\bysame, \emph{On the construction of Nadel multiplier ideal sheaves and the limiting behavior of the Ricci flow}, Trans. Amer. Math. Soc. {\bf 361} (2009), no.~11, 5839--5850.

\bibitem[Ruz12]{Ruzzi2012}
Alessandro Ruzzi, \emph{Fano symmetric varieties with low rank}, Publ. Res. Inst. Math. Sci. \textbf{48} (2012), no.~2, 235--278.

\bibitem[SW16]{SW16}
Jian Song and Xiaowei Wang, \emph{The greatest Ricci lower bound, conical Einstein metrics and Chern number inequality}, Geom. Topol. {\bf 20} (2016), no.~1, 49--102.

\bibitem[Ste68]{Steinberg68}
Robert Steinberg, \emph{Endomorphisms of linear algebraic groups}, Memoirs of the American Mathematical Society, No. 80, American Mathematical Society, Providence, RI, 1968. 

\bibitem[Sz\'{e}11]{Sze11}
G\'{a}bor Sz\'{e}kelyhidi, \emph{Greatest lower bounds on the {R}icci curvature of {F}ano manifolds}, Compos. Math. \textbf{147}, no.~1, 319--331.

\bibitem[Tia90]{TY90} 
Gang Tian, \emph{On Calabi’s conjecture for complex surfaces with positive first Chern class}, Invent. Math. \textbf{101} (1990), 101--172.

\bibitem[Tia92]{tian92} 
\bysame, \emph{On stability of the tangent bundles of Fano varieties}, Internat. J. Math. {\bf 3} (1992), no.~3, 401--413. 

\bibitem[Tia94]{tian94}
\bysame, \emph{The {$K$}-energy on hypersurfaces and stability}, Comm. Anal. Geom. \textbf{2} (1994), no.~2, 239--265.

\bibitem[Tia97]{tian97}
\bysame, \emph{K\"{a}hler--{E}instein metrics with positive scalar curvature}, Invent. Math. \textbf{130} (1997), no.~1, 1--37.

\bibitem[Tia15]{tian15}
\bysame, \emph{K-stability and K\"{a}hler--{E}instein metrics}, Commun. Pure Appl. Math. \textbf{68} (2015), no.~7, 1085--1156. 

\bibitem[TY87]{TY87} 
Gang Tian and Shing-Tung Yau, \emph{K\"ahler–-Einstein metrics on complex surfaces with $C_1 > 0$}, Commun. Math. Phys. \textbf{112}, (1987), 175--203.

\bibitem[Tim11]{Timashev11}
Dmitry~A. Timashev, \emph{Homogeneous spaces and equivariant embeddings}, Encyclopaedia of Mathematical Sciences, vol. 138, Springer, Heidelberg, 2011, Invariant Theory and Algebraic Transformation Groups, 8.

\bibitem[Vus74]{Vust74}
Thierry Vust, \emph{Op\'{e}ration de groupes r\'{e}ductifs dans un type de c\^{o}nes presque homog\`enes}, Bull. Soc. Math. France \textbf{102} (1974),  317--333.

\bibitem[Vus90]{Vust90}
\bysame, \emph{Plongements d'espaces sym\'{e}triques alg\'{e}briques: une classification}, Ann. Scuola Norm. Sup. Pisa Cl. Sci. (4) \textbf{17} (1990),  no.~2, 165--195.

\bibitem[WZ04]{wz}
Xu-Jia Wang and Xiaohua Zhu, \emph{K{\"a}hler--Ricci solitons on toric manifolds with positive first Chern class}, Adv. Math. \textbf{188} (2004), no.~1, 87--103.

\bibitem[Yau78]{yau78}
Shing-Tung Yau, \emph{On the Ricci curvature of a compact K{\"a}hler manifold and the complex Monge--Amp{\`e}re equation, I}, Comm. Pure Appl. Math. \textbf{31} (1978), no.~3, 339--411.

\bibitem[Zhu21]{Zhuang21}
Ziquan Zhuang, \emph{Optimal destabilizing centers and equivariant K-stability}, Invent. Math. \textbf{226} (2021), no.~1, 195--223. 

\end{thebibliography}
\end{document}